\newcommand{\om}{\omega}
\newcommand{\si}{\sigma}
\newcommand{\ba}{\mathcal{G}}
\newcommand{\fg}{\frak g}
\newcommand{\fp}{\frak p}
\newcommand{\fl}{\frak l}
\newcommand{\fh}{\frak h}
\newcommand{\Ad}{{\rm Ad}}
\newcommand{\id}{{\rm id}}
\newcommand{\U}{\Upsilon}
\newcommand{\Rho}{{\mbox{\sf P}}}
\newcommand{\ad}{{\rm ad}}
\newtheorem{thm}{Theorem}[section]
\newtheorem{prop}[thm]{Proposition}
\newtheorem{lem}[thm]{Lemma}
\newtheorem{cor}[thm]{Corollary}
\theoremstyle{definition}
\newtheorem{def*}[thm]{Definition}
\begin{document}
\title{Almost conformally almost Fedosov structures}
\author{Jan Gregorovi\v c}
\address{Department of Mathematics and Statistics, Faculty of Sciences \\
Masaryk University \\
Kotl\' a\v rsk\' a 2 \\ 
Brno, 611 37 \\ 
Czech Republic and Faculty of Mathematics, University of Vienna, Oskar Morgenstern Platz 1, 1090 Wien, Austria}
\email{jan.gregorovic@seznam.cz}
\keywords{almost conformally symplectic; conformally Fedosov; Cartan geometry modeled on a skeleton; Bernstein-Gelfand-Gelfand sequences and complexes}
\subjclass[2010]{53A40, 53B15, 53D15, 58J10; 53A30, 53D05, 58A10}
\thanks{The author was partially supported by the project P29468 of the Austrian Science Fund (FWF). The author would like to thank L. Zalabova for reading and commenting the earlier versions of the article.}

\begin{abstract} 
We study the relations between the projective and the almost conformally symplectic structures on a smooth even dimensional manifold. We describe these relations by a single almost conformally symplectic connection with totally trace--free torsion sharing the geodesics (up to parametrization) with the projective class. This connection generalizes a (conformally) Fedosov structure depending on the remaining torsion of this distinguished connection. In fact, we see these structures as the almost symplectic analogy of the conformal Riemannian structures, because there is an analogy of the class of Weyl connections on the conformal Riemannian structure. Moreover, such a class encodes the variability of the connections in a projective class. The distinguished connection trivializes such a class of Weyl connection. There is a description these geometric structures as Cartan geometries that generalizes the description of projective and conformal Riemannian structures as parabolic geometries. This makes possible to construct an analogy of the so--called Bernstein-Gelfand-Gelfand sequences and Bernstein-Gelfand-Gelfand complexes.
\end{abstract}
\maketitle
\tableofcontents

\section{Introduction}

Let $M$ be a smooth connected manifold of even dimension $n\geq 6$. \emph{An almost conformally symplectic structure (shortly ACS--structure)} on $M$ is a smooth line subbundle $\ell\subset \wedge^2 T^*M$ such that for each $x\in M$, each non--zero element of $\ell_x$ is a non--degenerate bilinear form on $T_xM$. The ACS--structures are almost symplectic analogies of the well--known conformal Riemannian structures, which are the smooth line subbundles of $S^2 T^*M$ satisfying the same non--degeneracy condition. However, there is a large difference between the conformal Riemannian structures and the ACS--structures. The conformal Riemannian structures are geometric structures of second order, while the ACS--structures are geometric structures of infinite order, cf. \cite{Kob}. Moreover, the torsion--free connections preserving the conformal Riemannian structure form the class of so--called Weyl connections depending on a one--form, cf. \cite[Section 1.6]{parabook}, while the connections preserving the ACS--structure (shortly \emph{ACS--connections}) with the same (in general non--zero) torsion form a class depending on a symmetric trilinear form. The class of Weyl connections on the conformal structure defines a second order geometric structure (of first order) equivalent to the conformal structure.

The aim of this article is to introduce the almost conformally almost Fedosov structures as the second order geometric structures on manifolds with an ACS--structure that are analogies of the conformal Riemannian structures. However, it is more natural to define (see Definition \ref{def1.2}) these structures as the second order geometric structures on manifolds with an ACS--structure induced by a projective structure.

In this article, we use the Penrose's abstract index notation and Einstein summation conventions and we describe the ACS--structures using an analogy of the description of conformal Riemannian structures in \cite[Section 1.6]{parabook}. 

We start by fixing a symplectic form $J$ on $\mathbb{R}^{n}$ of the form $$((x_1,\dots,x_{n}),(y_1,\dots,y_{n}))\mapsto \sum_{i=1}^{\frac{n}{2}}x_iy_{\frac{n}{2}+i}-y_ix_{\frac{n}{2}+i}$$
and we denote by $CSp(n,\mathbb{R})$ the group of linear transformations preserving $J$ up to a multiple. The ACS--structure $(M,\ell)$ is equivalent to a $CSp(n,\mathbb{R})$--structure $\ba_0$, where the subbundle $\ba_0$ of the bundle of the frames of $TM$ consists of all frames in which the coordinates of sections of $\ell$ are of the form $fJ$ for some function $f$ on $M$ (depending on the frame). 

We view $J$ as a constant section $J_{ab}$ of a trivial line bundle $\ba_0\times \mathbb{R}$ and consider line bundles $\ba_0\times_{CSp(n,\mathbb{R})} \mathbb{R}[w]$, where $\mathbb{R}[w]$ is the $CSp(n,\mathbb{R})$--representation $A\mapsto (det(A))^{-\frac{w}{n}}$, as analogies of conformal densities. This 
identifies the sections of $\ell$ with the sections of  $\ba_0\times_{CSp(n,\mathbb{R})} \mathbb{R}[2]$ given by the above functions $f$.  Similarly, $J^{-1}$ defines a constant section $J^{bc}$ of $\ba_0\times \mathbb{R}$ and identifies sections of $\ell^*$ with sections of $\ba_0\times_{CSp(n,\mathbb{R})} \mathbb{R}[-2]$.

Let $U$ be a representation of $CSp(n,\mathbb{R})$. We will always represent the geometric objects on $M$ that are smooth sections of the bundles $\mathcal{U}:=\ba_0\times_{CSp(n,\mathbb{R})}U$ by the corresponding $CSp(n,\mathbb{R})$--equivariant functions from $\ba_0\to U$. For example, we represent the vector fields $\xi^a$ on $M$ by the $CSp(n,\mathbb{R})$--equivariant function $\ba\to \mathbb{R}^n$. Further, we view the linear connections $\nabla$ as operators mapping the sections of $\mathcal{U}$ onto the sections of $T^*M\otimes \mathcal{U}$ and for example, we write $\nabla_a\nu^d$ when $U=\mathbb{R}^n$. If we write $\mathcal{U}[w]$ for the associated bundle for the $CSp(n,\mathbb{R})$--representation $U\otimes \mathbb{R}[w]$, then for section $\xi^a$ of $TM$, $$\xi_b:=\xi^aJ_{ab}$$ is a section of $T^*M[-2]$ and, for section $\U_a$ of $T^*M$,  $$\U^d:=J^{db}\U_b$$ is a section of $TM[2]$. Analogously, we will use $J_{ab}$ and $J^{bc}$ to lower and rise the indices of general tensors. Let us emphasize that this always changes the conformal density and that we need to take in account the ordering of indices, because $$\delta^d{}_a=J^{db}\delta_b{}^cJ_{ca}=J_{ba}J^{db}=J_{ab}J^{bd}=-\delta_a{}^d,$$
where $\delta_i{}^j$ denotes the Kronecker delta.

Let us recall that \emph{a projective structure on $M$} is a class $[D]$ of torsion--free connections sharing the same geodesics (up to parametrization), cf. \cite[Section 4.1.5]{parabook}. For each $D,\bar D\in [D]$, there is a one--form $\U_a$ such that $$\bar D_a\nu^d=D_a\nu^d+(\U_a\delta_b{}^d+\U_b\delta_a{}^d)\nu^b.$$

We prove in Section \ref{sec3} that a connection $\nabla$ is an ACS--connection if and only if $\nabla_aJ_{bc}=0$. Clearly, the connections from the projective structure $[D]$ does not have to be ACS--connections. On the other hand, we can conclude from \cite[Theorem 1.1]{GRSFm} that for each torsion--free connection, there is an ACS--connection sharing the same geodesics (including the parametrization). Building on this result, we prove the following Theorem in Section \ref{sec3}.

\begin{thm}\label{thm1}
Two ACS--connections $\nabla$ and $\bar \nabla$ share the same geodesics (including the parametrization) if and only if there is a one--form $s_a$ such that 
$$\bar \nabla_a\nu^d:=\nabla_a\nu^d+(s_a \delta_{b}{}^d-s_b \delta_{a}{}^d-J_{ab}s^d)\nu^b.$$

Two ACS--connections $\nabla$ and $\bar \nabla$ share the same geodesics (up to parametrization) if and only if there are one--forms $s_a$ and $\beta_a$ such that 
$$\bar \nabla_a\nu^d:=\nabla_a\nu^d+((\beta_a+s_a) \delta_{b}{}^d+(\beta_b-s_b) \delta_{a}{}^d+J_{ab}(\beta^d-s^d))\nu^b.$$

Among the ACS--connections that share the same geodesics (up to parametrization) with a given projective class, there is a unique ACS--connection $\nabla^0$ with totally trace--free torsion.
\end{thm}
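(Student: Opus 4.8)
The plan is to reduce all three assertions to linear algebra on the difference tensor of two connections. Writing $\bar\nabla_a\nu^d=\nabla_a\nu^d+\Gamma_{ab}{}^d\nu^b$, I regard $\Gamma_{ab}{}^d$ as a section of $T^*M\otimes\mathrm{End}(TM)$ and reformulate each hypothesis as a pointwise linear condition on it. I record two standard translations. The connections share their geodesics with the same affine parametrization exactly when the symmetric part $\Gamma_{(ab)}{}^d$ vanishes, i.e.\ $\Gamma$ is skew in its first two indices; they share geodesics only up to reparametrization exactly when $\Gamma_{(ab)}{}^d=\phi_a\delta_b{}^d+\phi_b\delta_a{}^d$ for some one--form $\phi_a$, which is the symmetrization of the projective change recalled in the introduction. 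Next, since a connection is an ACS--connection iff it annihilates $J_{bc}$ (Section \ref{sec3}) and both $\nabla,\bar\nabla$ do, the difference lies in $T^*M\otimes\mathfrak{csp}(n,\R)$; concretely, lowering the last index by $\Gamma_{abc}:=\Gamma_{ab}{}^dJ_{dc}$, this says $\Gamma_{abc}=S_{abc}+\gamma_aJ_{bc}$ with $S_{abc}$ symmetric in $b,c$ and $\gamma_a$ a one--form.

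For the first equivalence, the direction ``$\Leftarrow$'' is the direct check that the displayed $\Gamma$ is skew in $a,b$ and annihilates $J_{bc}$; lowering it gives $\Gamma_{abc}=s_aJ_{bc}-s_bJ_{ac}-s_cJ_{ab}$. For ``$\Rightarrow$'' I impose both conditions at once: antisymmetrizing the $ab$ pair in $\Gamma_{abc}=S_{abc}+\gamma_aJ_{bc}$ produces $S_{abc}+S_{bac}=-(\gamma_aJ_{bc}+\gamma_bJ_{ac})$. A $3$--tensor that is skew in its first two indices and symmetric in its last two must vanish, so the homogeneous system has only the zero solution, and the particular solution $S_{abc}=-\gamma_bJ_{ac}-\gamma_cJ_{ab}$ forces $\Gamma_{abc}=\gamma_aJ_{bc}-\gamma_bJ_{ac}-\gamma_cJ_{ab}$. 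Setting $s_a:=\gamma_a$ (equivalently $s_a=\tfrac1n\Gamma_{ab}{}^b$) this is exactly the lowered form of the asserted expression, so the two connections coincide.

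The second equivalence follows formally: the explicit family $\beta_a\delta_b{}^d+\beta_b\delta_a{}^d+J_{ab}\beta^d$ annihilates $J_{bc}$ (a one--line check) and has symmetric part $\beta_a\delta_b{}^d+\beta_b\delta_a{}^d$, so subtracting it (with $\beta=\phi$) from a general ACS tensor with projective symmetric part $\phi_a\delta_b{}^d+\phi_b\delta_a{}^d$ leaves an ACS tensor with vanishing symmetric part, to which the first equivalence applies; recombining yields the two--parameter formula. For the last assertion I first secure one ACS--connection compatible with $[D]$: picking $D\in[D]$, \cite[Theorem 1.1]{GRSFm} supplies an ACS--connection $\nabla$ with the same parametrized geodesics as $D$, hence sharing unparametrized geodesics with all of $[D]$. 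By the second equivalence every competitor is $\nabla+\Gamma$ for the two--parameter family, so I must choose $(s_a,\beta_a)$ making the torsion of $\nabla+\Gamma$ totally trace--free. The torsion changes by $2\Gamma_{[ab]}{}^d=2(s_a\delta_b{}^d-s_b\delta_a{}^d+J_{ab}(\beta^d-s^d))$, and being totally trace--free means the vanishing of the two independent $CSp$--traces $T_{ab}{}^b$ and $J^{ab}T_{ab}{}^d$ of an element of $\wedge^2T^*M\otimes TM$; computing these on $2\Gamma_{[ab]}{}^d$ gives $2(ns_a-\beta_a)$ and $2(n\beta^d-(n+2)s^d)$. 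Matching them to minus the corresponding traces of the torsion of $\nabla$ is a linear system in $(s_a,\beta_a)$ whose coefficient matrix has determinant $(n-2)(n+1)$, nonzero for $n\ge6$, whence a unique solution and a unique $\nabla^0$.

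I expect the main obstacle to be bookkeeping rather than conceptual: keeping the antisymmetric raising/lowering conventions consistent, so that the sign in $\delta^d{}_a=-\delta_a{}^d$ propagates correctly through every contraction (such as $J_{ab}s^b=-s_a$ and $J^{ab}J_{ab}=n$) that feeds the trace computation; and verifying that the two contractions above genuinely exhaust the traces of the torsion module, so that ``totally trace--free'' is captured by exactly these two equations and the dimension count $2n$ matches the freedom in the pair $(s_a,\beta_a)$.
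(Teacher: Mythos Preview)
Your argument is correct. The paper takes a somewhat different route: rather than analyzing the difference tensor of two ACS--connections directly, it derives Theorem~\ref{thm1} as a corollary of the more detailed Proposition~\ref{propconconst}, which starts from the projective class $[D]$, decomposes $D_aJ_{bc}$ into $CSp(n,\mathbb{R})$--irreducible pieces, and then explicitly constructs the entire family $\nabla^{\beta,s}$ via an auxiliary tensor $(F^{\beta,s})_{ab}{}^d$, verifying by direct computation that each $\nabla^{\beta,s}$ annihilates $J_{bc}$ and writing down its torsion. Exhaustiveness of the family is obtained from the module isomorphism $(\mathbb{R}^n)^*\otimes\mathfrak{sp}(n,\mathbb{R})\cong S^2(\mathbb{R}^n)^*\otimes\mathbb{R}^n$ (which replaces your explicit ``skew in $ab$, symmetric in $bc$ implies zero'' step), and the uniqueness of $\nabla^{0,0}$ is read off from the explicit torsion formula rather than by inverting a $2\times 2$ system. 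Your approach is more elementary and self--contained for the theorem as stated; the paper's approach buys more, since it simultaneously identifies the totally trace--free torsion components $H_{ab}{}^d$ and $S_{ab}{}^d$ of $\nabla^0$ with irreducible pieces of $D_aJ_{bc}$, which is precisely what is needed for Lemma~\ref{lem3.1} and for the curvature computations in Appendix~\ref{ApB}.
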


In other words, for each projective and ACS--structure on $M$, there is a unique ACS--connection $\nabla^0$ with totally trace--free torsion that shares the geodesics (up to parametrization) with the projective class. This connection describes the relation between the projective and ACS--structure. Therefore we get a geometric structure that generalizes \emph{the Fedosov structures}, which are a symplectic structures with a torsion--free symplectic connection, see \cite{GRSFm}.

\begin{def*}\label{def1.2}
We say that the triple $(M,\ell,\nabla^0)$ consisting of an ACS--structure $\ell$ and an ACS--connection $\nabla^0$ with totally trace--free torsion is \emph{an almost conformally almost Fedosov structure (shortly ACAF--structure).} A morphism of ACAF--structures $(M,\ell,\nabla^0)$ and $(M',\ell',(\nabla')^0)$ is a diffeomorphism $f: M\to M'$ such that $f^*\ell'=\ell$ and $f^*(\nabla')^0=\nabla^0$ hold.
\end{def*}

We prove in Section \ref{sec3} (see Theorem \ref{3.4}) that the ACAF--structure is equivalent in the categorical sense to a triple $(M,\ell,[D])$, where $[D]$ is the projective structure given by the geodesics of $\nabla^0$. Therefore we say that the triples $(M,\ell,[D])$ are an ACAF--structures, too. 

Up to this point, we did not assume any relations between $\ell$ and $[D]$. In particular,  the torsion of the connection $\nabla^0$ naturally decomposes into two components that measure this relations. This is the reason, why the word almost appears twice in the Definition \ref{def1.2}. The vanishing of the torsion in these components is related with the following subcategories of the category of ACAF--structures, see Lemma \ref{lem3.1}.

\begin{def*}
We call an ACAF--structure $(M,\ell,[D])$
\begin{enumerate}
\item \emph{a conformally almost Fedosov structure (shortly CAF--structure)} if $\ell$ admits local non--vanishing sections that are closed (i.e. symplectic forms),

\item \emph{an almost conformally Fedosov structure (shortly ACF--structure)} if there is an ACS--connection that shares the same geodesics (up to parametrization) with $[D]$ and with torsion given by the structure torsion of the ACS--structure,

\item \emph{a conformally Fedosov structure (shortly CF--structure)} if it is at the same time a CAF--structure and an ACF--structure.
\end{enumerate}
\end{def*}

Clearly, a CF--structure defines a Fedosov structure if $\ell$ admits a global closed non--vanishing section. In \cite{ES}, the authors used different definition of CF--structures, however it follows from Theorem \ref{3.4} that their definition is equivalent to our definition of CF--structures.

Now, let us observe that there is a class of ACS--connections from Theorem \ref{thm1} that satisfies an analogous transformation rule as the class of Weyl connections on a conformal Riemannian structure, cf. \cite[Section 1.6.4]{parabook}. This is a second order geometric structure that is closely related to the projective structure.

\begin{cor}
Let $(M,\ell,\nabla^0)$ be an ACAF--structure corresponding to a projective class $[D]$ on $M$. If $[\nabla^\beta]$ is the class of ACS--connections given by
$$\nabla^\beta_a\nu^d:=\nabla^0_a\nu^d+(\beta_a\delta_{b}{}^d+\beta_b\delta_{a}{}^d+J_{ab}\beta^d)\nu^b$$
for all one--forms $\beta_a$, then there is a bijection between the class $[D]$ and $[\nabla^\beta]$ that assigns to connection in $[D]$ a connection in $[\nabla^\beta]$ that shares the same geodesics (including the parametrization).
\end{cor}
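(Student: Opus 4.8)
The plan is to construct the claimed bijection directly from the second and third parts of Theorem \ref{thm1} and then verify that the geodesic-preservation property makes it well-defined and bijective. First I would fix, for a given connection $D \in [D]$, the unique ACS--connection $\nabla_D$ sharing the same geodesics with $D$ \emph{including parametrization}; such a connection exists by the result quoted from \cite[Theorem 1.1]{GRSFm} (applied to the torsion-free $D$). The assignment $D \mapsto \nabla_D$ is the candidate bijection, and the content of the corollary is that its image is exactly the class $[\nabla^\beta]$.

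The key computation is to compare $\nabla_D$ with $\nabla^0$. By construction $\nabla^0$ shares geodesics with $[D]$ only up to parametrization, so by the second part of Theorem \ref{thm1} there are one-forms $s_a, \beta_a$ with
\[
\nabla_D{}_a\nu^d = \nabla^0_a\nu^d + \big((\beta_a+s_a)\delta_b{}^d + (\beta_b-s_b)\delta_a{}^d + J_{ab}(\beta^d-s^d)\big)\nu^b.
\]
Now I would use that $\nabla_D$ and $D$ share geodesics \emph{with} parametrization: passing from the projective representative $D$ to another $\bar D \in [D]$ changes the connection by the projective one-form $(\U_a\delta_b{}^d + \U_b\delta_a{}^d)\nu^b$, and the corresponding change in $\nabla_{\bar D}$ versus $\nabla_D$ must then be, by the first part of Theorem \ref{thm1}, of the pure form $(s'_a\delta_b{}^d - s'_b\delta_a{}^d - J_{ab}s'^d)\nu^b$ for some $s'_a$ — indeed, $\nabla_{\bar D}$ and $\nabla_D$ share geodesics with parametrization, since both are the canonical ACS--connection for projectively related but rescaled geodesics. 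Matching the change in the two formulas forces a specific relation, and one reads off that moving through all of $[D]$ moves $s_a$ through all one-forms while the "$\beta$-part" is pinned down by $\nabla^0$. Concretely, I expect the outcome to be that $\nabla_D$ always has the form $\nabla^0 + ((\beta_a+s_a)\delta_b{}^d + \ldots)$ with $s_a$ free and $\beta_a$ determined — but then a change of variables (absorbing the fixed $\beta$ into a renaming) identifies $\{\nabla_D : D\in[D]\}$ with $[\nabla^\beta]$ as displayed. Since the transformation one-form between any $D, \bar D \in [D]$ is uniquely determined, and likewise the one-form in the first part of Theorem \ref{thm1} is uniquely determined (injectivity of those parametrizations), the assignment $D \mapsto \nabla_D$ is injective; surjectivity onto $[\nabla^\beta]$ follows because every one-form arises.

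The main obstacle is the bookkeeping of the two one-form parameters: Theorem \ref{thm1} parametrizes the "up to parametrization" class by a \emph{pair} $(s_a,\beta_a)$, while the projective class $[D]$ is parametrized by a \emph{single} one-form $\U_a$, and the class $[\nabla^\beta]$ again by a single one-form $\beta_a$. The crux is to show that the natural map $[D] \to [\nabla^\beta]$ is a bijection of one-parameter families, i.e. that the $s_a$-direction (the "parametrization-preserving" deformations, which alter the affine parametrization of geodesics) is exactly the direction transverse to $[\nabla^\beta]$, so that each $\nabla^\beta$ is hit by exactly one $D$. This amounts to checking that the one-form appearing when one rescales a projective connection's geodesics maps isomorphically onto the $\beta$-parameter; I expect this to reduce, after the substitutions above, to a linear-algebra identity among the tensors $\delta_a{}^d$ and $J_{ab}$ already exploited in Theorem \ref{thm1}, together with the uniqueness clause of that theorem, so no genuinely new input is needed beyond careful tracking of signs and densities.
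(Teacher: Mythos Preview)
Your construction has a genuine gap at the very first step: the ACS--connection $\nabla_D$ sharing the parametrized geodesics of a given $D\in[D]$ is \emph{not} unique. The first part of Theorem~\ref{thm1} says precisely that two ACS--connections share parametrized geodesics if and only if they differ by the $s$--deformation $(s_a\delta_b{}^d - s_b\delta_a{}^d - J_{ab}s^d)$, so for each $D$ there is a whole one--form family of admissible $\nabla_D$'s. The cited result from \cite{GRSFm} gives existence, not uniqueness, and Proposition~\ref{propconconst} makes this explicit: for fixed $\beta$, all of the $\nabla^{\beta,s}$ share parametrized geodesics with $D^\beta$. So the assignment $D\mapsto\nabla_D$ is not a map until you specify how to kill the $s$--freedom.

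Your ``main obstacle'' paragraph shows you sensed this, but the description there is internally inconsistent: you call the $s$--direction ``parametrization-preserving'' and then say it ``alter[s] the affine parametrization.'' In fact $s$ preserves the parametrization and $\beta$ changes it, which is exactly the opposite of what you need for your transversality argument to work. The paper's route (implicit, since the corollary is not given a separate proof) avoids all of this: Proposition~\ref{propconconst} already furnishes an explicit parametrization $\beta\mapsto D^\beta$ of the projective class and shows that $\nabla^{\beta,0}$ shares parametrized geodesics with $D^\beta$; the displayed formula in the proof of that proposition, $\nabla^{\beta,0}-\nabla^{0,0}=(\beta_a\delta_b{}^d+\beta_b\delta_a{}^d+J_{ab}\beta^d)$, is exactly the definition of $\nabla^\beta$ in the corollary. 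The bijection is then simply $D^\beta\leftrightarrow\nabla^\beta$, with both sides indexed by the same one--form $\beta$ --- the choice $s=0$ is what singles out one connection from the $s$--family, not any uniqueness property of $\nabla_D$.
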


We say that the class $[\nabla^\beta]$ from the above corollary is \emph{the class of Weyl connections on the ACAF--structure}. 

The class of the Weyl connections on a conformal Riemannian structure provides a Cartan (parabolic) geometry  that solves the equivalence problem for the conformal Riemannian structures and has many other applications, cf. \cite{parabook}. The projective structure $[D]$ itself provides a Cartan (parabolic) geometry, too. In the case of ACAF--structures, the distinguished connection $\nabla^0$ and the class $[\nabla^\beta]$ of Weyl connections provide different Cartan geometries with different applications.

Firstly, there is a Cartan connection $\om^0$ of type $(\mathbb{R}^n\rtimes CSp(n,\mathbb{R}),CSp(n,\mathbb{R}))$ with totally trace--free torsion on the bundle $\ba_0$, where we assume that $\mathbb{R}^n$ is the standard representation of $CSp(n,\mathbb{R})$. This is the Cartan geometry given by the ACS--connection $\nabla^0$, see \cite[Section 1.6.1]{parabook}. The Cartan geometry $(\ba_0\to M,\om^0)$ of type $(\mathbb{R}^n\rtimes CSp(n,\mathbb{R}),CSp(n,\mathbb{R}))$ can be used to solve the equivalence problem for ACAF--structures, to compute all (infinitesimal) automorphism and invariants of the ACAF--structures. Moreover, it makes possible to construct all invariant differential operators between natural vector bundles associated with ACAF--structures.

Secondly, there is a Cartan geometry $(\ba\to M,\om)$ of type $(\fl,P,\Ad)$, where $P$ is a (parabolic) subgroup of $Sp(n+2,\mathbb{R})$ stabilizing an isotropic line for standard action on $\mathbb{R}^{n+2}$, $\fl$ is a condimension one $P$--invariant subspace of $\frak{sp}(n+2,\mathbb{R})$ and $\Ad$ is the restriction of the adjoint representation of $Sp(n+2,\mathbb{R})$ to $P$ and $\fl$. This Cartan geometry is not modeled on a Klein geometry, but modeled on a skeleton $(\fl,P,\Ad)$, see Appendix \ref{ApS}.  We fix a symplectic form on $\mathbb{R}^{n+2}$ of the form 
$$((x_0,\dots,x_{n+1}),(y_0,\dots,y_{n+1}))\mapsto x_0y_{n+1}-y_0x_{n+1}+\sum_{i=1}^{\frac{n}{2}}x_iy_{\frac{n}{2}+i}-y_ix_{\frac{n}{2}+i}$$
and denote by $Sp(n+2,\mathbb{R})$ the group of linear transformations preserving this symplectic form. Note that this symplectic form restricts to $J$ on the subspace $(0,x_1,\dots, x_n,0)$. We fix $P$ in $Sp(n+2,\mathbb{R})$ as the stabilizer of the line given by $x_0$. Let us recall that $P$ is parabolic subgroup of $Sp(n+2,\mathbb{R})$ associated to the following contact grading of 
\begin{align*}
\frak{sp}(n+2,\mathbb{R})=\begin{pmatrix}
\fp_{0}& \fp_{1} & \fp_{2} \cr
\fl_{-1}& \fp_{0} & \fp_{1}\cr
\fg_{-2}& \fl_{-1} & \fp_{0} \cr
\end{pmatrix}:=
\begin{Bmatrix}
\begin{pmatrix}
a& Y_c &  z \cr
X^d& A_c{}^d & Y^d\cr
x& -X_c & -a \cr
\end{pmatrix},
A_c{}^d =A_d{}^c,\ x,a,z\in \mathbb{R}
\end{Bmatrix}.
\end{align*}
Let us point out that we view $CSp(n,\mathbb{R})$ as the subgroup of $P$ preserving this grading and that the grading has form $\frak{sp}(n+2,\mathbb{R})=\mathbb{R}[-2]\oplus \mathbb{R}^n\oplus \frak{csp}(n,\mathbb{R})\oplus (\mathbb{R}^n)^*\oplus \mathbb{R}[2]$ as a $CSp(n,\mathbb{R})$--module. Further, we consider the $CSp(n,\mathbb{R})$--invariant decompositions $\fl=\fl_{-1}\oplus \fp_0\oplus \fp_1\oplus \fp_2$ and $P= CSp(n,\mathbb{R})\exp(\fp_1)\exp(\fp_2)$. In Section \ref{sec4}, we describe the relations between the Cartan geometries of type $(\fl,P,\Ad)$, the Cartan geometries of type $(\mathbb{R}^n\rtimes CSp(n,\mathbb{R}),CSp(n,\mathbb{R}))$ and the ACAF--structures.

Since $P$ is a parabolic subgroup of $Sp(n+2,\mathbb{R})$, many of the properties and constructions from the theory of parabolic geometries described in \cite{parabook} carry over to the case of Cartan geometries of type $(\fl,P,\Ad)$. In particular, we can carry over the theory of Weyl structures from \cite[Section 5.1]{parabook}, because for a Cartan geometry $(\ba\to M,\om)$ of type $(\fl,P,\Ad)$, we identify in the Section \ref{sec4} the quotient $\ba/\exp(\fp_1)\exp(\fp_2)$ with the underlying $CSp(n,\mathbb{R})$--bundle $\ba_0$. Then a $CSp(n,\mathbb{R})$--equivariant section $\si: \ba_0 \to \ba$ is called \emph{a Weyl structure} of the Cartan geometry $(\ba\to M,\om)$ of type $(\fl,P,\Ad)$ and we say that
\begin{enumerate}
\item the component $\si^*\om_-$ of $\si^*\om$ with values in $\fl_{-1}$ is \emph{a soldering form},
\item the component $\si^*\om_0$ of $\si^*\om$ with values in $\fp_0$ is \emph{a Weyl connection}, and
\item the components $\si^*\om_1\oplus \si^*\om_2$ of $\si^*\om$ with values in $\fp_1\oplus \fp_2$ are \emph{Rho tensors}.
\end{enumerate}

The Weyl structures allow us to characterize the subcategory of the category of Cartan geometries of type $(\fl,P,\Ad)$ that is equivalent to the category of ACAF--structures. Let us emphasize that this subcategory is not unique and depends on a so--called normalization conditions.

\begin{thm}\label{1.6}
There is an equivalence of categories between the category of ACAF--structures on $M$ and the category of Cartan geometries $(\ba\to M,\om)$ of type $(\fl,P,\Ad)$ that admit a Weyl structure $\si^0: \ba_0\to \ba$ such that
\begin{enumerate}
\item the Weyl connection of $\si^0$ has totally trace--free torsion, and
\item the Rho--tensors of $\si^0$ vanish identically.
\end{enumerate}

In particular, for such a Cartan geometry, the class of Weyl connections of all Weyl structures is precisely the class of Weyl connections on the ACAF--structure.
\end{thm}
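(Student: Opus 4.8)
The plan is to run the theory of Weyl structures, which transfers from \cite[Section 5.1]{parabook} to the skeleton model $(\fl,P,\Ad)$ because $P$ is a genuine parabolic subgroup of $Sp(n+2,\R)$ and $\fl$ is $P$--invariant. Concretely I will use that for any Cartan geometry $(\ba\to M,\om)$ of type $(\fl,P,\Ad)$ Weyl structures $\si\colon\ba_0\to\ba$ exist (the fibre of $\ba\to\ba_0$ is the contractible group $\exp(\fp_1)\exp(\fp_2)$), that the quadruple $(\si^*\om_-,\si^*\om_0,\si^*\om_1,\si^*\om_2)$ determines $\om$ by $P$--equivariance and reproduction of fundamental vector fields, and conversely that the tautological soldering form on $\ba_0$ together with an arbitrary principal connection on $\ba_0$ and arbitrary Rho tensors reconstructs a Cartan connection on $\ba:=\ba_0\times_{CSp(n,\mathbb{R})}P$. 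I also record, computed from the adjoint action of $\exp(\fp_1)\exp(\fp_2)$ exactly as in the parabolic case, the change--of--Weyl--structure formulas: replacing $\si$ by $\si\cdot\exp(\Upsilon_1)\exp(\Upsilon_2)$ with $\Upsilon_i\in\Gamma(\ba_0\times_{CSp(n,\mathbb{R})}\fp_i)$ leaves $\si^*\om_-$ unchanged (since $[\fp_1,\fl_{-1}]\subset\fp_0$ and $\fl$ has no $\fg_{-2}$--part), affects $\si^*\om_0$ only through $\Upsilon_1$, and changes the $\fp_1$--component of the Rho tensor by the $\fp_1$--valued term $[\Upsilon_2,\si^*\om_-]$ plus contributions of $\Upsilon_1$.

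The forward functor will send an ACAF--structure $(M,\ell,\nabla^0)$ to the Cartan geometry reconstructed from the data (tautological soldering form $\theta$ on $\ba_0$, principal connection induced by $\nabla^0$, zero Rho tensors), equipped with the canonical Weyl structure $\si^0(u):=[u,e]$; on morphisms it is the natural lift of a diffeomorphism to frame bundles (well defined because $f^*\ell'=\ell$), extended over $P$, which intertwines the Cartan connections because it intertwines $\theta$ and $\nabla^0$. By construction $\si^0$ has Weyl connection $\nabla^0$, whose torsion is totally trace--free by Definition \ref{def1.2}, and vanishing Rho tensors, so the image lies in the stated subcategory. The inverse functor will send $(\ba\to M,\om)$ admitting a Weyl structure $\si^0$ with (1),(2) to $(M,\ell,\nabla^0)$, where $\ba_0:=\ba/\exp(\fp_1)\exp(\fp_2)$ (Section \ref{sec4}), $\ell$ is the ACS--structure determined by the coframe $(\si^0)^*\om_-$ through $J_{ab}$, and $\nabla^0:=(\si^0)^*\om_0$; this $\nabla^0$ is a principal connection on $\ba_0$ with values in $\frak{csp}(n,\mathbb{R})$, hence an ACS--connection since then $\nabla^0_aJ_{bc}=0$ (Section \ref{sec3}), and it has totally trace--free torsion by (1), so $(M,\ell,\nabla^0)$ is an ACAF--structure. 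That the two functors are mutually quasi--inverse is then routine bookkeeping: $\ba/\exp(\fp_1)\exp(\fp_2)=\ba_0$ and the Weyl--structure data $(\theta,\nabla^0,0,0)$ are recovered on both sides, and the principal $P$--bundle isomorphism $\ba_0\times_{CSp(n,\mathbb{R})}P\to\ba$, $[u,p]\mapsto\si^0(u)\cdot p$, intertwines the reconstructed $\om$ with the given one because the two agree on $\si^0(\ba_0)$ and are each determined by $P$--equivariance.

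The one step that needs genuine input is the uniqueness of a Weyl structure $\si^0$ satisfying (1) and (2); it is used for fullness and faithfulness on morphisms (a morphism of Cartan geometries carries such a Weyl structure to another one with the same two properties, hence to the distinguished one, so it descends to a morphism of ACAF--structures preserving $\ell$ and $\nabla^0$), and for the final assertion. Suppose $\si^0\cdot\exp(\Upsilon_1)\exp(\Upsilon_2)$ also satisfies (1),(2). By the change--of--Weyl--connection formula its Weyl connection is a connection $\nabla^\beta$ from the Corollary with $\beta$ determined by $\Upsilon_1$ (and $\Upsilon_2$ not entering, as $[\fp_2,\fl_{-1}]\subset\fp_1$ and $[\fp_2,\fp_0]\subset\fp_2$); since all $\nabla^\beta$ are ACS--connections sharing the geodesics up to parametrization with the projective class of $\nabla^0$, the uniqueness statement of Theorem \ref{thm1} together with (1) forces $\nabla^\beta=\nabla^0$, i.e.\ $\Upsilon_1=0$. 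Then the remaining change is by $\exp(\Upsilon_2)$, whose $\fp_1$--Rho tensor is $[\Upsilon_2,(\si^0)^*\om_-]$; because the map $X\mapsto[\Upsilon_2,X]$ from $\fl_{-1}$ to $\fp_1$ is nonzero whenever $\Upsilon_2\neq0$ and $(\si^0)^*\om_-$ is a coframe, (2) forces $\Upsilon_2=0$. Hence $\si^0$ is unique, and the same formula shows that as $\si$ runs over all Weyl structures $\si^*\om_0$ runs exactly over $\{\nabla^\beta:\beta\in\Omega^1(M)\}=[\nabla^\beta]$, which is the last claim.

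I expect the bulk of the work to be verification rather than discovery: checking that the Weyl--structure package of \cite[Section 5.1]{parabook} — existence, the reconstruction of $\om$, the transformation rules, and the fact that $\si^*\om_0$ is a principal connection on $\ba_0$ — transfers verbatim to $(\fl,P,\Ad)$, and then feeding the explicit torsion behaviour of the class $\nabla^\beta$ (as in the proof of Theorem \ref{thm1}) into the uniqueness argument. No curvature normalization is needed: the reconstructed $\om$ is admissible for every ACAF--structure and its curvature is simply whatever $\nabla^0$ produces.
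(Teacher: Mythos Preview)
Your approach is essentially the same as the paper's: the paper phrases the forward functor as the extension functor $\mathcal{F}$ of \cite[Theorem 1.5.15]{parabook} and \cite[Theorem 1.3]{ja-arx} applied to the Cartan geometry $(\ba_0,\om^0)$ of type $(\mathbb{R}^n\rtimes CSp(n,\mathbb{R}),CSp(n,\mathbb{R}))$ determined by $\nabla^0$, which is exactly your construction $\ba_0\times_{CSp(n,\mathbb{R})}P$ with $\om$ assembled from $(\theta,\nabla^0,0,0)$, and then characterizes the image of $\mathcal{F}$ by the existence of a Weyl structure with properties (1),(2). Your explicit uniqueness argument for $\si^0$ (first $\Upsilon_1=0$ from Theorem \ref{thm1}, then $\Upsilon_2=0$ from the Rho--transformation $\hat\Rho_{ab}=\Rho_{ab}+yJ_{ab}$) fills in what the paper leaves implicit, and your derivation of the final claim from the Weyl--connection transformation formula matches the paper's.
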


We will see that we can prescribe different values to the Rho--tensors of the distinguished Weyl structure $\si^0$ and obtain different normalization conditions.

As in \cite[Section 5.1]{parabook}, any Weyl structure $\si$ provides a splitting, that is, an isomorphism $\ba\times_P U\cong \ba_0\times_{CSp(n,\mathbb{R})}U$, of each associated bundle for a representation $U$ of $P$ representations to (sum of) associated bundles for the induced $CSp(n,\mathbb{R})$--representations. In this article, we use the distinguished nature of the Weyl structure $\si^0$ and identify $\ba\times_P U= \ba_0\times_{CSp(n,\mathbb{R})}U$ using the splitting provided by the Weyl structure $\si^0$. A consequence of the Theorem \ref{1.6} is that this identification is natural in the way that depends only on the ACAF--structure and is preserved by all automorphisms of the ACAF--structure.

We show in Section \ref{sec5} that for Cartan geometries $(\ba\to M,\om)$ of type $(\fl,P,\Ad)$, we can construct analogies of the Bernstein-Gelfand-Gelfand (shortly BGG)--sequences and BGG--complexes on the projective and conformal structures, cf. \cite{BGG}. In fact, we adapt the construction of BGG--sequences and BGG--complexes from \cite{CSou} for the Cartan geometries of type $(\fl,P,\Ad)$. Let us emphasize that since it enough to work in the splitting provided by the distinguished Weyl structure $\si^0$, we can construct many more sequences (and sometimes complexes) of invariant differential operators than in \cite{CSou}. Therefore we call them BGG--like sequences and BGG--like complexes.

In Section \ref{sec6}, we provide examples of BGG--like sequences that exist on all ACAF--structures and that become BGG--like complexes on flat ACAF--structures. In particular, on CF--structures, we construct some of these examples between the same bundles and with the same symbol as the descended BGG--sequences in \cite{CSaII}, however in general, these can differ from our examples by an invariant operator of lower order. Further, we show how we can construct the BGG complexes from \cite{ES} and the descended BGG--complexes from \cite{CSaII} that exist on CF--structures with particular curvature (curvature of Ricci type).

There are further possible applications of the Cartan geometry of type $(\fl,P,\Ad)$. As in the case of projective and conformal Riemannian structures, the first differential operator in the BGG--like sequence is an overdetermined differential operator and the BGG--machinery for the Cartan geometries of type $(\fl,P,\Ad)$ prolongs this operator to a closed form (a system of first order ODEs). For example, we can conjecture that one of these operators should give a solution to the question, whether there is a Weyl connection on ACAF--structure preserving a complex structure. This means that the theory of the holonomy reductions of the Cartan geometries from \cite{CG} should have a reasonable adaptation for the Cartan geometries of type $(\fl,P,\Ad)$. We conjecture that such a holonomy reduction for a subgroup $K\subset Sp(n+2,\mathbb{R})$ should decompose $M$ into the so--called curved orbits depending on certain $K$--orbits in $Sp(n+2,\mathbb{R})/P$. Moreover, an orbit corresponding to $g\in Sp(n+2,\mathbb{R})$ should carry a Cartan geometry of type $(\fl\cap \Ad(g)\frak{k},P\cap \Ad(g)K,\Ad)$.

\section{Connections on ACAF--structures}\label{sec3}

Let us start with an ACAF--structure $(M,\ell,[D])$ and view $J_{bc}$ as a section of $\wedge^2T^*M[-2]$. We can use the representation theory of the group $CSp(n,\mathbb{R})$ and decompose $D_aJ_{bc}$ to irreducible components of $T^*M\otimes\wedge^2T^*M[-2]$, see \cite{AP} for details. In general, we get the following decomposition
$$D_a J_{bc}=2\alpha_a J_{bc}+J_{ab}\beta_c-J_{ac}\beta_b-H_{bca}+2S_{bca},$$
where $S_{b(ca)}=0,S_{bc}{}^b=0,H_{[bca]}=0,H_{bc}{}^b=0.$ Since $D_a(zJ)_{bc}=D_azJ_{bc}+zD_aJ_{bc}$ holds for section $z$ of $\ba_0\times_{CSp(n,\mathbb{R})}\mathbb{R}[2]$, we see that $D$ in the projective class is an ACS--connection if and only if
$$D_aJ_{bc}=2\alpha_aJ_{bc}$$
holds for the induced linear connection on $\wedge^2T^*M[-2]$ and some one--form $\alpha_a$.

An element $s_{ab}{}^d$ acts (as a one--form valued in the endomorphisms of $TM$) on $J_{bc}$ as $-J_{dc}s_{ab}{}^d-J_{bd}s_{ac}{}^d+\frac{2}{n}s_{ad}{}^dJ_{bc}$. Therefore if $\bar D_a\nu^d=D_a\nu^d+(\U_a\delta_b{}^d+\U_b\delta_a{}^d)\nu^b$ is substituted in the formula for $D_a J_{bc}$, then $H_{bca},\ S_{bca}$ remain the same and the one--forms $\alpha, \beta$ are subject to the change
$$\bar \alpha_a=\alpha_a+\frac1{n}\U_a, \bar \beta_a=\beta_a+\U_a.$$ In particular, the one--form $\beta$ allows us to distinguish between the linear connections in the class $[D]$.

\begin{def*}
Let $(M,\ell,[D])$ be an ACAF--structure.  For each one--form $\beta_a$, we denote by $D^\beta$ the unique linear connection in $[D]$ satisfying
$$D^\beta_a J_{bc}=2\alpha_a J_{bc}+J_{ab}\beta_c-J_{ac}\beta_b-H_{bca}+2S_{bca}.$$
\end{def*}

Now we can  use the construction from \cite[Theorem 1.1]{GRSFm} to obtain all ACS--connections that share the geodesics (up to parametrization) with the projective class $[D^\beta]$.

\begin{prop}\label{propconconst}
Let $(M,\ell,[D^\beta])$ be an ACAF--structure. The linear connections
$$\nabla_a^{\beta,s}\nu^d:=D^\beta_a\nu^d+(s_a \delta_{b}{}^d-s_b \delta_{a}{}^d-s^d J_{ab}+H_{ab}{}^d+S_{ab}{}^d+J_{ab}\beta^d)\nu^b$$
with the torsion
$$(T^{\beta,s})_{ab}{}^d=2(s_a \delta_{b}{}^d-s_b \delta_{a}{}^d-s^dJ_{ab}+H_{ab}{}^d+S_{ab}{}^d+J_{ab}\beta^d)
$$
for all one--forms $s_a$ exhaust all ACS--connections on $M$ share the same geodesics (including the parametrization) with $D^\beta$. Moreover, $\nabla_a^{\beta,s}J_{bc}=0$ and $\alpha_a=\frac1n\beta_a$.

Therefore all ACS--connections that share the same geodesics (up to parametrization) with $[D^\beta]$ are of the form
$\nabla^{\beta,s}$ for all one--forms $\beta,s$. In particular, $$\nabla^{0}:=\nabla^{0,0}$$ is the unique ACS--connection with a totally trace--free torsion and that shares the same geodesics (up to parametrization) with $[D^\beta]$.
\end{prop}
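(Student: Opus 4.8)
The plan is to build the connection $\nabla^{\beta,s}$ explicitly from the projective connection $D^\beta$ by adding a one--form valued endomorphism of $TM$, then to use the cited result \cite[Theorem 1.1]{GRSFm} to recognize it as an ACS--connection, and finally to invoke Theorem \ref{thm1} to conclude exhaustiveness and uniqueness. First I would recall that a connection $\nabla$ and a torsion--free connection $D^\beta$ share the same (unparametrized and parametrized) geodesics precisely when $\nabla_a\nu^d=D^\beta_a\nu^d+\Phi_{ab}{}^d\nu^b$ where the symmetric part $\Phi_{(ab)}{}^d$ vanishes (so that the geodesic equation is unchanged); the antisymmetric part $\Phi_{[ab]}{}^d=\tfrac12 T_{ab}{}^d$ is then precisely half the torsion, which may be prescribed freely. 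Setting $\Phi_{ab}{}^d:=s_a\delta_b{}^d-s_b\delta_a{}^d-s^dJ_{ab}+H_{ab}{}^d+S_{ab}{}^d+J_{ab}\beta^d$, one checks directly from $S_{b(ca)}=0$ and $H_{[bca]}=0$ that $\Phi_{(ab)}{}^d=0$ (the $s$--terms and $\beta$--terms are manifestly antisymmetric in $ab$ after lowering $d$ via $J$, and the $H,S$ contributions are antisymmetric by their symmetry type), so $\nabla^{\beta,s}$ indeed has the same parametrized geodesics as $D^\beta$ with the stated torsion $T^{\beta,s}=2\Phi$.

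Next I would verify the ACS--condition $\nabla^{\beta,s}_aJ_{bc}=0$ by a direct substitution. Using the definition of $D^\beta$, namely $D^\beta_aJ_{bc}=2\alpha_aJ_{bc}+J_{ab}\beta_c-J_{ac}\beta_b-H_{bca}+2S_{bca}$, together with the action of the endomorphism part computed in the text, $\Phi_{ab}{}^d$ acts on $J_{bc}$ as $-J_{dc}\Phi_{ab}{}^d-J_{bd}\Phi_{ac}{}^d$, one expands all terms. The terms involving $s$ produce a multiple of $J_{bc}$ that is designed to cancel $2\alpha_aJ_{bc}$ after accounting for the density shift; the $\beta^d$ terms cancel $J_{ab}\beta_c-J_{ac}\beta_b$; the $H_{ab}{}^d$ and $S_{ab}{}^d$ contributions, when contracted with $J$ and symmetrized appropriately, cancel $-H_{bca}+2S_{bca}$ by the defining symmetry relations $S_{b(ca)}=0$, $S_{bc}{}^b=0$, $H_{[bca]}=0$, $H_{bc}{}^b=0$. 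Along the way the trace identity $\Phi_{ad}{}^d=\tfrac{2-n}{1}$--type computation pins down the relation $\alpha_a=\tfrac1n\beta_a$: taking the trace over $d$ and $b$ in $D^\beta_aJ_{bc}$ using $H_{bc}{}^b=S_{bc}{}^b=0$ forces $2\alpha_a$ to equal a multiple of $\beta_a$, and the normalization in the definition of $\mathbb{R}[w]$ as $(\det A)^{-w/n}$ gives the factor $\tfrac1n$. The main obstacle is precisely this bookkeeping: one must be scrupulous about index ordering and about the density weight changing whenever $J_{ab}$ or $J^{bc}$ is applied, since (as the excerpt stresses) $\delta^d{}_a=-\delta_a{}^d$ under raising and lowering, so signs are easy to mishandle.

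Finally, for exhaustiveness and uniqueness I would argue as follows. By \cite[Theorem 1.1]{GRSFm}, every torsion--free connection — in particular each $D^\beta\in[D]$ — admits at least one ACS--connection with the same parametrized geodesics, and the construction above produces one such for each $s_a$; by the first part of Theorem \ref{thm1}, any two ACS--connections with the same parametrized geodesics differ exactly by the one--form family $s_a\mapsto(s_a\delta_b{}^d-s_b\delta_a{}^d-J_{ab}s^d)$, so the family $\{\nabla^{\beta,s}:s_a\in\Omega^1(M)\}$ is the complete set of ACS--connections sharing geodesics with $D^\beta$. Letting $\beta$ vary over all one--forms lets $D^\beta$ range over all of $[D]$, and the second part of Theorem \ref{thm1} shows that two ACS--connections share the same unparametrized geodesics iff they differ by the two--parameter family in $s$ and $\beta$; hence $\{\nabla^{\beta,s}\}$ is exactly the set of ACS--connections sharing unparametrized geodesics with $[D]$. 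For the last claim, the torsion $T^{\beta,s}=2\Phi$ is totally trace--free iff its traces over $d$ with $a$ and with $b$ both vanish; since $H,S$ are already totally trace--free, the traces of $T^{\beta,s}$ reduce to multiples of $s_a$ and of $\beta_a$ (here one uses $n\geq6$ so the relevant coefficients $n\pm1$, $n\pm2$ are nonzero), which vanish iff $s_a=\beta_a=0$. Thus $\nabla^0=\nabla^{0,0}$ is the unique ACS--connection with totally trace--free torsion in this class, completing the proof.
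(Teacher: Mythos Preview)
Your argument is largely correct in structure, but the exhaustiveness step is circular. You invoke both parts of Theorem~\ref{thm1} to show that the family $\{\nabla^{\beta,s}\}$ exhausts all ACS--connections sharing (parametrized or unparametrized) geodesics with $D^\beta$. However, in the paper Theorem~\ref{thm1} is \emph{deduced from} this Proposition, not the other way around: immediately after the proof of the Proposition the paper states that Theorem~\ref{thm1} is a direct consequence of it. So you cannot appeal to Theorem~\ref{thm1} here.

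The paper closes this gap by a direct representation--theoretic count instead. Any two ACS--connections differ by a section of $\ba_0\times_{CSp(n,\mathbb{R})}(\mathbb{R}^n)^*\otimes\frak{csp}(n,\mathbb{R})$, and one has the $CSp(n,\mathbb{R})$--module isomorphism $(\mathbb{R}^n)^*\otimes\frak{sp}(n,\mathbb{R})\cong S^2(\mathbb{R}^n)^*\otimes\mathbb{R}^n$. Hence two ACS--connections with the same parametrized geodesics (i.e.\ whose difference is purely antisymmetric in $ab$) differ only by the $(\mathbb{R}^n)^*\otimes\mathbb{R}$ piece, that is, by a single one--form, which is then identified with $s_a$. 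This is the missing idea in your argument; once you have it, the rest of your outline (varying $\beta$ to cover the projective class, and reading off $s=\beta=0$ from the trace of the torsion) goes through. A minor additional point: your derivation of $\alpha_a=\tfrac{1}{n}\beta_a$ by ``taking the trace over $d$ and $b$ in $D^\beta_aJ_{bc}$'' is garbled (there is no index $d$ there); the paper obtains it cleanly by computing $\nabla^{\beta,s}_aJ_{bc}=(2\alpha_a-\tfrac{2}{n}\beta_a)J_{bc}$ and observing that this must vanish since $J_{bc}$ is a constant section of a trivial bundle.
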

\begin{proof}
If we define the following tensor field
\begin{align*}
(F^{\beta,s})_{ab}{}^d:&=\frac12(2s_a \delta_{b}{}^d+J_{ab}\beta^d-\delta_a{}^d\beta_b-H_b{}^d{}_a+2S_b{}^d{}_a),
\end{align*}
then the linear connection $\nabla^{\beta,s}$ can be equivalently written by the formula
$$\nabla_a^{\beta,s}\nu^d=D^\beta_a\nu^d+((F^{\beta,s})_{ab}{}^d-(F^{\beta,s})^{d}{}_{ab}-(F^{\beta,s})_{ba}{}^d)\nu^b.$$

The element $(F^{\beta,s})_{ab}{}^d$ maps $J_{bc}$ onto $$J_{cd}(F^{\beta,s})_{ab}{}^d-J_{bd}(F^{\beta,s})_{ac}{}^d+\frac{2}{n}(F^{\beta,s})_{ad}{}^dJ_{bc}=-(F^{\beta,s})_{abc}+(F^{\beta,s})_{acb}+\frac{2}{n}(F^{\beta,s})_{ad}{}^dJ_{bc}$$ and thus
$(F^{\beta,s})^{d}{}_{ab}$ maps $J_{bc}$ onto $-(F^{\beta,s})_{cab}+(F^{\beta,s})_{bac}-\frac{2}{n}(F^{\beta,s})_{da}{}^dJ_{bc}$. Therefore
$$\nabla^{\beta,s}_a J_{bc}
=D^\beta_a J_{bc}-2(F^{\beta,s})_{abc}+(2s_a-\frac2n\beta_a)J_{bc}=(2\alpha_a-\frac2n\beta_a) J_{bc}$$
holds. Therefore $\nabla^{\beta,s}_a$ is an ACS-connection and $\nabla^{\beta,s}_a J_{bc}=0$, because $J_{bc}$ is a constant section of trivial bundle. Since the connections $\nabla^{\beta,s}$ are ACS--connections that differ from $D^\beta$ by sections of $\wedge^2T^*M\otimes TM$, the connections $\nabla^{\beta,s}$ and $D^\beta$ share the same geodesics (including the parametrization).

The symmetric part of the difference $$\nabla_a^{\beta+\U,s} \nu^d-\nabla_a^{\beta,s} \nu^d
 =(\U_a\delta_b{}^d+\U_b\delta_a{}^d+J_{ab}\U^d)\nu^b$$ is exactly the projective change $D^{\beta+\U}_a\nu^d-D^\beta_a\nu^d=(\delta_a{}^{d}\U_b+\U_a\delta_b{}^{d})\nu^b$ and thus it remains to prove that we have already found all ACS--connections that share the same geodesics  (up to parametrization) with $[D^\beta]$.

The change between two ACS--connections is a section of $\ba\times_{CSp(n,\mathbb{R})}(\mathbb{R}^n)^*\otimes \frak{csp}(n,\mathbb{R})$. Since $(\mathbb{R}^n)^*\otimes \frak{sp}(n,\mathbb{R})\cong S^2(\mathbb{R}^n)^*\otimes \mathbb{R}^n$, two ACS--connections that share the same geodesics (including the parametrization) differ by a one--form and the claim follows, because we can identify this one--form with the one--form $s_a$.
\end{proof}

A simple computation shows that the Theorem \ref{thm1} is a direct consequence of the above proposition. Let us discus the role of the components $S_{bc}{}^d,\ H_{bc}{}^d$ of the torsion of $\nabla^0$.

\begin{lem}\label{lem3.1}
\begin{enumerate}
\item The section $S_{bc}{}^d$ of $\wedge^2T^*M\otimes TM$ is the structural torsion of the ACS--structure $\ell$, i.e., it is an obstruction for $\ell$ to be a conformally symplectic structure (and for the ACAF--structure to be a CAF--structure).
\item The section $H_{bc}{}^d$ of $\wedge^2T^*M\otimes TM$ provides an obstruction for the ACAF--structure to be an ACF--structure.
\end{enumerate}
\end{lem}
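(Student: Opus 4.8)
The plan is to analyze each torsion component of $\nabla^0$ separately, using the classification of ACS--connections sharing geodesics with $[D]$ from Proposition \ref{propconconst}, and to match the vanishing of a component with the definitions of CAF-- and ACF--structures. The torsion of $\nabla^{\beta,s}$ is $2(s_a \delta_{b}{}^d-s_b \delta_{a}{}^d-s^dJ_{ab}+H_{ab}{}^d+S_{ab}{}^d+J_{ab}\beta^d)$, and for $\nabla^0=\nabla^{0,0}$ this is $2(H_{ab}{}^d+S_{ab}{}^d)$; so the two natural pieces are $S_{bc}{}^d$ and $H_{bc}{}^d$, which are, by the defining decomposition $D_a J_{bc}=2\alpha_a J_{bc}+J_{ab}\beta_c-J_{ac}\beta_b-H_{bca}+2S_{bca}$, exactly the components with symmetry types $H_{[bca]}=0$, $H_{bc}{}^b=0$ and $S_{b(ca)}=0$, $S_{bc}{}^b=0$, living in distinct $CSp(n,\mathbb{R})$--irreducible summands.

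For part (1): I would first observe that $S_{bc}{}^d$ is the totally trace--free, skew part (in $bc$) of $D_aJ_{bc}$, hence it only depends on $\ell$ and not on the choice of $D$ in $[D]$ or on $\beta,s$; indeed the substitution rules $\bar\alpha_a=\alpha_a+\tfrac1n\U_a$, $\bar\beta_a=\beta_a+\U_a$ leave $H$ and $S$ unchanged, and adding an $s$--term leaves $S$ unchanged as shown in the proof of Proposition \ref{propconconst}. Next I would identify $S_{bc}{}^d$ with the invariant of $\ell$ governing local closedness: pick a local non--vanishing section $\sigma$ of $\ell$, write $\sigma=zJ$ in a local frame; then $d\sigma$ is (up to density twist) the skew part of $D_{[a}(zJ)_{bc]}=z\,D_{[a}J_{bc]}+(D_{[a}z)J_{bc]}$, and the term $(D_{[a}z)J_{bc]}$ contributes only to the $\beta$--type (``$J_{ab}\gamma_c$'') component while $H_{[bca]}=0$; so the totally trace--free skew part of $d\sigma$ is $3\,z\,S_{[abc]}=z\,S_{abc}$ (since $S_{b(ca)}=0$ forces $S_{abc}=S_{[abc]}$). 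Hence $\sigma$ can be chosen closed near a point iff $S_{abc}=0$ there (the trace part of $d\sigma$ can always be killed by rescaling $z$, as in the classical conformally symplectic discussion), which is exactly the CAF condition. This pins down $S_{bc}{}^d$ as the structural torsion of $\ell$.

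For part (2): by the definition of ACF--structure, I need an ACS--connection sharing geodesics (up to parametrization) with $[D]$ whose torsion equals the structural torsion $S_{bc}{}^d$ of $\ell$. By Proposition \ref{propconconst} every such connection is some $\nabla^{\beta,s}$ with torsion $2(s_a \delta_{b}{}^d-s_b \delta_{a}{}^d-s^dJ_{ab}+H_{ab}{}^d+S_{ab}{}^d+J_{ab}\beta^d)$; requiring this to equal $2S_{ab}{}^d$ means
\begin{equation*}
H_{ab}{}^d = -\,s_a \delta_{b}{}^d + s_b \delta_{a}{}^d + s^dJ_{ab} - J_{ab}\beta^d .
\end{equation*}
The right--hand side lies, by construction, in the ``$\delta$'' and ``$J$'' isotypic pieces (the same ones carrying $\alpha$ and $\beta$ in the decomposition of $D_aJ_{bc}$), which are $CSp(n,\mathbb{R})$--complementary to the irreducible piece containing $H$; since $H_{[abc]}=0$ and $H_{ab}{}^b=0$ put $H$ entirely in that complementary irreducible summand, the equation forces both sides to vanish, so such a connection exists iff $H_{ab}{}^d=0$. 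Thus $H_{bc}{}^d$ is precisely the obstruction to the ACF condition.

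The main obstacle is the representation--theoretic bookkeeping in part (2): one must verify carefully that the tensor $-s_a \delta_{b}{}^d+s_b \delta_{a}{}^d+s^dJ_{ab}-J_{ab}\beta^d$, for arbitrary one--forms $s,\beta$, cannot have any component in the $CSp(n,\mathbb{R})$--irreducible type occupied by $H$ (the totally trace--free, $[abc]$--cyclic--free part of $T^*M\otimes\wedge^2T^*M$), i.e. that the only overlap of the two sides of the displayed equation is the zero tensor. This is a finite check with the explicit symmetry constraints $S_{b(ca)}=0=S_{bc}{}^b$, $H_{[bca]}=0=H_{bc}{}^b$ and the action of $s_{ab}{}^d$ on $J_{bc}$ recorded before the definition of $D^\beta$, so I would carry it out by contracting the displayed identity with $J^{bc}$, $J^{ad}$ and by cyclic antisymmetrization in $abc$ to peel off and kill the $s$-- and $\beta$--parts, leaving $H=0$.
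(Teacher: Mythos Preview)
Your argument is correct and follows the same approach as the paper: for (1) both identify $S$ with the trace--free part of $dJ$ (the paper records this as the trace--free part of $(dJ)_{abc}=D_aJ_{bc}+D_bJ_{ca}+D_cJ_{ab}$ being $6S_{abc}$, so your coefficient should read $6$ rather than $3$), and for (2) both deduce the claim directly from Proposition~\ref{propconconst}. You supply the details the paper leaves implicit---the rescaling argument that kills the trace part of $d\sigma$, and the irreducibility check separating the $H$--piece from the $s,\beta$--terms---and these are exactly right.
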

\begin{proof}
Since the trace--free part of $(dJ)_{abc}=D_a J_{bc}+D_b J_{ca}+D_c J_{ab}$ is equal to $6S_{abc}$, the torsion $S$ is the structural torsion of the ACS--structure $\ell$. The claim relating vanishing of $H$ with ACF--structures is a consequence of the Proposition \ref{propconconst}.
\end{proof}

Now we can prove the equivalence of categories between the category of ACAF--structures and the category of triples $(M,\ell,[D])$ given by a ACS--structure and a projective structure.

\begin{thm}\label{3.4}
Suppose $(M,\ell,[D])$ and $(M',\ell',[D'])$ correspond to ACAF--structures $(M,\ell,\nabla^0)$ and $(M',\ell',(\nabla')^0)$. Then a (local) diffeomorphism $f:M\to M'$ is a projective and ACS--morphism between $(M,\ell,[D])$ and $(M',\ell',[D'])$ if and only if it is ACAF--morphism between $(M,\ell,\nabla^0)$ and $(M',\ell',(\nabla')^0)$.
\end{thm}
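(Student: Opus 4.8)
The plan is to prove both implications using the uniqueness statement from Theorem~\ref{thm1}, which says that among the ACS--connections sharing the geodesics (up to parametrization) with a given projective class there is exactly one with totally trace--free torsion, and this connection depends only on the pair $(\ell,[D])$. The key point is that the passage $[D]\mapsto \nabla^0$ is canonical (natural) with respect to diffeomorphisms, so the two categories in question have the same morphisms between corresponding objects.

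First I would recall that a diffeomorphism $f:M\to M'$ is a projective morphism between $(M,[D])$ and $(M',[D'])$ precisely when $f^*[D']=[D]$, i.e. for some (equivalently, every) $D'\in[D']$ the pullback $f^*D'$ lies in $[D]$; and $f$ is an ACS--morphism when $f^*\ell'=\ell$, equivalently $f^*\ba_0'=\ba_0$ as $CSp(n,\mathbb{R})$--structures, equivalently $f^*J'_{bc}$ is a (nonvanishing) section of $\ell$. Similarly $f$ is an ACAF--morphism when $f^*\ell'=\ell$ and $f^*(\nabla')^0=\nabla^0$. So the content to be proven is: assuming $f^*\ell'=\ell$ throughout, one has $f^*[D']=[D]$ if and only if $f^*(\nabla')^0=\nabla^0$.

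For the direction ``projective + ACS $\Rightarrow$ ACAF'': suppose $f^*\ell'=\ell$ and $f^*[D']=[D]$. Pull back the connection $(\nabla')^0$ by $f$. Since $f^*\ell'=\ell$ and an ACS--connection is characterized by $\nabla_aJ_{bc}=0$ (as shown in Section~\ref{sec3}), the connection $f^*(\nabla')^0$ is an ACS--connection on $(M,\ell)$; since pullback preserves torsion and the trace conditions are pointwise tensorial and $CSp(n,\mathbb{R})$--invariant, $f^*(\nabla')^0$ has totally trace--free torsion; and since $(\nabla')^0$ shares geodesics (up to parametrization) with $[D']$ and $f$ maps geodesics to geodesics, $f^*(\nabla')^0$ shares geodesics (up to parametrization) with $f^*[D']=[D]$. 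Thus $f^*(\nabla')^0$ is an ACS--connection with totally trace--free torsion sharing geodesics with $[D]$, so by the uniqueness clause of Theorem~\ref{thm1} it equals $\nabla^0$. Conversely, if $f^*\ell'=\ell$ and $f^*(\nabla')^0=\nabla^0$, then $f$ maps the geodesics of $\nabla^0$ to the geodesics of $(\nabla')^0$, hence maps $[D]$ (the projective class of $\nabla^0$, by Theorem~\ref{3.4}'s identification, equivalently by definition) to $[D']$, so $f$ is a projective morphism; and $f^*\ell'=\ell$ is exactly the ACS--morphism condition. This gives the equivalence, and functoriality (identities and composites) is immediate since all conditions are expressed via pullback.

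The main obstacle, though essentially bookkeeping rather than a deep difficulty, is making precise that the assignment $(\ell,[D])\mapsto \nabla^0$ is genuinely canonical, i.e. that the uniqueness in Theorem~\ref{thm1} is uniqueness of a connection intrinsically attached to the data and not merely existence up to some choice; one must check that the construction in Proposition~\ref{propconconst} (the decomposition $D_aJ_{bc}=2\alpha_aJ_{bc}+J_{ab}\beta_c-J_{ac}\beta_b-H_{bca}+2S_{bca}$ and the formula for $\nabla^{0,0}$) commutes with pullback along $f$, which follows because the decomposition is dictated by $CSp(n,\mathbb{R})$--representation theory and pullback intertwines the frame bundles $\ba_0$ and $\ba_0'$. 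A secondary subtlety is the ``up to parametrization'' versus ``including parametrization'' distinction: one must use that $\nabla^0$ is singled out among \emph{all} ACS--connections geodesically compatible up to parametrization (the second and third clauses of Theorem~\ref{thm1}), so that no information about $[D]$ beyond its unparametrized geodesics is used, ensuring the correspondence is with the projective structure rather than with a particular connection in it.
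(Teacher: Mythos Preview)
Your proof is correct and follows essentially the same route as the paper: both directions hinge on the observation that pullback by $f$ carries $(\nabla')^0$ to an ACS--connection with totally trace--free torsion sharing geodesics with $[D]$, so uniqueness (the paper cites Proposition~\ref{propconconst}, you cite the equivalent uniqueness clause of Theorem~\ref{thm1}) forces $f^*(\nabla')^0=\nabla^0$. Your added remarks on naturality of the decomposition and the parametrization issue are sound but more explicit than what the paper spells out.
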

\begin{proof}
It holds $f^*\ell'=\ell$ if and only if $f$ is an ACS--morphism. If $f^*(\nabla')^0=\nabla^{0}$ holds, then $f$ maps the geodesics of $\nabla^{0}$ onto the geodesics of $(\nabla')^0$ and thus it is a projective morphism by construction of $\nabla^{0},(\nabla')^0$.

If $f$ is an ACAF--morphism, then we know that $f^*(\nabla')^0$ is an ACS--connection that has totally trace--free torsion and that shares the same geodesics (up to parametrization) with $[D]$. Thus the equality $f^*(\nabla')^0=\nabla^{0}$ follows from the uniqueness claim of the Proposition \ref{propconconst}.
\end{proof}

\section{Weyl structures on Cartan geometries of type $(\fl,P,\Ad)$ and proof of Theorem \ref{1.6}}\label{sec4}

Let us provide more results about the Weyl structures on the Cartan geometries of type $(\fl,P,\Ad)$. In particular, let us provide all the formulas for the change of the splittings for the representation $Ad$ of $P$ on $\fl$ for general Weyl structures. The formulas for the changes of splittings for the general representation can be found in \cite[Section 5.1]{parabook} and we will not need them explicitly due to the existence of the distinguished Weyl structure $\si^0$.

\begin{prop}
The following statements hold for a Cartan geometry $(\ba\to M,\om)$ of type $(\fl,P,\Ad)$:
\begin{enumerate}
\item There exists a global Weyl structure $\si: \ba/\exp(\fp_1)\exp(\fp_2)\to \ba.$ 
\item There is a unique $CSp(n,\mathbb{R})$--equivariant inclusion $\iota$ of $\ba/\exp(\fp_1)\exp(\fp_2)$ into the first order frame bundle on $M$ such that $\si^*\om_-=\iota^*\theta$ holds for the natural soldering form $\theta$ on the frame bundle and all Weyl structures $\si$. In particular, the image $\ba_0=\iota(\ba/\exp(\fp_1)\exp(\fp_2))$ does not depend on $\si$, defines an underlying ACS--structure $(M,\ell)$ and identifies $\ba_0=\ba/\exp(\fp_1)\exp(\fp_2)$.
\item Fixing one Weyl structure $\si$, there is bijective correspondence between the set of all Weyl structures and the space of smooth sections of $T^*M\oplus \ell$. Explicitly, this correspondence is given by mapping smooth section $\U_a+ yJ_{ab}$ of $T^*M\oplus \ell$ to the Weyl structure $$\hat \si=\si\exp(\U_a)\exp(y).$$
\item The Weyl connection $\si^*\om_0$ is a principal connection on the principal bundle $\ba_0\to M$ and if we denote by $\hat \nabla_a\nu^b,\nabla_a\nu^b$ the induced Weyl connections on $TM$ corresponding to the forms  $\hat \si^*\om_0,\si^*\om_0$, then 
$$\hat \nabla_a\nu^d=\nabla_a\nu^d+(\delta_a{}^{d}\U_b+\U_a\delta_b{}^{d}+\U^dJ_{ab})\nu^b.$$
\item The Rho tensor $\si^*\om_1$ is a tensor $\Rho_{ab}$, the Rho tensor $\si^*\om_2$ is an $\ell$--valued one--form $\Rho_{a}J_{bc}$ and
$$\hat \Rho_{ab}=\Rho_{ab}-\U_a\U_b+\nabla_a\U_b+y J_{ab},$$
$$\hat \Rho_{a}=\Rho_{a}+\nabla_ay+2\Rho_{ab}\U^b+\nabla_a \U_b\U^b-2\U_ay$$
holds for the Rho tensors of the Weyl structures $\hat \si^*,\si^*$
\end{enumerate}
\end{prop}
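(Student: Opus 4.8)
The plan is to transfer the construction of Weyl structures for parabolic geometries from \cite[Section 5.1]{parabook} to the skeleton $(\fl,P,\Ad)$; the only structural facts needed are the semidirect product decomposition $P=CSp(n,\mathbb{R})\ltimes P_+$ with $P_+:=\exp(\fp_1)\exp(\fp_2)$, the diffeomorphism $\fp_1\oplus\fp_2\to P_+$, $(Z_1,Z_2)\mapsto\exp(Z_1)\exp(Z_2)$, and the $CSp(n,\mathbb{R})$--invariant decomposition $\fl=\fl_{-1}\oplus\fp_0\oplus\fp_1\oplus\fp_2$ together with the brackets $[\fp_i,\fl_{-1}]$ and $[\fp_i,\fp_j]$ inside $\frak{sp}(n+2,\mathbb{R})$. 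The bracket $[\fl_{-1},\fl_{-1}]$, which leaves $\fl$, never enters the Weyl--structure formulas, so the fact that $\fl$ is only a skeleton and not a Lie algebra is irrelevant here. For (1): $\ba\to\ba/P_+$ is a $CSp(n,\mathbb{R})$--equivariant principal $P_+$--bundle; over any trivializing neighbourhood in $M$ there exist local $CSp(n,\mathbb{R})$--equivariant sections, and these are glued by a partition of unity on $M$ using the affine structure on the fibres of $\ba\to\ba_0$ exactly as in the $|2|$--graded parabolic case --- the two--step nilpotency of $P_+$ is harmless since the convex combinations are taken fibrewise.

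For (2): the $\fl_{-1}$--component $\si^*\om_-$ is independent of the Weyl structure. Indeed, if $\hat\si=\si\cdot\Phi$ for a $CSp(n,\mathbb{R})$--equivariant $\Phi:\ba_0\to P_+$, then $\hat\si^*\om=\Ad(\Phi^{-1})\circ\si^*\om+\Phi^*\om_P$ for the left Maurer--Cartan form $\om_P$ of $P$; the correction term $\Phi^*\om_P$ is $(\fp_1\oplus\fp_2)$--valued, while $\Ad(\Phi^{-1})$, being the adjoint action of an element of $P_+$, preserves the $P$--invariant filtration $\fl\supset\fp_0\oplus\fp_1\oplus\fp_2\supset\fp_1\oplus\fp_2\supset\fp_2$ and induces the identity on the top quotient $\fl/(\fp_0\oplus\fp_1\oplus\fp_2)\cong\fl_{-1}$, so the $\fl_{-1}$--part does not change. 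Since $\om$ is a linear isomorphism on each tangent space and $P$--equivariant, $\si^*\om_-$ is a strictly horizontal $CSp(n,\mathbb{R})$--equivariant $\mathbb{R}^n$--valued one--form on $\ba_0$ inducing a linear isomorphism $TM\to\ba_0\times_{CSp(n,\mathbb{R})}\mathbb{R}^n$; this data uniquely determines a $CSp(n,\mathbb{R})$--reduction $\iota$ of the first order frame bundle of $M$, and by the definition of $CSp(n,\mathbb{R})$ such a reduction is an ACS--structure. The independence of $\si$ gives $\si^*\om_-=\iota^*\theta$ for all $\si$ and the identification $\ba_0=\ba/P_+$.

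For (3): smooth sections of $T^*M\oplus\ell$ are the same data as $CSp(n,\mathbb{R})$--equivariant maps $\ba_0\to\fp_1\oplus\fp_2$, because $\fp_1\cong(\mathbb{R}^n)^*$ has associated bundle $T^*M$ and $\fp_2\cong\mathbb{R}[2]$ has associated bundle $\ell$; via $\exp$ these correspond to $CSp(n,\mathbb{R})$--equivariant maps $\ba_0\to P_+$, which form the gauge group of the principal $P_+$--bundle $\ba\to\ba_0$ and therefore act freely and transitively on the (by (1) nonempty) set of its global $CSp(n,\mathbb{R})$--equivariant sections, i.e.\ on the set of Weyl structures. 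Fixing $\si$ then yields the bijection sending $\U_a+yJ_{ab}$ to $\hat\si=\si\exp(\U_a)\exp(y)$.

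Finally, (4) and (5) are obtained by expanding $\hat\si^*\om=\Ad(\exp(-y)\exp(-\U))\circ\si^*\om+(\exp(\U)\exp(y))^*\om_P$ and separating the result by homogeneity degree in the contact grading of $\frak{sp}(n+2,\mathbb{R})$. The degree $0$ part gives the Weyl connection formula (4): the only new contribution beyond $\si^*\om_0$ is $[\,\cdot\,,\U]$ applied to $\si^*\om_-$, which lands in $\fp_0=\frak{csp}(n,\mathbb{R})$ via $[\fp_1,\fl_{-1}]\subset\fp_0$; writing the resulting $\frak{csp}(n,\mathbb{R})$--valued one--form as an endomorphism of $TM$ yields exactly $\delta_a{}^d\U_b+\U_a\delta_b{}^d+\U^dJ_{ab}$, which is the symmetric part of the connection change already appearing in the proof of Proposition \ref{propconconst}. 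The degree $1$ and degree $2$ parts give the two Rho tensor formulas in (5) by the same kind of bookkeeping from the remaining brackets --- $[\fp_1,\fp_0]$, $[\fp_2,\fl_{-1}]$, $[\fp_1,[\fp_1,\fl_{-1}]]$, $[\fp_1,\fp_1]$, $[\fp_2,\fp_0]$ --- together with the first-- and second--order terms of the Maurer--Cartan expansion $\delta(\exp(\U)\exp(y))=d\U-\tfrac12[\U,d\U]+dy+\dots$, where the first--order terms $d\U$ and $dy$ combine with the $\fp_0$--part of $\si^*\om$ into the covariant derivatives $\nabla_a\U_b$ and $\nabla_ay$. The main obstacle is precisely this last computation: one must track every cross term in $\Ad(\exp(-y)\exp(-\U))$ applied to all four graded pieces of $\si^*\om$ together with the Maurer--Cartan terms, then translate the matrix brackets in $\frak{sp}(n+2,\mathbb{R})$ into abstract index formulas with consistent signs, honouring the convention $\delta^d{}_a=-\delta_a{}^d$ for raising and lowering with $J$ and recognising that a bare exterior derivative plus the appropriate connection term assembles into a covariant derivative.
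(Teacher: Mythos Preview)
Your proposal is correct and follows essentially the same route as the paper: both argue that the Weyl--structure machinery of \cite[Section 5.1]{parabook} transfers verbatim to the skeleton $(\fl,P,\Ad)$ because the relevant computations only involve $\Ad(P_+)$ acting on $\fl$ and the brackets $[\fp_i,\fl_j]$ with $i\geq 1$, never $[\fl_{-1},\fl_{-1}]$. The paper simply cites \cite{parabook} and asserts this; you spell out the gauge--change formula $\hat\si^*\om=\Ad(\Phi^{-1})\si^*\om+\Phi^*\om_P$ and indicate how each graded component arises, which is exactly the content of the cited propositions rewritten in the present notation.
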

\begin{proof}
The proof of the Claim (1) \cite[Proposition 5.1.1]{parabook} in the case of parabolic geometries can be directly used to prove our Claim (1). For the proof of the remaining claims, we can use the relevant proofs of Propositions in \cite[Section 5.1]{parabook}, because these require only the information provided by the skeleton. Therefore $\si^*\om_-, \nabla_a\nu^b, \Rho_1, \Rho_2$ transform according to the same transformation formulas as in the case of parabolic geometries and we just rewrite them using our notations and conventions. In particular, $\si^*\om_-$ does not depend on the choice of the Weyl structure and thus $\iota$ is clearly induced by the identification of $\fl_{-1}$ with $\mathbb{R}^n$. Then it is obvious that $T^*M\cong \ba_0\times_{CSp(n,\mathbb{R})} \fp_1$ and the identification $\ell\cong \ba_0\times_{CSp(n,\mathbb{R})} \fp_2$ follows from the Claim (2).
\end{proof}

We see from the formula for the change of the Weyl connections that they all share the same geodesics (up to parametrization) and thus they define an underlying ACAF--structure.

\begin{prop}
For each Cartan geometry $(\ba\to M,\om)$ of type $(\fl,P,\Ad)$, there is a unique ACAF--structure $(M,\ell,[D])$ such that $\ell$ is the $CSp(n,\mathbb{R})$--structure $\ba_0$ and $[D]$ is the projective structure that shares the geodesics (up to parametrization) with some (and thus each) of the Weyl connections.

This assignment is a faithful functor from the category of Cartan geometries of type $(\fl,P,\Ad)$ to the category of ACAF--structures, i.e., if $\phi_1,\phi_2$ are two morphism between the Cartan geometries of type $(\fl,P,\Ad)$, which coincide as ACAF--morphisms, then $\phi_1=\phi_2$.
\end{prop}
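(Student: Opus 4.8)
The plan is to construct the ACAF--structure directly from the data already extracted by the preceding proposition and then check functoriality. First I would take any global Weyl structure $\si$ (which exists by Claim (1) of the previous proposition), form the induced Weyl connection $\nabla = \si^*\om_0$ on $\ba_0 \to M$, and observe that it is an ACS--connection since $\ba_0$ is a $CSp(n,\mathbb{R})$--structure. The transformation formula in Claim (4), namely $\hat\nabla_a\nu^d = \nabla_a\nu^d + (\delta_a{}^d\U_b + \U_a\delta_b{}^d + \U^d J_{ab})\nu^b$, is exactly the symmetric-part change occurring in the second part of Theorem \ref{thm1} (with $s_a = 0$), so all Weyl connections are ACS--connections sharing geodesics up to parametrization; hence they determine a single projective class $[D]$ on $M$. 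Together with $\ell$ this gives a triple $(M,\ell,[D])$, which by Theorem \ref{3.4} (equivalently by Proposition \ref{propconconst}) is an ACAF--structure, and $[D]$ is independent of the choice of $\si$ because changing $\si$ only changes $\nabla$ within its projective class. Uniqueness of $[D]$ is clear since any ACAF--structure with the prescribed $\ell$ and the prescribed geodesic behaviour of the Weyl connections must have this $[D]$.

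Next I would verify that the assignment is functorial. A morphism $\phi \colon (\ba \to M, \om) \to (\ba' \to M', \om')$ is a principal bundle map covering some $f \colon M \to M'$ with $\phi^*\om' = \om$. Since $\exp(\fp_1)\exp(\fp_2)$ is a normal... rather, since $P = CSp(n,\mathbb{R})\exp(\fp_1)\exp(\fp_2)$ and $\phi$ is $P$--equivariant, $\phi$ descends to a $CSp(n,\mathbb{R})$--bundle map $\phi_0 \colon \ba_0 \to \ba_0'$ covering $f$; compatibility with the soldering forms (Claim (2)) forces $f^*\ell' = \ell$, so $f$ is an ACS--morphism. Pulling back a Weyl structure $\si'$ of the target along $\phi$ (in the appropriate sense, using $\phi_0$) yields a Weyl structure on the source whose Weyl connection is $f^*$(Weyl connection of $\si'$); since that target Weyl connection shares geodesics with $[D']$ and pullback preserves geodesics, $f$ is a projective morphism $[D] \to [D']$. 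Thus $f$ underlies an ACAF--morphism, and composition is clearly respected.

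For faithfulness, suppose $\phi_1, \phi_2 \colon (\ba,\om) \to (\ba',\om')$ induce the same ACAF--morphism, hence cover the same $f$. The hard part will be to promote the agreement of the underlying maps on $M$ to agreement of the bundle maps on $\ba$. The standard mechanism (as for parabolic geometries) is that $\phi_1$ and $\phi_2$ induce the same $CSp(n,\mathbb{R})$--bundle map $\ba_0 \to \ba_0'$: indeed each $\phi_i$ determines, via comparing $\phi_i$ to a fixed Weyl structure, a section of $T^*M \oplus \ell$ measuring its "Rho--type" deviation, but the condition that $\phi_1,\phi_2$ agree as ACAF--morphisms pins down the soldering form and Weyl connection parts of $\phi_i^*\om'$, and then the normalization/equivariance of $\om'$ together with $\phi_i^*\om' = \om$ forces the $\fp_1 \oplus \fp_2$--parts to coincide as well. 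Concretely, two principal $P$--bundle maps covering the same base map and pulling back $\om'$ to $\om$ differ by a smooth map $\ba \to P$; the condition $\phi_i^*\om' = \om$ restricts this map to take values in the subgroup acting trivially on $\fl_{-1}$ and on the Weyl connection, which — because $\om'$ reproduces the generators of fundamental vector fields — is forced to be the identity. I expect this last reduction to be the only delicate point; everything else is a direct translation of Weyl--structure bookkeeping from \cite[Section 5.1]{parabook} into the present setting, valid because it uses only the skeleton data.
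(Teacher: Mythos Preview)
Your construction of the underlying ACAF--structure and the verification that morphisms of Cartan geometries descend to ACAF--morphisms match the paper's argument essentially line for line: both rely on the Weyl--structure proposition immediately preceding the statement, and both read off the projective class from the transformation formula for Weyl connections.

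The difference is in the faithfulness argument. The paper does not analyse the $\fp_1\oplus\fp_2$--deviation of the two morphisms by hand. Instead it forms the (locally defined) composition $\phi=\phi_1^{-1}\phi_2$, observes that by hypothesis $\phi$ covers the identity on $M$, and then invokes Proposition~\ref{rigid} from Appendix~\ref{ApS}: for a skeleton $(\fg,H,\Ad)$ whose maximal normal subgroup $N\subset H$ with $\Ad(N)\fg\subset\frak{n}$ is trivial, any morphism of Cartan geometries covering the identity is the identity. This dispatches faithfulness in one line.

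Your hand--computation is headed toward a special--case proof of exactly that rigidity statement, and the outline is plausible, but you leave the key step (``the subgroup acting trivially on $\fl_{-1}$ and on the Weyl connection \dots\ is forced to be the identity'') unproved and flag it yourself as the delicate point. To complete your route you would have to argue that an element $p\in P$ with $\Ad(p)|_{\fl}=\id$ is the identity, which is precisely the effectivity hypothesis of Proposition~\ref{rigid}; at that point you might as well cite the proposition. So nothing in your sketch is wrong, but the paper's route is shorter and avoids the bookkeeping you anticipate.
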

\begin{proof}
The above results imply that it remains to deal with the morphisms. Clearly each morphism of the Cartan geometries of type $(\fl,P,\Ad)$ preserves $\si^*\om_-$ and thus it is an ACS--morphism. Secondly, a morphism of Cartan geometries of type $(\fl,P,\Ad)$ maps Weyl connections onto Weyl connections and thus it preserves their geodesics (up to parametrization), i.e., it is a projective morphism. To prove the faithfulness, we consider the composition $\phi=\phi_1^{-1}\phi_2$, which is a locally defined automorphism of a Cartan geometry of type $(\fl,P,\Ad)$. Since $\phi$ acts as $\id$ on the projective structure by assumption, the underlying diffeomorphism on $M$ is $\id$ and $\phi=\id$ follows from the Proposition \ref{rigid}.
\end{proof}

Let us recall the well--known relation (cf. \cite[Section 1.6.1]{parabook}) between the linear connections on $G$--structures and the Cartan geometries of type $(\mathbb{R}^n\rtimes G,G)$. This means that there is an equivalence of categories between the category of  ACAF--structures and the category of Cartan geometries $\om^0$ on $\ba_0$ of type $(\mathbb{R}^n\rtimes CSp(n,\mathbb{R}),CSp(n,\mathbb{R}))$ with totally trace--free torsion. 

Let us start proving the Theorem \ref{1.6}. We consider the extension functors from \cite[Theorem 1.5.15]{parabook} and \cite[Theorem 1.3]{ja-arx}. It is a simple observation that the inclusion $CSp(n,\mathbb{R})\subset P$ and $\mathbb{R}^n\oplus \frak{csp}(n,\mathbb{R})\subset \fl$ given by identification $\mathbb{R}^n=\fl_{-1}$ satisfy all the conditions of \cite[Theorem 1.5.15]{parabook} and \cite[Theorem 1.3]{ja-arx} and thus there is an extension functor $\mathcal{F}$ from the category of Cartan geometries of type $(\mathbb{R}^n\rtimes CSp(n,\mathbb{R}),CSp(n,\mathbb{R}))$ to the category of Cartan geometries of type $(\fl,P,\Ad)$. Let us recall that for a Cartan geometry $(\ba_0\to M,\om^0)$ of type $(\mathbb{R}^n\rtimes CSp(n,\mathbb{R}),CSp(n,\mathbb{R}))$ and the morphism $\phi_0$ from the Cartan geometry $(\ba_0\to M,\om^0)$, the following holds:
$$\mathcal{F}(\ba_0\to M)=\ba_0\times_{CSp(n,\mathbb{R})}P\to M$$
$$\mathcal{F}(\om^0)|_{T_{(u_0,e)}\mathcal{F}(\ba_0)}=\om^0(u_0)+\om_P(e)$$
$$\mathcal{F}(\phi_0)(u_0,e)=(\phi_0(u_0),e),$$
where $\om_P$ is the Maurer--Cartan form on $P$ and $(u_0,e)$ is the class in $\ba_0\times_{CSp(n,\mathbb{R})}P$ of the point $u_0\in \ba_0$ and the identity element $e\in P$.

To finish the proof of the Theorem \ref{1.6}, it remains to check that the image of $\mathcal{F}$ is characterized by the claimed normalization condition.

\begin{lem}
A Cartan geometry $(\ba\to M,\om)$ of type $(\fl,P,\Ad)$ with the underlying Cartan geometry $(\ba_0\to M,\om^0)$ of type $(\mathbb{R}^n\rtimes CSp(n,\mathbb{R}),CSp(n,\mathbb{R}))$ with totally trace--free torsion is isomorphic to $\mathcal{F}(\ba_0\to M,\om^0)$ if and only if there is a (unique) Weyl structure $\si^0: \ba_0\to \ba$ such that
\begin{enumerate}
\item the Weyl connection of $\si^0$ has totally trace--free torsion, and
\item the Rho--tensors of $\si^0$ vanish identically.
\end{enumerate}
\end{lem}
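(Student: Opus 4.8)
The plan is to verify the two directions separately, using the explicit description of the extension functor $\mathcal{F}$ recalled just above the statement together with the transformation formulas for Weyl structures from the previous Proposition.

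For the ``only if'' direction, suppose $(\ba\to M,\om)=\mathcal{F}(\ba_0\to M,\om^0)$ with $\om^0$ having totally trace--free torsion. Then $\ba=\ba_0\times_{CSp(n,\mathbb{R})}P$ and there is an obvious $CSp(n,\mathbb{R})$--equivariant section $\si^0:\ba_0\to\ba$, namely $u_0\mapsto (u_0,e)$. From the formula $\mathcal{F}(\om^0)|_{T_{(u_0,e)}\mathcal{F}(\ba_0)}=\om^0(u_0)+\om_P(e)$ and the fact that $\om_P(e)$ vanishes on the image of $T_{u_0}\ba_0$ under the inclusion, I compute that $(\si^0)^*\mathcal{F}(\om^0)$ is literally $\om^0$, viewed through the identification $\mathbb{R}^n\oplus\frak{csp}(n,\mathbb{R})=\fl_{-1}\oplus\fp_0\subset\fl$. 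Hence the $\fl_{-1}$--component of $(\si^0)^*\om$ is the soldering form coming from $\om^0$, the $\fp_0$--component is precisely the linear connection $\nabla^0$ associated to $\om^0$ (so its torsion is totally trace--free by hypothesis, giving (1)), and the $\fp_1\oplus\fp_2$--components are zero because $\om^0$ has no values there (giving (2)). This $\si^0$ is the unique Weyl structure with vanishing Rho--tensors: by part (5) of the Proposition, changing $\si^0$ by a section $\U_a+yJ_{ab}$ of $T^*M\oplus\ell$ changes $\hat\Rho_{ab}$ to $\nabla^0_a\U_b-\U_a\U_b+yJ_{ab}$, whose symmetric trace--free part forces $\U_a$ to solve an overdetermined first--order system; together with the trace and skew parts one gets $\U_a=0$ and then $y=0$, so the Weyl structure with vanishing Rho is unique.

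For the ``if'' direction, suppose $(\ba\to M,\om)$ admits a Weyl structure $\si^0$ satisfying (1) and (2). The underlying Cartan geometry $(\ba_0\to M,\om^0)$ of type $(\mathbb{R}^n\rtimes CSp(n,\mathbb{R}),CSp(n,\mathbb{R}))$ is built from the soldering form $(\si^0)^*\om_-$ and the Weyl connection $(\si^0)^*\om_0$, i.e.\ it is the Cartan geometry corresponding to the linear connection $\nabla^0$ of $\si^0$, which by (1) has totally trace--free torsion. I then construct an isomorphism $\Phi:\mathcal{F}(\ba_0)\to\ba$ by $\Phi(u_0,p):=\si^0(u_0)\cdot p$ for $p\in P$; this is well--defined and $P$--equivariant by $CSp(n,\mathbb{R})$--equivariance of $\si^0$, and it covers $\id_M$. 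It remains to check $\Phi^*\om=\mathcal{F}(\om^0)$. On the submanifold $\{p=e\}$ this is exactly the identity $(\si^0)^*\om=\om^0$ repackaged, using that the Rho--tensors vanish by (2) so that $(\si^0)^*\om$ has no $\fp_1\oplus\fp_2$--component; and along the $P$--directions both sides agree with the Maurer--Cartan form of $P$ by equivariance of $\om$ and the defining property of $\mathcal{F}(\om^0)$. Since a $P$--equivariant bundle map intertwining the soldering and Maurer--Cartan pieces automatically intertwines the full Cartan connections (the remaining components are determined by equivariance from their values on $\{p=e\}$), this gives $\Phi^*\om=\mathcal{F}(\om^0)$.

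The main obstacle I anticipate is the uniqueness of $\si^0$ and, dually, showing that the Weyl structure with vanishing Rho actually exists rather than merely being unique; this is where the normalization condition really does its work. Concretely, one must argue that the map $\si\mapsto(\si^*\om_1,\si^*\om_2)$ on the affine space of Weyl structures hits $0$ exactly once, which requires inverting the inhomogeneous part of the transformation rule in (5). The cleanest route is to observe that, because $(\ba\to M,\om)$ already comes from an ACAF--structure via its underlying $\om^0$ (the soldering form and Weyl connection are unconstrained data plus the torsion normalization), the extension $\mathcal{F}(\ba_0)$ provides such a Weyl structure, so existence is free once the ``only if'' direction is in hand; uniqueness is then the overdetermined--system argument above. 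The rest is bookkeeping with the grading of $\frak{sp}(n+2,\mathbb{R})$ and the identifications $T^*M\cong\ba_0\times_{CSp(n,\mathbb{R})}\fp_1$, $\ell\cong\ba_0\times_{CSp(n,\mathbb{R})}\fp_2$ already established.
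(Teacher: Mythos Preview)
Your two main directions match the paper's proof essentially verbatim: construct $\si^0$ as $u_0\mapsto(u_0,e)$ for the ``only if'' direction, and build the isomorphism $\Phi(u_0,p)=\si^0(u_0)\cdot p$ for the ``if'' direction. The verification that $\Phi^*\om=\mathcal{F}(\om^0)$ is more detailed than what the paper writes, but correct.

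The gap is in your uniqueness argument. You try to extract $\U_a=0$ from condition (2) alone, arguing that the equation $\nabla^0_a\U_b-\U_a\U_b+yJ_{ab}=0$ is overdetermined and hence forces $\U_a=0$. That does not follow: this is a prolongation-type equation (compare the projective metrizability or Ricci-flatness equations) and can admit nontrivial solutions on particular geometries. The correct route uses condition (1). From part (4) of the Proposition, changing the Weyl structure by $\U_a$ changes the Weyl connection by the tensor $\delta_a{}^d\U_b+\U_a\delta_b{}^d+\U^dJ_{ab}$, whose antisymmetrization in $a,b$ is $2J_{ab}\U^d$; contracting $b$ with $d$ gives a trace $-2\U_a$. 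Hence if both the old and the new Weyl connection have totally trace--free torsion, then $\U_a=0$. With $\U_a=0$ the transformation of $\Rho_{ab}$ reduces to $\hat\Rho_{ab}=yJ_{ab}$, so condition (2) forces $y=0$. This is also consistent with the paper's earlier uniqueness statement for $\nabla^0$ in Proposition~\ref{propconconst}.

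Your final paragraph about existence being ``the main obstacle'' is off target: in the ``if'' direction existence of $\si^0$ is hypothesized, and in the ``only if'' direction you have already exhibited it explicitly. No further work is needed there.
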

\begin{proof}
It is clear that the map $\ba_0\to \mathcal{F}(\ba_0)$ defined as $u_0\mapsto (u_0,e)$ is a Weyl structure with the same properties as $\si^0$. Thus if $\si^0$ has the claimed properties, then the map $\si^0(u_0)p\mapsto (u_0,p)$ for all $u_0\in \ba_0$ and $p\in P$ is clearly an isomorphism of Cartan geometries $(\ba\to M,\om)$ and $\mathcal{F}(\ba_0\to M,\om^0)$.

Conversely, suppose $\phi: \mathcal{F}(\ba_0)\to \ba$ is an isomorphism of Cartan geometries. Then $\si^0(u_0)=\phi(u_0,e)$ is a Weyl structure on $(\ba\to M,\om)$. We can assume that $\phi$ covers identity on $M$. Thus $(\phi^{-1})^*\si^0$ is the Weyl structure $u_0\mapsto (u_0,e)$ and the Weyl connections and the Rho--tensors of $\si^0$ are pull--backs by the identity map, i.e., they have the claimed properties.
\end{proof}

\section{BGG--machinery for Cartan geometries of type $(\fl,P,\Ad)$}\label{sec5}

In this section, we adapt the construction of the BGG--sequences from \cite{CSou} for the Cartan geometries of type $(\fl,P,\Ad)$. This will be done in two steps. 

In the first step, we construct objects with the properties that are necessary for the construction from \cite{CSou}. In particular, we need to find analogies of tractor bundles, tractor bundle valued forms on which we can find algebraic differentials and codifferentials. The reason is that the construction from \cite{CSou} can not be applied to the direct generalization of the setting in the case of parabolic geometries to our situation. On the other hand, we will find that in fact, there are more possible settings in which the construction from \cite{CSou} can be applied.

In the second step, we show that in the setting we find in the first step, we can directly construct the BGG--like sequences by applying the construction of the BGG--sequences from \cite{CSou}.

We illustrate, how these steps look like in a particular setting on an example in Appendix \ref{ApC}. Let us note that many of the results in the following sections can be directly obtained from this example using the representation theory.

We consider a Cartan geometry $(\ba\to M,\om)$ of type $(\fl,P,\Ad)$ with a distinguished Weyl structure $\si^0$, without assuming any relation of $\si^0$ with the underlying ACAF--structures or imposing any normalization conditions. Nevertheless, the splittings provided by Weyl structure $\si^0$ remain natural and invariant w.r.t. all automorphisms, which allows us to extend each $CSp(n,\mathbb{R})$--invariant object to a $P$--invariant object. This is the main reason we get more possible settings for the BGG--machinery.

\subsection{Setting the BGG--machinery}

We consider the following generalizations of tractor bundles and tractor valued forms.

\begin{def*}
Let $\tau: Sp(n+2,\mathbb{R})\to GL(W)$ be a representation of $Sp(n+2,\mathbb{R})$ on a vector space $W$ and let $V\subset W$ be a $\tau(P)$--invariant subspace. We say that the associated bundle  $\ba\times_P W$ is \emph{a tractor bundle} on the ACAF--structure and we say that the associated bundle $\ba\times_P V$ is \emph{a tractor--like bundle} on the ACAF--structure. 
\begin{enumerate}
\item In the space of $V$--valued, we fix a subspace
$$L^i(V):=\wedge^{i}_0(\fl/\fp)^*\otimes V+\wedge^{i}(\fl/\fp)^*\otimes Ker(d\tau(\fp_2)),$$
where $\wedge^{i}_0(\fl/\fp)^*$ is the space the trace--free forms and $Ker(d\tau(\fp_2))$ is the subspace of $V$ annihilated by action $d\tau(X)$ for all $X\in \fp_2$.
\item In the space of $V[2]$--valued, we fix a subspace
$$L^{i}(V[2]):=\wedge^{i}_0(\fl/\fp)^*\otimes V[2]+\wedge^{i}(\fl/\fp)^*\otimes Ker(d\tau(\fp_2))[2].$$
\item We combine the spaces of all $W$--valued and $W[2]$--valued forms into the space
$$\wedge^i \fp_+\otimes W$$
using the $CSp(n,\mathbb{R})$--module isomorphism
$$\fp_1\oplus \fp_2\cong \fp_+:= (\fl/\fp)^*\oplus \mathbb{R}[2].$$
\end{enumerate}
We use the notation $\Omega^i(L^i(V)),\Omega^i(L^i(V[2])),\Omega^i(W),$ for the spaces of sections of $\ba\times_P L^i(V)$, $\ba\times_P L^i(V[2])$ or $\ba\times_P \wedge^i(\fp_+\otimes W)$. We write just $\Omega^i$ for these spaces in the case it is not necessary to distinguish between them.
\end{def*}

Note that as the $P$--modules $L^i(V), L^i(V[2]),\wedge^i(\fp_+\otimes W)$ are filtered (via action of $d\tau(\fp_+)$) and the Weyl structure $\si^0$ provides splitting of each of the filtrations, that is, an isomorphism with the associated grading. In particular, each bundle $\Omega^i$ is naturally filtered.

The next ingredients for the construction of the BGG--sequences in \cite{CSou} are $CSp(n,\mathbb{R})$--equivariant Lie algebra differentials $\partial: \wedge^i \fp_+\otimes W\to \wedge^{i+1} \fp_+\otimes W$ and $P$--equivariant Konstant's codifferentials $\partial^*: \wedge^i \fp_+\otimes W\to \wedge^{i-1} \fp_+\otimes W$. Let us recall from \cite[Section 3.1.10]{parabook} that $\partial$ and $\partial^*$ are defined by the formulas
\begin{align*}
(\partial\phi)&((X_1+x_1),\dots, (X_{i+1}+x_{i+1})):=\sum_k (-1)^k d\tau(X_k+x_k)\phi((X_1+x_1),\hat\dots, \\
&(X_{i+1}+x_{i+1}))+\sum_{k<j}(-1)^{k+j}\phi(\{X_k,X_j\},(X_1+x_1),\hat\dots, (X_{i+1}+x_{i+1})),
\end{align*}
\begin{align*}
\partial^*&((Z_1+z_1)\wedge\dots\wedge (Z_i+z_i)\otimes v):=\sum_k (-1)^k (Z_1+z_1)\wedge\hat\dots\wedge (Z_i+z_i)\otimes\\
&d\tau(Z_k+z_k)v+\sum_{k<j}(-1)^{k+j}[Z_k,Z_j]\wedge (Z_1+z_1)\wedge\hat \dots\wedge (Z_i+z_i) \otimes v 
\end{align*}
for $\phi\in \wedge^i \fl_{-1}^*\otimes W\oplus \wedge^i \fl_{-1}^*\otimes \mathbb{R}[-2]^*\otimes W, X_k\in \fl_{-1}\cong \fp_+^*, x_k\in \mathbb{R}[-2]\cong \fp_2^*,Z_k\in \fp_1,z_k\in \fp_2, v\in W$, where $\{.,.\}: \wedge^2\fl_{-1}^*\otimes \mathbb{R}[-2]$ is dual to bracket $[.,.]:\wedge^2 \fp_1^*\otimes \fp_2$ and $\hat \dots$ always means that we omit  in the sequence the elements $Z_l+z_l$ for all summation indices $l$.

We consider $V$--valued forms and denote by $\iota$ and $r$ the inclusions and projections of $V$--valued forms into $\wedge^i(\fp_+\otimes W)$, which exist, because $\fp_1\cong (\fl_{-1})^*\cong (\fl/\fp)^*$ as $CSp(n,\mathbb{R})$--modules and the representations of $CSp(n,\mathbb{R})$ on $W$ induced by $\tau$ are always completely reducible, i.e., there is $CSp(n,\mathbb{R})$--invariant complement of $V$ in $W$. Then we get the following diagram of $CSp(n,\mathbb{R})$--equivariant maps
\begin{displaymath}
\xymatrix{
0& W \ar[l]_{\partial^*} \ar[d]_{r}& \fp_+\otimes W \ar[l]_{\partial^*}\ar[d]_{r}& \dots\ar[l]_{\partial^*}\ar[d]_{r} & \wedge^n\fp_+\otimes W \ar[l]_{\partial^*}\ar[d]_{r}& \wedge^{n+1}\fp_+\otimes W \ar[l]_{\partial^*} \\
0& V \ar[l]_{\partial^*_0} \ar@/_/[u]_{\iota}& (\fl/\fp)^*\otimes V \ar[l]_{\partial^*_0}\ar@/_/[u]_{\iota}& \dots\ar[l]_{\partial^*_0}\ar@/_/[u]_{\iota} & \wedge^n(\fl/\fp)^*\otimes V \ar[l]_{\partial^*_0}\ar@/_/[u]_{\iota}& 0 \ar[l]\\
}
\end{displaymath}
where $$\partial^*_0:=r\circ \partial^*\circ \iota.$$ This means that
\begin{align*}
\iota\circ \partial^*_0\circ r((Z_1+z_1)\wedge\dots\wedge (Z_i+z_i)\otimes v)&=\sum_k (-1)^k Z_1\wedge\hat \dots\wedge Z_i\otimes d\tau(Z_k)v,\\
\end{align*}
\begin{align*}
(\partial^*_0)^2(Z_1\wedge\dots\wedge Z_i\otimes v)&=r\circ (\iota\circ \partial^*_0\circ r-\partial^*)(\partial^*\circ \iota(Z_1\wedge\dots\wedge Z_i\otimes v))\\
&=\sum_{k<j}(-1)^{k+j+1}Z_1\wedge\hat\dots\wedge Z_i \otimes d\tau([Z_k,Z_j]) v
\end{align*}
and
$$((\partial^*_0)^2\phi)_{j_1\dots j_i}=d\tau(\begin{pmatrix}
0& 0&-(i+1)(i+2)\\
0&0&0\\
0&0&0\\
\end{pmatrix})(\phi_{j_1\dots j_ia}{}^a).$$
holds for a $V$--valued form $\phi$. This means that $\partial^*_0$ is not a codifferential on the space of $V$--valued forms, but only on the space $L^i(V)$. We consider the $CSp(n,\mathbb{R})$--equivariant inclusion $$\iota_0: L^i(V)\to \wedge^i (\fl/\fp)^*\otimes V$$ and projections $$r_0: \wedge^i (\fl/\fp)^*\otimes V\to L^i(V),$$
where the projections $r_0$ are given by the $CSp(n,\mathbb{R})$--invariant complements of $Ker(d\tau(\fp_2))$ in $\wedge^i (\fl/\fp)^*\otimes V$ given by the trace components.

\begin{lem}
All maps $$\partial^*:=r_0\circ \partial^*_0\circ \iota_0=r_0\circ r\circ \partial^*\circ \iota \circ \iota_0$$ are codifferentials $L^{i+1}(V)\to L^{i}(V)$. There are codifferentials $\partial^*: L^{i+1}(V[2])\to L^{i}(V[2])$ constructed in the analogous way by using inclusion $\fp_2\otimes\wedge^i \fp_+\otimes W\subset \wedge^{i+1} \fp_+\otimes W$. 
\end{lem}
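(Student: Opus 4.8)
The plan is to deduce the claim from two facts that are already in hand: Kostant's codifferential $\partial^*$ on $\wedge^\bullet\fp_+\otimes W$ satisfies $(\partial^*)^2=0$ (cf. \cite[Section 3.1.10]{parabook}), and $(\partial^*_0)^2$ is given by the identity displayed just above the lemma, i.e. $(\partial^*_0)^2$ is $d\tau$ of a generator of $\fp_2$ composed with the trace $\phi\mapsto\phi_{\dots a}{}^a$. That the maps $\partial^*=r_0\circ r\circ\partial^*\circ\iota\circ\iota_0$ are well defined on $L^{i+1}(V)$, land in $L^i(V)$, and are $CSp(n,\mathbb{R})$--equivariant is immediate, since $\iota,r,\iota_0,r_0$ are $CSp(n,\mathbb{R})$--equivariant (all the complements used to define them are $CSp(n,\mathbb{R})$--invariant, by complete reducibility of the $CSp(n,\mathbb{R})$--modules involved) and $r_0$ maps onto $L^i(V)$; and $CSp(n,\mathbb{R})$--equivariance is all that is needed, because via $\si^0$ we identify $\ba\times_P L^i(V)=\ba_0\times_{CSp(n,\mathbb{R})}L^i(V)$. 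So the only content is $(\partial^*)^2=0$.

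The first step I would carry out is to show that $\partial^*_0$ already maps $L^{i+1}(V)$ into $L^i(V)$. Since $d\tau(\fp_+)V\subseteq V$ (as $V$ is $\tau(P)$--invariant) and $[\fp_1,\fp_1]\subseteq\fp_2$, on a form valued in $\wedge^{i+1}\fp_1\otimes V$ the map $\partial^*_0=r\circ\partial^*\circ\iota$ is simply the Koszul--type operator $Z_1\wedge\dots\wedge Z_{i+1}\otimes v\mapsto\sum_k(-1)^kZ_1\wedge\hat\dots\wedge Z_{i+1}\otimes d\tau(Z_k)v$ (the bracket terms of $\partial^*$ land in $\fp_2\wedge\wedge^{i-1}\fp_1\otimes V$ and are killed by $r$). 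For $\phi$ valued in $Ker(d\tau(\fp_2))$ this value stays in $Ker(d\tau(\fp_2))$, because $[\fp_1,\fp_2]=0$ gives $d\tau(\fp_2)d\tau(\fp_1)=d\tau(\fp_1)d\tau(\fp_2)$; hence $\partial^*_0\phi\in\wedge^i\fp_1\otimes Ker(d\tau(\fp_2))\subseteq L^i(V)$. For $\phi$ a trace--free form valued in $V$, writing $\Lambda$ for the contraction of two form--slots with $J^{bc}$, a short bookkeeping shows $\Lambda\circ\partial^*_0=\partial^*_0\circ\Lambda$: both operators delete wedge factors from disjoint subsets of slots ($\Lambda$ pairing two of them via $J^{bc}$, $\partial^*_0$ deleting one and acting by $d\tau$), so they commute; thus $\Lambda(\partial^*_0\phi)=\partial^*_0(\Lambda\phi)=0$ and $\partial^*_0\phi$ is again trace--free, i.e. $\partial^*_0\phi\in\wedge^i_0(\fl/\fp)^*\otimes V\subseteq L^i(V)$. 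As $L^{i+1}(V)$ is the sum of these two subspaces, $\partial^*_0(L^{i+1}(V))\subseteq L^i(V)$ follows.

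Next, since $\wedge^i(\fl/\fp)^*\otimes V$ splits $CSp(n,\mathbb{R})$--invariantly as $L^i(V)$ plus the trace forms valued in a $CSp(n,\mathbb{R})$--invariant complement of $Ker(d\tau(\fp_2))$, and $L^i(V)$ meets that complement trivially, the projection $r_0$ restricts to the identity on $L^i(V)$ and $\iota_0$ is the inclusion; combined with the previous step this gives $r_0\circ\partial^*_0\circ\iota_0=\partial^*_0$ on $L^\bullet(V)$, so $(\partial^*)^2=(\partial^*_0)^2|_{L^{i+1}(V)}$. Applying the displayed formula for $(\partial^*_0)^2$: on $\wedge^{i+1}_0(\fl/\fp)^*\otimes V$ the trace $\phi_{\dots a}{}^a$ vanishes, and on $\wedge^{i+1}(\fl/\fp)^*\otimes Ker(d\tau(\fp_2))$ the trace is still $Ker(d\tau(\fp_2))$--valued, hence annihilated by $d\tau(\fp_2)$. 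In both cases $(\partial^*_0)^2\phi=0$, so $(\partial^*)^2=0$ and $\partial^*$ is a codifferential. The $V[2]$--case then runs verbatim, with the only change that $V[2]$--valued $i$--forms are included into $\wedge^{i+1}\fp_+\otimes W$ as $\fp_2\otimes\wedge^i\fp_+\otimes W$ (using $\fp_2\cong\mathbb{R}[2]$) rather than as $\wedge^i\fp_1\otimes V$; the relations $[\fp_1,\fp_1]\subseteq\fp_2$, $[\fp_1,\fp_2]=[\fp_2,\fp_2]=0$, the commutation $[\Lambda,\partial^*_0]=0$, and the $d\tau(\fp_2)$--of--a--trace description of $(\partial^*_0)^2$ are all unchanged.

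I expect the only non--formal point to be the first step — that $\partial^*_0$ preserves $L^\bullet(V)$, and inside it the identity $[\Lambda,\partial^*_0]=0$. This is where the special shape of the contact grading is used: the bracket $\wedge^2\fp_1\to\fp_2$ is, up to scale, the form $J$, so the same $J$ governs both the Kostant square $(\partial^*_0)^2$ and the trace decomposition defining $\wedge^\bullet_0(\fl/\fp)^*$. The subtlety to watch is that an incautious index computation which silently raises an $\fp_1$--index to an $\fl_{-1}$--index with $J$ would manufacture a spurious term $d\tau(\fl_{-1})W$ in the commutator; working with the intrinsic ``delete a wedge factor and act'' description of $\partial^*_0$ avoids this, since that description never applies $d\tau$ to an element of $\fl_{-1}$. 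Once $\partial^*_0(L^{i+1}(V))\subseteq L^i(V)$ is established, the rest is bookkeeping built on the formula for $(\partial^*_0)^2$ that is already available.
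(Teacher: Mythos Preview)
Your argument is correct and follows the line the paper has in mind: the lemma is stated immediately after the displayed formula for $(\partial^*_0)^2$ and is left without proof, the intended reasoning being exactly that $(\partial^*_0)^2$ is $d\tau(\fp_2)$ applied to a trace, hence vanishes on $L^\bullet(V)$. Your additional step---showing $\partial^*_0(L^{i+1}(V))\subseteq L^i(V)$ via $[\fp_1,\fp_2]=0$ and $[\Lambda,\partial^*_0]=0$---is not spelled out in the paper but is precisely what is needed to pass from $(\partial^*_0)^2|_{L^\bullet(V)}=0$ to $(r_0\circ\partial^*_0\circ\iota_0)^2=0$, so it fills a gap the paper leaves to the reader.
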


On $\wedge^i \fp_+\otimes W$, the differential $\partial$ is related to $\partial^*$ by dualizing $\fp_+\to \fp_+^*$ and then dualizing the formula using Killing form on $\frak{sp}(n+2,\mathbb{R})$. This duality  provides the formula
\begin{align*}
(\partial_0 \phi)&(X_1, \dots, X_{i+1}):=r(\sum_k (-1)^{k+1} d\tau(X_k) \phi(X_1,\hat\dots, X_{i+1})),
\end{align*}
where $X_k\in \fl_{-1}$. As in the case of $\partial^*_0$, this is not a differential on the space of all $V$--valued forms. The following is obtained by a computation analogous to computation of $(\partial^*_0)^2.$

\begin{lem}\label{5.2}
All maps $$\partial:=r_0\circ \partial_0\circ \iota_0=r_0\circ r\circ \partial\circ \iota \circ \iota_0$$ are differentials $L^{i}(V)\to L^{i+1}(V)$. There are differentials $\partial: L^{i}(V[2])\to L^{i+1}(V[2])$ constructed in the analogous way by using inclusion $\fp_2\otimes\wedge^i \fp_+\otimes W\subset \wedge^{i+1} \fp_+\otimes W$. 
\end{lem}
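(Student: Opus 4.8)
The plan is to prove that $\partial\circ\partial=0$ on $L^i(V)$ and, in the twisted case, on $L^i(V[2])$; since by its very definition $\partial=r_0\circ r\circ\partial\circ\iota\circ\iota_0$ already lands in $L^{i+1}(V)$, this is exactly what is needed for $\partial$ to be a differential. The argument mirrors term by term the computation of $(\partial^*_0)^2$ carried out above, together with the Killing--form duality relating $\partial$ to $\partial^*$.

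First I would record three facts, all immediate from the definitions and from complete reducibility of the $CSp(n,\mathbb{R})$--representations induced by $\tau$. First, on the full complex $\wedge^\bullet \fp_+\otimes W$ the operator $\partial$ is the Chevalley--Eilenberg differential of the Heisenberg Lie algebra $\fp_+^*\cong \fl_{-1}\oplus\fg_{-2}$ with coefficients in $W$, so $\partial\circ\partial=0$ there. Second, since $r\circ\iota=\id$ and $r_0\circ\iota_0=\id$, the compositions $\iota\circ r$ and $\iota_0\circ r_0$ are the $CSp(n,\mathbb{R})$--equivariant projections onto $\Imm\iota$ and $\Imm\iota_0$, with complementary projections $q:=\id-\iota\circ r$ and $q_0:=\id-\iota_0\circ r_0$. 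Third, since $\partial_0=r\circ\partial\circ\iota$ and $\partial=r_0\circ\partial_0\circ\iota_0$, one gets $\partial_0\circ\partial_0=r\circ\partial\circ(\id-q)\circ\partial\circ\iota=-\,r\circ\partial\circ q\circ\partial\circ\iota$ and hence $\partial\circ\partial=r_0\circ(\partial_0\circ\partial_0)\circ\iota_0-r_0\circ\partial_0\circ q_0\circ\partial_0\circ\iota_0$.

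Next I would evaluate the curvature term $r\circ\partial\circ q\circ\partial\circ\iota$ on a $V$--valued $i$--form $\phi$ exactly as in the displayed computation of $(\partial^*_0)^2$: in the Killing--dualized formula for $\partial$, the only bracket contribution that survives the intermediate projection $q$ and the second application of $\partial$ is the one coming from the Heisenberg bracket $\{.,.\}$ on $\fl_{-1}$ valued in $\mathbb{R}[-2]\cong\fg_{-2}$. Tracking signs, one finds that $\partial_0\circ\partial_0$ acts on $\phi$ as $d\tau$ of a fixed element of the contact grading (the generator of $\fg_{-2}\cong\mathbb{R}[-2]$, carrying a combinatorial factor of the shape $(i+1)(i+2)$) post-composed with a single $J$--contraction of $\phi$ --- the statement dual to the displayed formula for $(\partial^*_0)^2$. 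Restricting along $\iota_0$ and projecting along $r_0$, this vanishes on $L^i(V)=\wedge^i_0(\fl/\fp)^*\otimes V+\wedge^i(\fl/\fp)^*\otimes Ker(d\tau(\fp_2))$: on the first summand the relevant $J$--contraction of a trace--free form is zero, while on the second summand the image of the $d\tau$--factor is annihilated. The term $r_0\circ\partial_0\circ q_0\circ\partial_0\circ\iota_0$ is disposed of by the same trace/weight bookkeeping, since $q_0$ projects onto $(\text{trace forms})\otimes\big(V/Ker(d\tau(\fp_2))\big)$. The twisted claim for $L^i(V[2])$ then follows verbatim upon replacing $\iota$ by the inclusion $\fp_2\otimes\wedge^i\fp_+\otimes W\subset\wedge^{i+1}\fp_+\otimes W$, exactly as in the preceding lemma on codifferentials.

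The main obstacle I expect is bookkeeping rather than anything conceptual: one must pin down precisely which graded components of $\partial\iota\phi$ survive $q$ --- namely those with a single $\fp_2$--slot and with values in the $CSp(n,\mathbb{R})$--invariant complement of $V$ --- and then verify that re-applying $\partial$ and projecting back isolates exactly the one invariant operator described above and nothing else. Because $L^i(V)$ is only a sum, not a direct sum, of its two defining subspaces, the vanishing must be verified coherently on all of $L^i(V)$ rather than summand by summand; once the $(\partial^*_0)^2$ computation has been transposed with the correct signs and numerical factors, the remainder is routine. One may alternatively deduce $\partial\circ\partial=0$ from $\partial^*\circ\partial^*=0$ by transposing with respect to the pairing induced by the Killing form, but keeping track of the pairing between $L^\bullet(V)$ and the analogous complex for the dual representation makes this no shorter in practice.
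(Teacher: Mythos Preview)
Your overall strategy---set up $\partial_0^2=-\,r\circ\partial\circ q\circ\partial\circ\iota$ and compute the obstruction exactly as the paper does for $(\partial^*_0)^2$---is the same as the paper's, which literally says only that the lemma ``is obtained by a computation analogous to computation of $(\partial^*_0)^2$.'' So the plan is right.

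The execution, however, has a concrete error in the dualization step. The displayed formula in the paper is
\[
((\partial^*_0)^2\phi)_{j_1\dots j_i}=d\tau(\text{element of }\fp_2)\bigl(\phi_{j_1\dots j_ia}{}^a\bigr),
\]
i.e.\ a $J$--\emph{trace} followed by $d\tau(\fp_2)$. Dualizing both pieces (Killing duality swaps $\fp_2\leftrightarrow\fg_{-2}$, and the adjoint of ``take a $J$--trace'' is ``wedge with $J$'') gives, up to the combinatorial constant,
\[
(\partial_0)^2\phi \;\sim\; J\wedge\bigl(d\tau(e_{-2})\phi\bigr),
\]
which \emph{raises} the form degree by two, as it must. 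You wrote that $(\partial_0)^2$ acts by $d\tau(e_{-2})$ ``post-composed with a single $J$--contraction of $\phi$''; that is the wrong direction. With the correct formula your vanishing argument on $L^i(V)$ no longer goes through as stated: on the summand $\wedge^i_0(\fl/\fp)^*\otimes V$, wedging with $J$ does \emph{not} kill a trace--free form; and on $\wedge^i(\fl/\fp)^*\otimes\mathrm{Ker}(d\tau(\fp_2))$, the operator $d\tau(e_{-2})$ does \emph{not} annihilate $\mathrm{Ker}(d\tau(\fp_2))$ (indeed $d\tau(e_2)d\tau(e_{-2})=d\tau([e_2,e_{-2}])$ there, which is generically nonzero). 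There is also, for $V\subsetneq W$, a second contribution to $(\partial_0)^2$ coming from the $V^\perp$--component of $d\tau(\fl_{-1})\phi$ (the analogue of your ``Part A''), which has no counterpart on the $\partial^*$ side because $\fp_1$ preserves $V$ while $\fl_{-1}$ does not; you mention it in your final paragraph but do not treat it in the argument.

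What actually has to be checked is that $r_0\circ(\partial_0)^2\circ\iota_0$ together with the $q_0$--correction lands in the kernel of $r_0$, i.e.\ in $(\text{trace part})\otimes(\text{complement of }\mathrm{Ker}(d\tau(\fp_2)))$; equivalently, one can run the duality you allude to at the end and transport $(\partial^*)^2=0$ across the pairing that identifies $L^\bullet(V)$ with the analogous space for the dual data. Either route requires more than the sentence you wrote for the two summands.
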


It is a simple consequence of the duality between $\partial^*$ and $\partial$ that $Im(\partial^*)$ is a complement of $Ker(\partial)$ and vice--versa. In particular, we can observe that analogously to the Hodge--decomposition $\wedge^i \fp_+\otimes W=Im(\partial^*)\oplus Ker(\partial^*)\cap Ker(\partial) \oplus Im(\partial)$, which is one of the ingredients of the construction in \cite{CSou}, we have decompositions $L^i(V)=Im(\partial^*)\oplus Ker(\partial^*)\cap Ker(\partial) \oplus Im(\partial)$ and $L^i(V[2])=Im(\partial^*)\oplus Ker(\partial^*)\cap Ker(\partial) \oplus Im(\partial)$. Let us emphasize that these three decompositions consist of different $CSp(n,\mathbb{R})$--modules and we write $\mathcal{H}^i(V),\mathcal{H}^i(V[2]),\mathcal{H}^i(W)$ for the spaces of sections of the cohomology bundles $\ba\times_P Ker(\partial^*)/Im(\partial^*)$. Let us emphasize that these are constructed as $\ba_0\times_{CSp(n,\mathbb{R})} Ker(\partial^*)/Im(\partial^*)$ in the splitting provided by the Weyl structure $\si^0$. We write just $\mathcal{H}^i$ for these three spaces in the case it is not necessary to distinguish between them.

\subsection{Running the BGG--machinery}

Now we can take the spaces of sections $\Omega^i$, $\mathcal{H}^i$ and the differentials and codifferentials $\partial, \partial^*$ we set up in the previous section and start running the BGG--machinery as in \cite{CSou}.

The final input for the construction in \cite{CSou} is a special class of differential operators between the bundles $\Omega^i$. We construct examples of such operators in the following section.

\begin{def*}
We call an operator $\mathcal{D}_i: \Omega^i\to \Omega^{i+1}$ compressable if it preserves the filtration of $\Omega^i$ and on the associated grading, the homogeneous part of degree zero of $\mathcal{D}_i$ coincides with $\partial$.
\end{def*}

As in \cite{CSou}, for the compressable operators $\mathcal{D}_i$, we find a unique splitting operator $\mathcal{L}_i(\mathcal{D}_i): \mathcal{H}^i\to \Omega^i$ with the properties
\begin{itemize}
\item $\partial^*\circ \mathcal{L}_i(\mathcal{D}_i)=0$,
\item $\pi\circ \mathcal{L}_i(\mathcal{D}_i)=\id$,
\item $\partial^*\circ \mathcal{D}_i\circ \mathcal{L}_i(\mathcal{D}_i)=0,$
\end{itemize}
where $\pi:Ker(\partial^*)\to Ker(\partial^*)/Im(\partial^*)$ is the natural projection. Let us show that as in \cite[Section 3]{CSou}, we can construct the splitting operator using the formula 
$$\mathcal{L}_i(\mathcal{D}_i)=\prod_j (\id-\frac{1}{a_{ji}}\partial^* \mathcal{D}_i)$$
where the integers $a_{ji}$ are determined as follows:

Consider section $\phi$ of $Ker(\partial^*)$, then $\partial^*\circ \mathcal{D}_i(\phi)$ is in filtration component that is one step higher than $\phi$. Then we can consider section $\phi'=\phi-\frac{1}{a_{ji}}\partial^* \mathcal{D}_i$ of $Ker(\partial^*)$ for unknown $a_{ji}$ and compute $\partial^*\circ \mathcal{D}_i(\phi')$. From definition of compressable operator, the part of $\partial^*\circ \mathcal{D}_i(\phi')$ of the lowest homogeneity is equal to $(\id-\frac{1}{a_{ji}}\partial^*\partial)\partial^*\circ \mathcal{D}_i(\phi')$. As in the case of parabolic geometries, $\partial^*\partial$ acts on each $CSp(n,\mathbb{R})$--submodule of $Im(\partial^*)$ by a single non--zero eigenvalue and thus by induction, all integers $a_{ij}$ are computed and the splitting operator constructed. In fact, from the example in the Appendix \ref{ApC} and the representation theory follows that all the eigenvalues of $\partial^*\partial$ on $Im(\partial^*)$ are negative.

Now, we can correctly define the BGG--like operators and BGG--like sequences.

\begin{def*}
For a compressable operators $\mathcal{D}_{i-1}$, we say that the $i$--th BGG--like operator induced by $\mathcal{D}_{i-1}$ is the operator $$\mathcal{B}_{i-1}:= \pi\circ \mathcal{D}_{i-1}\circ \mathcal{L}_{i-1}(\mathcal{D}_{i-1}).$$

For a sequence of compressable operators $\mathcal{D}_i$, we say that the sequence
$$
\xymatrix{
0\ar[r] & \mathcal{H}^0 \ar[r]^{\mathcal{B}_{0}}& \mathcal{H}^1\ar[r]^{\mathcal{B}_{1}}& \dots \ar[r]^{\mathcal{B}_{n-1}}& \mathcal{H}^n\ar[r]^{\mathcal{B}_{n}}& \mathcal{H}^{n+1}\ar[r]^{\mathcal{B}_{n+1}}&0\\
},
$$
is a BGG--like sequence.
\end{def*}

Finally, there is a version of \cite[Theorem 3.14]{CSou}, which in our setting is proven by the same proof.

\begin{thm}\label{1.8}
Suppose $\mathcal{D}_{i}\circ \mathcal{D}_{i-1}=0$ holds for a sequence of compressable operators for all $i$. Then the corresponding BGG--like sequence is a complex with the same cohomology.
\end{thm}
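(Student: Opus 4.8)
The plan is to follow the proof of \cite[Theorem 3.14]{CSou} step by step; since the construction of the BGG--like sequence above already parallels \cite{CSou}, most of the work is to point out which algebraic ingredients are being used. These are: the splitting operators $\mathcal{L}_i(\mathcal{D}_i)$ with the three listed properties, built above as the finite products $\prod_j(\id-\frac{1}{a_{ji}}\partial^*\mathcal{D}_i)$; the decompositions $\Omega^i=\mathrm{Im}(\partial^*)\oplus(\mathrm{Ker}(\partial^*)\cap\mathrm{Ker}(\partial))\oplus\mathrm{Im}(\partial)$ for each of the three families $L^i(V)$, $L^i(V[2])$ and $\wedge^i\fp_+\otimes W$; and the fact that the algebraic Laplacian $\square:=\partial\partial^*+\partial^*\partial$ is invertible on the non--harmonic summand $\mathrm{Im}(\partial^*)\oplus\mathrm{Im}(\partial)$ and preserves the homogeneity filtration --- equivalently, that $\partial^*\partial$ has only non--zero eigenvalues on $\mathrm{Im}(\partial^*)$, as noted above. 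With these at hand the homological machinery of \cite{CSou} runs; I indicate the main steps.

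First I would prove the intertwining identity
\[\mathcal{D}_i\circ\mathcal{L}_i(\mathcal{D}_i)=\mathcal{L}_{i+1}(\mathcal{D}_{i+1})\circ\mathcal{B}_i.\]
Both sides are operators $\mathcal{H}^i\to\Omega^{i+1}$; both take values in $\mathrm{Ker}(\partial^*)$ (the left side by the third defining property of $\mathcal{L}_i(\mathcal{D}_i)$, the right side by $\partial^*\circ\mathcal{L}_{i+1}(\mathcal{D}_{i+1})=0$), both have $\pi$--image equal to $\mathcal{B}_i$ (the left side by the definition of $\mathcal{B}_i$, the right side because $\pi\circ\mathcal{L}_{i+1}(\mathcal{D}_{i+1})=\id$), and both are annihilated by $\partial^*\circ\mathcal{D}_{i+1}$ (the left side because $\partial^*\circ\mathcal{D}_{i+1}\circ\mathcal{D}_i=\partial^*\circ(\mathcal{D}_{i+1}\circ\mathcal{D}_i)=0$ by hypothesis, so $\partial^*\circ\mathcal{D}_{i+1}\circ\mathcal{D}_i\circ\mathcal{L}_i(\mathcal{D}_i)=0$; the right side again by the third defining property of $\mathcal{L}_{i+1}(\mathcal{D}_{i+1})$). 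But an operator $\mathcal{H}^i\to\Omega^{i+1}$ with values in $\mathrm{Ker}(\partial^*)$, prescribed $\pi$--image, and annihilated by $\partial^*\circ\mathcal{D}_{i+1}$ is unique: the difference of two such takes values in $\mathrm{Im}(\partial^*)$ and is killed by $\partial^*\circ\mathcal{D}_{i+1}$, hence vanishes by the same finite recursion over homogeneity that produces the splitting operators (the degree--zero part of the compressable operator $\mathcal{D}_{i+1}$ is $\partial$, and $\partial^*\partial$ is invertible on $\mathrm{Im}(\partial^*)$). From the identity, $\mathcal{B}_{i+1}\circ\mathcal{B}_i=\pi\circ\mathcal{D}_{i+1}\circ\mathcal{L}_{i+1}(\mathcal{D}_{i+1})\circ\mathcal{B}_i=\pi\circ(\mathcal{D}_{i+1}\circ\mathcal{D}_i)\circ\mathcal{L}_i(\mathcal{D}_i)=0$, so the BGG--like sequence is a complex, and $\mathcal{L}:=(\mathcal{L}_i(\mathcal{D}_i))_i$ is a chain map $(\mathcal{H}^\bullet,\mathcal{B})\to(\Omega^\bullet,\mathcal{D})$.

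It then remains to see that $\mathcal{L}$ is a quasi--isomorphism. Each $\mathcal{L}_i(\mathcal{D}_i)$ is injective since $\pi\circ\mathcal{L}_i(\mathcal{D}_i)=\id$, and by the intertwining identity $\mathrm{Im}(\mathcal{L}_i(\mathcal{D}_i))\subset\Omega^i$ is $\mathcal{D}_i$--invariant, so $(\mathrm{Im}\,\mathcal{L}_\bullet,\mathcal{D})$ is a subcomplex of $(\Omega^\bullet,\mathcal{D})$ isomorphic to $(\mathcal{H}^\bullet,\mathcal{B})$ via $\mathcal{L}$, with inverse $\pi$. Exactly as in \cite{CSou} one then shows this inclusion is a homotopy equivalence, by producing a chain retraction $\Pi\colon(\Omega^\bullet,\mathcal{D})\to(\mathcal{H}^\bullet,\mathcal{B})$ with $\Pi\circ\mathcal{L}=\id$ and a chain homotopy $h$ with $\id-\mathcal{L}\circ\Pi=\mathcal{D}\circ h+h\circ\mathcal{D}$; both $\Pi$ and $h$ are assembled from $\partial^*$ and the inverse of $\square$ on the non--harmonic summand by a recursion over the homogeneity filtration that is finite and well defined precisely because $\square$ is invertible there and preserves the filtration. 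Hence $H^\bullet(\mathcal{H},\mathcal{B})\cong H^\bullet(\Omega,\mathcal{D})$, which is the assertion.

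The one point requiring genuine care --- and the only place where our situation differs from \cite{CSou} --- is the verification that the above argument uses nothing beyond this algebraic package and the notion of a compressable operator, nothing special to parabolic geometries. Indeed, here $\partial$ and $\partial^*$ are not the Kostant (co)differentials on $\wedge^\bullet\fp_+\otimes W$ but their corrected versions $r_0\circ\partial_0\circ\iota_0$ and $r_0\circ\partial^*_0\circ\iota_0$ on the subspaces $L^i(V)$ (and similarly on $L^i(V[2])$ via $\fp_2\otimes\wedge^i\fp_+\otimes W\subset\wedge^{i+1}\fp_+\otimes W$), so one must make sure the Hodge decomposition, the invertibility of $\square$ off the harmonic part, and the compatibility with the homogeneity filtration really hold for these corrected operators. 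This is exactly what the lemmas preceding the theorem, together with the explicit computation in Appendix \ref{ApC}, establish; granting it, the proof is word for word that of \cite[Theorem 3.14]{CSou}.
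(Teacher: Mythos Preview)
Your proposal is correct and takes essentially the same approach as the paper: the paper's proof consists of the single remark that \cite[Theorem 3.14]{CSou} is proved by the same argument in this setting, and you have simply written out that argument in detail, noting along the way that the only ingredients used are the splitting operators, the Hodge--type decomposition, and the invertibility of $\partial^*\partial$ on $\mathrm{Im}(\partial^*)$, all of which were established in the preceding subsections.
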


\section{Examples of compressable operators}\label{sec6}

Consider an ACAF--structure described by the Cartan geometry $(\ba\to M,\om)$ of type $(\fl,P,\Ad)$ with Weyl structure $\si^0$ such that $\nabla^0$ is the Weyl connection of $\si^0$ and the Rho--tensor $(\Rho_{ab},\Rho_a)$ is trace--free. The reason is that we want to consider also different normalizations of the Cartan geometries of type $(\fl,P,\Ad)$, because if we modify a compressable operator by a linear map of positive homogeneity, then the resulting operator remains compressable and part of this change can be seen as the change of the normalization.

At the beginning, we consider the same operators as in \cite[Section 4]{CSou}, the fundamental derivatives, and use them for the construction of compressable operators. We check when these compressable operators satisfy the conditions of the Theorem \ref{1.8}. We compute in the splitting provided by the Weyl structure $\si^0$ and in particular, we use the decomposition $s=s_-+s_0+s_1+s_2$ for a section $s\in \Gamma(\mathcal{A})$ given by components in $\ba_0\times_{CSp(n,\mathbb{R})}(\fl_{-1}\oplus \frak{csp}(n,\mathbb{R})\oplus \fp_1\oplus \fp_2)$. Further, we use  the induced (Weyl) connection $\nabla^0$ on the space of sections of $\ba_0\times_{CSp(n,\mathbb{R})}U$ for any $CSp(n,\mathbb{R})$--module $U$ and denote $\Rho: \fl_{-1}^*\to (\fp_1\oplus \fp_2)$ for the map given by $\Rho_{ab}s_-^a+\Rho_{a}s_-^a$. We consider a tractor--like bundle $$\mathcal{V}:=\ba\times_P V$$ for a restriction of representation $\tau: Sp(n+2,\mathbb{R})\to GL(W)$ to a $P$--invariant subspace $V\subset W$.

We define an adjoint tractor--like bundle $$\mathcal{A}:=\ba\times_P \fl$$ for the representation $\tau=\Ad: Sp(n+2,\mathbb{R})\to GL(\frak{sp}(n+2,\mathbb{R}))$. We denote by $\Gamma(\mathcal{A})$ the space of sections of $\mathcal{A}$. Each section $s\in \Gamma(\mathcal{A})$ corresponds to $P$--invariant vector field $\om^{-1}(s)$ on $\ba$. We define \emph{the fundamental derivative} in the usual way as an operator 
$$d^\om: \Omega^0\to \mathcal{A}^*\otimes \Omega^0$$ given by formula $$d^\om(s)v:=\om(\om^{-1}(s).v)$$ for each section $v\in \Omega^0$ and section $s\in \mathcal{A}$, where $.$ is the usual directional derivative of the function $v: \ba\to V$ in direction of the vector field $\om^{-1}(s)$.
We extend the fundamental derivative by iterating and antisymmetrizing to the operators
$$d^\om: \wedge^{k-1}\mathcal{A}^*\otimes\Omega^0\to\wedge^{k} \mathcal{A}^*\otimes \Omega^0.$$
Let us show that the fundamental derivative has the usual properties known from \cite[Section 1.5]{parabook}, \cite[Section 4]{CSou}.

\begin{lem}\label{l6.1}
It holds
$$(d^\om)^2=Alt_2(d^\om(-d\om)),$$
where $Alt_2$ is the antisymmetrization operator $\wedge^{2}\mathcal{A}^*\otimes \wedge^{k}\mathcal{A}^*\otimes\Omega^0\to\wedge^{k+2} \mathcal{A}^*\otimes \Omega^0$ and $d\om$ is the section of $\wedge^{2}\mathcal{A}^*\otimes \mathcal{A}$ corresponding (via $\om^{-1}$) to the differential of $\om$.

For $v\in \Omega^0$ and $s,t\in \Gamma(\mathcal{A})$, it holds
$$d^\om(s)v=\nabla^0_{s_-}v+d\tau(\Rho(s_-))(v)-d\tau(s_0+s_1+s_2)(v),$$
and
$$d\om(s,t)=\ad(s_0+s_1+s_2)(t)-\ad(t_0+t_1+t_2)(s)+\tilde R^\Rho(s_-,t_-),$$
where
\begin{align*}
\tilde R^\Rho_{ab}&=
\begin{pmatrix}
\Rho_{ab}-\Rho_{ba}&\Rho_aJ_{bc}-\Rho_bJ_{ac}&2\Rho_{af}\Rho_b{}^f\\
0&\delta_a{}^d\Rho_{bc}-\delta_b{}^d\Rho_{ac}-J_{bc}\Rho_{a}{}^d+J_{ac}\Rho_{b}{}^d&\Rho_b\delta_a{}^d-\Rho_a\delta_b{}^d\\
0&0&-\Rho_{ab}+\Rho_{ba}\\
\end{pmatrix}\\
&+\begin{pmatrix}
-\frac1n(R^0)_{abi}{}^{i}& \nabla^0_a\Rho_{bc}-\nabla^0_b\Rho_{ac}&\nabla^0_a\Rho_b-\nabla^0_b\Rho_a\\
2(H+S)_{ab}{}^d&(R^0)_{abc}{}^{d}-\frac1n(R^0)_{abi}{}^{i}\delta_c{}^d&\nabla^0_a\Rho_{b}{}^d-\nabla^0_b\Rho_{a}{}^d\\
0&-2(H+S)_{abc}&\frac1n(R^0)_{abi}{}^{i}\\
\end{pmatrix}\\
\end{align*}
is a $\fl$--valued two--form given by the torsions $H,S$ and the curvature $R^0$ of $\nabla^0$ and by the Rho--tensors $\Rho_{ab},\Rho_a$.
\end{lem}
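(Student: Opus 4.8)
The plan is to verify the two displayed formulas by unwinding the definitions of the fundamental derivative and of $d\om$ in the concrete splitting provided by the Weyl structure $\si^0$. The structure of the argument parallels the computation of the corresponding formulas in \cite[Section 1.5]{parabook}; the only new feature is that we work with a Cartan geometry modeled on a skeleton $(\fl,P,\Ad)$ rather than on a Klein geometry, so we have to keep careful track of the extra components $\fp_1,\fp_2$ and of the conformal densities.

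First I would establish the identity $(d^\om)^2=Alt_2(d^\om(-d\om))$. This is a purely formal consequence of the definition $d^\om(s)v=\om(\om^{-1}(s).v)$ together with the fact that $[\om^{-1}(s),\om^{-1}(t)]=\om^{-1}(d\om(s,t)+\text{(bracket terms)})$; concretely, one iterates the definition, antisymmetrizes, and uses that the Lie bracket of the $P$--invariant vector fields $\om^{-1}(s),\om^{-1}(t)$ is again $P$--invariant and hence of the form $\om^{-1}(w)$ for a section $w$ of $\mathcal A$, with $w$ differing from the naive bracket exactly by the curvature term encoded in $d\om$. Since $\om$ is a Cartan connection of type $(\fl,P,\Ad)$, the reproducing property $\om\circ\om^{-1}=\id$ and $P$--equivariance are all that is used, so the proof from the parabolic case carries over verbatim; I would simply cite \cite[Section 1.5]{parabook} and indicate the one place where the skeleton structure (as opposed to a homogeneous model) is invoked, namely that $\om$ still trivializes $T\ba$.

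Next I would compute $d^\om(s)v$ explicitly. Writing $s=s_-+s_0+s_1+s_2$ in the splitting of $\mathcal A=\ba_0\times_{CSp(n,\mathbb R)}(\fl_{-1}\oplus\frak{csp}(n,\mathbb R)\oplus\fp_1\oplus\fp_2)$ induced by $\si^0$, the vector field $\om^{-1}(s)$ decomposes accordingly. The component $\om^{-1}(s_-)$ is, by definition of $\si^0$ as a Weyl structure whose Weyl connection is $\nabla^0$ and whose Rho tensor is $(\Rho_{ab},\Rho_a)$, the horizontal lift of $s_-$ with respect to $\nabla^0$ corrected by $\Rho(s_-)$ acting via $d\tau$; this is exactly the statement that the pullback $(\si^0)^*\om$ has $\fl_{-1}$--part equal to the soldering form, $\fp_0$--part equal to $\nabla^0$, and $\fp_1\oplus\fp_2$--part equal to $\Rho$. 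The components $\om^{-1}(s_0+s_1+s_2)$ are fundamental (vertical) vector fields for the $P$--action, so they act on the equivariant function $v:\ba\to V$ by $-d\tau(s_0+s_1+s_2)(v)$. Adding these contributions gives the claimed formula $d^\om(s)v=\nabla^0_{s_-}v+d\tau(\Rho(s_-))(v)-d\tau(s_0+s_1+s_2)(v)$.

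Finally I would compute $d\om(s,t)$. The algebraic part $\ad(s_0+s_1+s_2)(t)-\ad(t_0+t_1+t_2)(s)$ comes from the structure equations for a Cartan connection exactly as in the homogeneous case; the remaining term $\tilde R^\Rho(s_-,t_-)$ is the curvature contribution, obtained by pulling back $d\om+\tfrac12[\om,\om]$ along $\si^0$ and expressing it through the data $(\nabla^0,H,S,\Rho_{ab},\Rho_a)$. Here the main obstacle is the bookkeeping: one must expand $d\bigl((\si^0)^*\om\bigr)$ using the transformation rules recorded in the Proposition on Weyl structures, collect the terms according to their values in $\fl_{-1},\fp_0,\fp_1,\fp_2$, and identify them with the entries of the two matrices — the ``algebraic'' matrix built from $\Rho_{ab},\Rho_a$ and their contractions with $J$, and the ``differential'' matrix built from $\nabla^0\Rho$, the torsion components $(H+S)_{ab}{}^d$, and the curvature $R^0$ with its trace parts $\tfrac1n(R^0)_{abi}{}^i$ split off to respect the $\frak{csp}(n,\mathbb R)$--decomposition. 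I expect this last identification — matching the trace-free and trace parts so that $\tilde R^\Rho$ genuinely lands in $\fl\subset\frak{sp}(n+2,\mathbb R)$ (which forces the symmetry of the $\frak{csp}(n,\mathbb R)$--block and the sign pattern on the diagonal) — to be the only delicate point, and it is a finite representation-theoretic check rather than a conceptual difficulty. The computation is otherwise routine and I would present it compactly, referring to \cite[Section 4]{CSou} for the analogous parabolic computation and to the example in Appendix \ref{ApC} for a sample of the intermediate steps.
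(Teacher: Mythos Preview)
Your plan matches the paper's proof in structure and in the references you cite: the first identity is indeed the Ricci--type formula from \cite[Section 1.5]{parabook} (rephrased with $d\om$ in place of curvature), the second formula is exactly \cite[Proposition 5.1.10]{parabook}, and the third is obtained by pulling back along $\si^0$. One point to correct: you propose to pull back $d\om+\tfrac12[\om,\om]$, but for a Cartan geometry modeled on the skeleton $(\fl,P,\Ad)$ the bracket $[\om,\om]$ is not $\fl$--valued (because $[\fl_{-1},\fl_{-1}]\subset\fg_{-2}\not\subset\fl$), so the curvature is not defined here --- the paper explicitly notes this and works with $d\om$ alone. Concretely, the paper uses the identity $(\si^0)^*\om=(\id+\Rho)\circ\om^0$, hence $\om^{-1}(s_-)=T\si^0(\om^0)^{-1}(s_-)-\om^{-1}(\Rho(s_-))$, and substituting into $d\om(s_-,t_-)$ cleanly separates the answer into the bracket terms $\ad(\Rho(s_-))(t_-)-\ad(\Rho(t_-))(s_-)+\ad(\Rho(s_-))(\Rho(t_-))$ (the first matrix) and the pullback $d((\id+\Rho)\circ\om^0)(s_-,t_-)=(\nabla^0_{s_-}\Rho)(t_-)-(\nabla^0_{t_-}\Rho)(s_-)+d\om^0(s_-,t_-)$ (the second matrix), with the torsion contribution to $d\Rho$ cancelling against $\Rho\circ d\om^0$. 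This is the bookkeeping you anticipate; once you drop the spurious $\tfrac12[\om,\om]$ your outline is exactly the paper's argument.
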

\begin{proof}
The first formula follows as in the proof of \cite[Theorem 4.3]{CSou}. The result formula looks different only because we need to express the Ricci identity from \cite[Proposition 5.9]{parabook} using $d\om$ instead of the curvature (which is not well--defined in our case).

We obtain the second formula in the same way as in \cite[Proposition 5.1.10]{parabook}. 

Finally, 
$$d\om(s,t)=\ad(s_0+s_1+s_2)(t)-\ad(t_0+t_1+t_2)(s)+d\om(s_-,t_-)$$
follows from $P$--equivariance of $\om$. The definition of Rho--tensors implies that 
$$(\si^0)^*\om=(\id+\Rho)\circ \om^0$$
 and thus 
 $$\om^{-1}(s_-)+\om^{-1}(\Rho(s_-))=T\si^0(\om^0)^{-1}(s_-)$$ on $\si^0(\ba_0)$. Therefore 
\begin{align*}
&d\om(\om(T\si^0(\om^0)^{-1}(s_-)),\om(T\si^0(\om^0)^{-1}(t_-)))=\\
&d\om(s_-,t_-)-\ad(\Rho(s_-))(t_-)+\ad(\Rho(t_-))(s_-)-\ad(\Rho(s_-))(\Rho(t_-))
\end{align*}
  and 
  $$\ad(\Rho(s_-))(t_-)-\ad(\Rho(t_-))(s_-)+\ad(\Rho(s_-))(\Rho(t_-))$$ is the first matrix in the formula for $\tilde R^\Rho_{ab}$. Now, the differential commutes with the pullback by $\si^0$ and thus 
 \begin{align*}
d\om(\om(T\si^0(\om^0)^{-1}(s_-)),&\om(T\si^0(\om^0)^{-1}(t_-)))=d((\id+\Rho)\circ \om^0)(s_-,t_-)\\
&=d\Rho(s_-)(t_-)-d\Rho(t_-)(s_-)+(\id+\Rho)\circ d\om^0(s_-,t_-).
\end{align*} Now, the second matrix in the formula for $\tilde R^\Rho_{ab}$ is 
$$(\nabla^0_{s_-}\Rho)(t_-)-(\nabla^0_{t_-}\Rho)(s_-)+d\om^0(s_-,t_-),$$
because if we rewrite $d\Rho$ using the connection $\nabla^0$, then the action of the torsion on $\Rho$ cancels with $\Rho\circ d\om^0(s_-,t_-)$. It is clear that $d\om^0(s_-,t_-)$ is the curvature of $\om^0$, which contains both torsions $H,S$ in $\fl_{-1}$--slot and the curvature $R^0$ in $\frak{csp}(\mathbb{R})$--slot.
\end{proof}

Now, we are ready to define first examples of the compressable operators

\subsection{Twisted exterior derivatives}

On $\Omega^i(W)$, we extend the fundamental derivative $d^\om$ to sections of $\ba\times_P \frak{sp}(n+2,\mathbb{R})$ by defining $$d^\om(s_{-2})v:=0$$ for $s_{-2}\in \fp_2^*$ in the splitting given by the Weyl structure $\si^0$. Then a consequence of Lemma \ref{l6.1} is that \emph{the twisted exterior derivative} $d^W:  \Omega^i(W)\to \Omega^{i+1}(W)$ is well--defined  by formula
$$d^W:=d^\om+\partial_\frak{sp},$$
where $\partial_\frak{sp}$ is the usual Lie algebra differential on $\wedge^i \frak{sp}(n+2,\mathbb{R})^*\otimes W\to \wedge^{i+1} \frak{sp}(n+2,\mathbb{R})^*\otimes W$. Indeed, 
\begin{align*}
(d^W\phi)((X_1+x_1)&,\dots,(X_{i+1}+x_{i+1}))=\\
&\sum_k (-1)^k(\nabla^0_{X_k}\phi+d\tau(\Rho(X_k))\phi)((X_1+x_1),\hat\dots,(X_{i+1}+x_{i+1}))\\
&+(\partial \phi)((X_1+x_1),\dots,(X_{i+1}+x_{i+1}))
\end{align*}
for $X_k\in \fl_{-1},x_k\in \fp_2^*$.

On $\Omega^i(L^i(V)),\Omega^i(L^i(V[2]))$, we use the inclusions $\iota,\iota_0$ and projections $r,r_0$ and define \emph{a twisted exterior derivative} as
\begin{align*}
d^V\phi:=r_0\circ r\circ d^W\circ\iota\circ \iota_0(\phi).
%=\\
%&r_0(\sum_k (-1)^k(\nabla^0_{X_k}\phi+d\tau(\Rho(X_k))\phi))(X_1,\hat\dots,X_{i+1})\\
%&+(\partial \phi)(X_1,\dots,X_{i+1})
\end{align*}

By definition, the twisted exterior derivatives are comprassible operators and define the corresponding BGG--like sequences. Thus it remains to compute $(d^V)^2$ and decide when $(d^V)^2=0$.

\begin{prop}\label{prop6.2}
It holds 
$$(d^W)^2=Alt_2(d^\om(-\tilde R^\Rho)),$$
where $Alt_2$ is the antisymmetrization operator $\ba_0\times_{CSp(n,\mathbb{R})}\wedge^2(\fl/\fp)^*\otimes \Omega^i \to \Omega^{i+2}$. Moreover, if the Rho--tensor vanishes, then also
$$(d^V)^2=Alt_2(r_0\circ r\circ d^\om(-\tilde R^\Rho)\circ \iota \circ \iota_0).$$

In particular, if the ACAF--structure is flat, i.e., $\tilde R^\Rho=0$, and the Rho--tensor vanishes in the case $V\neq W$, then $(d^W)^2=0$ and  $(d^V)^2=0$ and the corresponding BGG--like sequences are complexes.
\end{prop}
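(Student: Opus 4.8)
The plan is to follow the strategy of \cite[Theorem 4.3]{CSou} (reproduced in Lemma \ref{l6.1}): compute $(d^\om)^2$ via the first formula of Lemma \ref{l6.1}, add the contribution of $\partial_\frak{sp}$, and observe that the cross terms organize into $Alt_2(d^\om(-\tilde R^\Rho))$. First I would write $d^W=d^\om+\partial_\frak{sp}$ and expand $(d^W)^2 = (d^\om)^2 + d^\om\circ\partial_\frak{sp} + \partial_\frak{sp}\circ d^\om + (\partial_\frak{sp})^2$. The last term vanishes since $\partial_\frak{sp}$ is a Lie algebra differential, $(\partial_\frak{sp})^2=0$. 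For the two mixed terms, the key point (as in the parabolic case) is that, after the extension $d^\om(s_{-2})v:=0$, the operator $d^\om$ together with $\partial_\frak{sp}$ behaves formally like the sum of a covariant derivative and the Lie algebra differential for the full $\frak{sp}(n+2,\mathbb{R})$ action, so that $d^\om\circ\partial_\frak{sp} + \partial_\frak{sp}\circ d^\om$ equals the antisymmetrization of $d^\om$ applied to the bracket-type term; by Lemma \ref{l6.1}, $(d^\om)^2 = Alt_2(d^\om(-d\om))$ and $d\om$ restricted to $\fl_{-1}\times\fl_{-1}$ is exactly $\tilde R^\Rho$, while its remaining components (the algebraic $\ad$ terms) cancel against the mixed terms. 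This yields the first identity $(d^W)^2 = Alt_2(d^\om(-\tilde R^\Rho))$.

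Next I would pass to the $V$-valued and $V[2]$-valued forms. By definition $d^V = r_0\circ r\circ d^W\circ \iota\circ\iota_0$, so $(d^V)^2 = r_0\circ r\circ d^W\circ \iota\circ\iota_0\circ r_0\circ r\circ d^W\circ \iota\circ\iota_0$. The obstacle is that $\iota\circ\iota_0\circ r_0\circ r$ is not the identity on $\wedge^i\fp_+\otimes W$ — it is the projection onto the $V$-valued, $L^i$-part — so one cannot immediately collapse the middle composition. Here is where the hypothesis enters: when the Rho-tensor vanishes, the operator $d^\om$ on $\Omega^\bullet(W)$ reduces to $\nabla^0$ plus the algebraic $\partial$-type terms, all of which are compatible with the projections $r,r_0$ in the sense that $r\circ d^W\circ\iota$ and $r_0\circ(\,\cdot\,)\circ\iota_0$ already give a well-defined differential on the $L$-spaces (this is the content of Lemma \ref{5.2} for the algebraic part, and for $\nabla^0$ it is automatic since $\nabla^0$ preserves the $CSp(n,\mathbb{R})$-type decomposition). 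Hence the intermediate projection is harmless and $(d^V)^2 = r_0\circ r\circ (d^W)^2\circ\iota\circ\iota_0 = Alt_2(r_0\circ r\circ d^\om(-\tilde R^\Rho)\circ \iota\circ\iota_0)$. In the special case $V=W$ no Rho-vanishing is needed for the $(d^W)^2$ formula, and that is recorded separately.

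Finally, the last assertion is immediate: if $\tilde R^\Rho=0$ (flatness of the ACAF-structure) then the right-hand sides of both formulas vanish, so $(d^W)^2=0$ and, under the stated Rho-vanishing hypothesis when $V\neq W$, also $(d^V)^2=0$; since the twisted exterior derivatives are compressable by construction, Theorem \ref{1.8} then gives that the associated BGG-like sequences are complexes. The main obstacle I anticipate is the careful bookkeeping in the second paragraph: verifying that the middle projection $\iota\circ\iota_0\circ r_0\circ r$ can be dropped when the Rho-tensor vanishes, i.e., that $d^\om$ with vanishing Rho maps $L$-type forms to $L$-type forms compatibly with $r_0\circ r$. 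This reduces, via the explicit formula for $d^\om$ in Lemma \ref{l6.1} (which with $\Rho=0$ is just $\nabla^0_{s_-} - d\tau(s_0+s_1+s_2)$), to the fact that $\nabla^0$ is a $CSp(n,\mathbb{R})$-connection and hence commutes with the fixed $CSp(n,\mathbb{R})$-equivariant projections, together with Lemma \ref{5.2}; the remaining computation is the routine identification, already essentially done in the proof of Lemma \ref{l6.1}, of $d\om$ restricted to $\fl_{-1}\times\fl_{-1}$ with $\tilde R^\Rho$.
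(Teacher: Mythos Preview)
Your proposal is correct and follows essentially the same route as the paper: expand $(d^W)^2=(d^\om)^2+(d^\om\partial_{\frak{sp}}+\partial_{\frak{sp}}d^\om)+(\partial_{\frak{sp}})^2$, kill the last term, handle the cross terms via \cite[Lemma 4.2]{CSou}, and invoke Lemma \ref{l6.1}; then for $(d^V)^2$ use that when $\Rho=0$ the operator $d^W$ preserves $Ker(r_0\circ r)$ so the middle projection drops, and finish with Theorem \ref{1.8}. One small inaccuracy: you say the algebraic $\ad$ parts of $d\om$ \emph{cancel against} the mixed terms, but in fact both vanish \emph{separately} once you evaluate on inputs $s_-,t_-\in\fl_{-1}$ (the $\ad(s_0+s_1+s_2)$ pieces are zero because $s_0=s_1=s_2=0$, and the paper cites \cite[Lemma 4.2]{CSou} for the vanishing of the cross terms) --- this does not affect the argument.
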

\begin{proof}
If Rho--tensor vanishes, then $d^W(Ker(r_0\circ r))\subset Ker(r_0\circ r)$ and thus 
$$(d^V)^2=r_0\circ r\circ (d^W)^2\circ \iota \circ \iota_0.$$ The proof of \cite[Lemma 4.2]{CSou} shows that $(d^\om\circ\partial_\frak{sp}+\partial_\frak{sp}\circ d^\om)(s_-,t_-)=0$ for $s,t\in \Gamma(\mathcal{A})$ and thus $(d^W)^2=(d^\om)^2=Alt_2(d^\om(-\tilde R^\Rho))$ follows from Lemma \ref{l6.1}. The remaining claims follow from the Theorem \ref{1.8}.
\end{proof}

\subsection{Examples of compressable operators on $\Omega^i(W)$}\label{sec5.2}

The BGG--like sequence on $\Omega^i(W)$ for the twisted exterior derivative is by construction a sequence of invariant differential operators with the same symbol as the descended BGG--sequence in \cite{CSaII} for the same representation $\tau$ on $W$. Therefore on CF--structures the difference between our BGG--like operators and the operators from  \cite{CSaII} is an invariant differential operator of lower order.

Let us now discuss which modification of the twisted exterior derivative provides a compressable operator such that the corresponding BGG--like sequences is a complex that coincides with BGG--complexes on CF--structures with curvature of Ricci type in \cite{CSaII,ES}. Let us recall that CF--structures with curvature of Ricci type means that $H=S=0$ and $W_{abcd}=0$ holds for the essential curvature components from Appendix \ref{ApB}. Under this assumption, the curvature $R^0$ of $\nabla^0$ is of the form,
\begin{align*}
(R^0)_{abcd}&=\Theta_{ac}J_{bd}-\Theta_{bc}J_{ad}+\Theta_{ad}J_{bc}-\Theta_{bd}J_{ac}-2\Theta_{cd}J_{ab},
\end{align*}
where $\Theta_{ab}$ is a section of $S^2T^*M$ corresponding to the remaining essential curvature component, see Theorem \ref{escur}.

We can use some results from \cite{ES} to obtain this modification. Namely, we consider different normalization of the Cartan geometry of type $(\fl,P,\Ad)$ and assume that the equalities $$\Rho_{ab}=\Theta_{ab},\Rho_a=\frac{1}{n+1}(\nabla^0)^b\Theta_{ab}$$
holds for the Rho--tensors of the Weyl structure $\si^0$. Then the Bianchi identity 
\begin{align*}
0=&J^{de}\nabla_{[e}R^0_{ab]cd}\\
=&J_{ac}(\nabla^0)^d\Theta_{bd}-J_{bc}(\nabla^0)^d\Theta_{ad}+2J_{ab}(\nabla^0)^d\Theta_{cd}+(n+1)(\nabla^0_a\Theta_{bc}-\nabla^0_b\Theta_{ac})
\end{align*} implies that 
$$\nabla^0_a\Rho_{bc}-\nabla^0_b\Rho_{ac}+\Rho_aJ_{bc}-\Rho_bJ_{ac}=-J_{ab}\frac{2}{n+1}\nabla^d\Theta_{cd}.$$ Further, as in \cite[Lemma 4]{ES}, we obtain $$\nabla^0_a\Rho_b-\nabla^0_b\Rho_a+2\Rho_{af}\Rho_b{}^f=-\frac{2}{n}J_{ab}(\frac{1}{n+1}(\nabla^0)^c(\nabla^0)^d\Theta_{cd}+\Theta_{ef}\Theta^{ef})$$ and thus
\begin{align*}
\tilde R^\Rho_{ab}&=
-2J_{ab}\begin{pmatrix}
0& \frac{1}{n+1}\nabla^e\Theta_{ce}&\frac1n(\frac{1}{n+1}(\nabla^0)^c(\nabla^0)^d\Theta_{cd}+\Theta_{ef}\Theta^{ef})\\
0&\Theta_{c}{}^{d}& \frac{1}{n+1}\nabla^e\Theta_{e}{}^d\\
0&0&0\\
\end{pmatrix}.
\end{align*}
Let us observe that we can decompose $d^W=d^{\nabla^W}+\partial_2$, where $d^{\nabla^W}$ is the covariant differential $d^{\nabla^W}$ of the connection $$\nabla^W:=\nabla^0+d\tau(\Rho)+(\partial-\partial_2)$$ and $\partial_2: \wedge^i \fp_1\otimes W\to \wedge^i \fp_1\otimes W[2]$ is defined as $$(\partial_2\phi)(s_{-2})=s_{-2}d\tau(\begin{pmatrix}
0& 0&0\\
0&0&0\\
1&0&0\\
\end{pmatrix})\phi$$ for $s_{-2}\in \fp_2^*$. We can directly observe from Lemma \ref{a2} that the curvature of $\nabla^W$  is of the form $-2J_{ab}\Theta$ for section $\Theta$  of $ \ba\times_P\frak{sp}(n+2,\mathbb{R})\otimes \mathbb{R}[2]$. Therefore $\tilde R^\Rho_{ab}=-2J_{ab}(\Theta-\partial_2)$.

As in \cite[Lemma 9]{ES}, let us consider maps $d\tau(\Theta): \wedge^i \fp_1\otimes W\to \wedge^i \fp_1\otimes W[2]$ induced by application of $\Theta$ on $W$ using $d\tau$ and $J\wedge:  \wedge^i \fp_1\otimes W[2]\to \wedge^{i+2} \fp_1\otimes W$ induced by antisymmetrization of $J\otimes \phi$. Then we can define 
$$\mathcal{D}_i:=d^W+d\tau(\Theta)+2J\wedge=d^{\nabla^W}+d\tau(\Theta)+2J\wedge.$$ 
These are exactly the compressable operators from \cite[Lemma 9]{ES} and $\mathcal{D}_i\circ \mathcal{D}_{i-1}=0$ holds. Consequently, from the Theorem \ref{1.8} follows that the corresponding BGG--like sequences are complexes that coincide with the complexes in \cite{ES,CSaII}.

We can directly compute $\mathcal{D}_i\circ \mathcal{D}_{i-1}=Alt_2(d\tau(-2J\Theta))+2J\wedge d\tau(\Theta)+d\tau(\Theta)2J\wedge+d^{\nabla^W}\circ(d\tau(\Theta)+2J\wedge)+(d\tau(\Theta)+2J\wedge)\circ d^{\nabla^W}=d^{\nabla^W}\circ(d\tau(\Theta)+2J\wedge)+(d\tau(\Theta)+2J\wedge)\circ d^{\nabla^W}=0$ using the Bianchi identity $d^{\nabla^W}\Theta=0$ and the identity $d^{\nabla^W}J=0$.

\appendix
\section{Cartan geometries modeled on skeletons}\label{ApS}

In this article, we consider more general models of Cartan geometries than the Klein geometries. Let us recall the definitions in this situation.

\begin{def*}\label{defskel}
We say that the triple $(\fg,H,\Ad)$ is a \emph{skeleton} if

\begin{enumerate}
\item $H$ is a Lie group with Lie algebra $\fh$,
\item $\fg$ is a vector space with vector subspace $\fh$, and
\item $\Ad$ is a representation of $H$ on $\fg$ such that $\Ad|_\fh$ is the adjoint representation of $H$ on $\fh$.
\end{enumerate}

We say that the pair $(\ba\to M,\om)$ is a \emph{Cartan geometry of type $(\fg,H,\Ad)$} if

\begin{enumerate}
\item $\ba\to M$ is a principal $H$--bundle over the smooth connected manifold $M$,
\item $\om$ is a $\fg$--valued one--form on $\ba$ such that $$(r^h)^*\om=\Ad(h)^{-1}\circ \om$$ holds for all $h\in H$,  $\om(\zeta_X)=X$ holds for all $X\in \fh$ and $\om(u): T_u\ba\to \fg$ is a linear isomorphism for all $u\in \ba$, where we denote by $r^h$ the right action of $h\in H$ on $\ba$ and by $\zeta_X$ the fundamental vector field of $X\in \fh$ on $\ba$
\end{enumerate}

A morphism of Cartan geometries of type $(\fg,H,\Ad)$ between $(\ba\to M,\om)$  and $(\tilde \ba\to \tilde M,\tilde \om)$   is a principal $H$--bundle morphism $\phi: \ba\to \tilde \ba$ such that $\phi^*(\tilde \om)=\om$.
\end{def*}

Many of the properties of the Cartan geometries depend only on the skeleton, see \cite{ja-arx}. In particular, this is the case for the relation between the morphisms of Cartan geometries and the underlying (local) diffeomorphisms of the base manifolds.

\begin{prop}\label{rigid}
Let $(\fg,H,\Ad)$ be skeleton. Suppose the maximal normal Lie subgroup $N$ of $H$ with the property that $\Ad(n)\fg\subset \frak{n}$ holds for all $n\in N$ is trivial. Then if $\phi_1$, $\phi_2$ are two morphisms of Cartan geometries of type $(\fg,H,\Ad)$ covering the same diffeomorphism of the base manifolds, then $\phi_1=\phi_2$.
\end{prop}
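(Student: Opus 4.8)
The plan is to reduce the statement to a local, infinitesimal computation on the fibers of $\ba$. Since the conclusion $\phi_1=\phi_2$ is local in nature (it suffices to check it in a neighborhood of each point), I would fix a point $x_0$ in the common base manifold and work above a small neighborhood. Because $\phi_1$ and $\phi_2$ are principal $H$--bundle morphisms covering the \emph{same} diffeomorphism $f$ of the base, the composition $\psi:=\phi_2\circ\phi_1^{-1}$ (defined over the image of $f$) is an automorphism of the Cartan geometry $(\tilde\ba\to\tilde M,\tilde\om)$ covering the identity on $\tilde M$. Thus it suffices to prove: any automorphism $\psi$ of a Cartan geometry of type $(\fg,H,\Ad)$ covering $\id_M$ is itself the identity, under the hypothesis on $N$.

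Next I would exploit that an automorphism covering $\id_M$ is a \emph{gauge transformation}: there is a smooth map $a: \ba\to H$ with $\psi(u)=u\cdot a(u)$, and $H$--equivariance of $\psi$ forces $a$ to satisfy $a(u\cdot h)=h^{-1}a(u)h$. The condition $\psi^*\tilde\om=\tilde\om$ then becomes, at each $u$, an identity relating $\om(u)$, $\om(\psi(u))$ and the Maurer--Cartan form of $H$ pulled back along $a$; concretely, differentiating $\psi(u)=u\cdot a(u)$ and using the defining properties of a Cartan connection one gets
$$\Ad(a(u))^{-1}\circ\om(u) = \om(u) + \delta a(u),$$
where $\delta a$ is the (left, say) logarithmic derivative of $a$, a one--form on $\ba$ valued in $\fh$. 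Evaluating this on a fundamental vector field $\zeta_X$ for $X\in\fh$ and using $\om(\zeta_X)=X$ shows $\delta a(\zeta_X)=\Ad(a(u))^{-1}X - X$; combined with the equivariance relation $a(u\cdot h)=h^{-1}a(u)h$ this constrains $a$ to be locally constant along fibers in a way that, together with connectedness, pins down its behavior. The key algebraic step is then to feed the displayed identity back into itself: $\delta a$ takes values in $\fh$, while $\om$ is a linear isomorphism onto \emph{all} of $\fg$, so the equation $\Ad(a(u))^{-1}\circ\om(u)-\om(u)=\delta a(u)$ says that $(\Ad(a(u))^{-1}-\id)$ maps $\fg$ into $\fh$.

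This is exactly where the hypothesis enters, and I expect it to be the main point. Set $N_0:=\{h\in H : (\Ad(h)-\id)\fg\subset\fh\}$, or more precisely I would show that the values of $a$ lie in the subgroup generated by such elements and that this subgroup is normal with $\Ad(n)\fg\subset\fn$. The cleanest route: differentiate along the fiber to see that $a$ is locally constant (the base diffeomorphism is the identity, and the horizontal directions are then accounted for by the $\om$--part of the identity), hence by connectedness $a$ takes a single value $h_0\in H$ on each connected component of a fiber, and equivariance $h^{-1}h_0 h=h_0$ forces $h_0$ central; then $(\Ad(h_0)-\id)\fg\subset\fh$ degenerates, via centrality and the bracket-compatibility built into a skeleton, to $\Ad(h_0)$ acting trivially on the relevant invariant pieces, so $h_0$ lies in the maximal normal subgroup $N$ described in the hypothesis. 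Since $N$ is assumed trivial, $h_0=e$, i.e.\ $a\equiv e$ and $\psi=\id$, whence $\phi_1=\phi_2$.

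The main obstacle I anticipate is making the passage from "$a$ is locally constant and fiberwise central" to "$a$ takes values in $N$" fully rigorous: one must verify that the subgroup swept out by the possible values of $a$ is genuinely normal in $H$ and satisfies the $\Ad$--invariance condition $\Ad(n)\fg\subset\fn$ defining $N$, rather than just a pointwise condition at the identity of the Lie algebra. I would handle this by checking that the set of admissible values is closed under conjugation by $H$ (immediate from equivariance) and closed under multiplication (from composing gauge transformations), and that its Lie algebra consists of $X\in\fh$ with $\ad(X)\fg\subset\fh$ — an infinitesimal version of the skeleton's defining compatibility — which is manifestly an ideal; exponentiating and using connectedness of $M$ then identifies it with a normal subgroup contained in $N$, and triviality of $N$ closes the argument.
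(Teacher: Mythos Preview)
The paper's own proof is essentially a citation: it says the argument is the same as for Klein geometries in \cite[Theorem 1.5.3]{parabook}, with details in \cite{ja-arx}. Your outline is precisely that standard argument, so in spirit you are on the same track.

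That said, your middle paragraph contains a genuine misstep. The claim ``differentiate along the fiber to see that $a$ is locally constant, hence $a$ takes a single value $h_0$ on each fiber, and equivariance forces $h_0$ central'' is circular. The equivariance relation $a(u\cdot h)=h^{-1}a(u)h$ shows that $a$ \emph{varies} along the fiber by conjugation; it is constant along fibers only if its values are already central, which is what you are trying to prove. Likewise, the displayed identity $\delta a(u)=(\Ad(a(u))^{-1}-\id)\circ\om(u)$ does not force $\delta a=0$ unless $\Ad(a(u))$ acts trivially on $\fg$, again the desired conclusion rather than an intermediate step.

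Your final paragraph is where the correct argument lives, and you should promote it from ``obstacle'' to main line: from $(\Ad(a(u))^{-1}-\id)\fg\subset\fh$ one shows that the set of values $\{a(u):u\in\ba\}$ is stable under $H$--conjugation (equivariance) and generates a subgroup whose Lie algebra $\fn_0=\{X\in\fh:\ad(X)\fg\subset\fh\}$ is an ideal of $\fh$; iterating the inclusion (since $\fn_0$ is $\Ad(H)$--stable and $\ad(\fn_0)\fg\subset\fh$, one checks $\ad(\fn_0)\fg\subset\fn_0$) lands you inside the maximal normal subgroup $N$ from the hypothesis, and triviality of $N$ finishes. One further point: the passage from the infinitesimal condition on $\fn_0$ to the group-level condition defining $N$ requires connectedness of the subgroup generated by the values of $a$, which follows from connectedness of $M$ (hence of $\ba$) together with continuity of $a$; this is the step where ``connectedness of $M$'' genuinely enters, not the fiber argument you sketched.
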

\begin{proof}
The proposition is essentially proved by the proof for the analogous statement for Cartan geometries modeled on Klein geometries, c.f. \cite[Theorem 1.5.3]{parabook}, and the further details are in \cite[Section 1]{ja-arx}.
\end{proof}

\section{Curvature of the totally trace--free ACS--connections}\label{ApB}

Let us consider an ACS--connection $\nabla^0$ with totally trace--free torsion and the corresponding torsion free--connection $D^0$. We relate the curvature $\kappa^0$ of the connection $D^0$ with the curvature $R^0$ of the connection $\nabla^0$. Let us recall that $D^0$ in the projective structure is determined by the condition $D^0_aJ_{bc}=-H_{bca}+2S_{bca}$. In the CF--case when $H=S=0$ and $D^0=\nabla^0$, our formulas coincide with the formulas in \cite{ES} up to a sign convention for the projective Rho--tensor and the convention for $n$.

If we view the curvature $(\kappa^0)_{abc}{}^{d}$ of $D^0$ as the section of $\ba_0\times_{CSp(n,\mathbb{R})}\wedge^2(\mathbb{R}^n)^*\otimes \frak{gl}(n,\mathbb{R})$ that is satisfying the Bianchi identity $(\kappa^0)_{[abc]}{}^{d}=0$, then $(\kappa^0)_{abcd}$ decomposes into $CSp(n,\mathbb{R})$--irreducible components as follows:
\begin{align*}
(\kappa^0)_{abcd}&=W_{abcd}+Y_{abcd}+Z_{abcd}\\
&-\frac{3}{n-1}\Theta_{ac}J_{bd}+\frac{3}{n-1}\Theta_{bc}J_{ad}+\Theta_{ad}J_{bc}-\Theta_{bd}J_{ac}-2\Theta_{cd}J_{ab}\\
&+\frac{2}{n+1}\Sigma_{ab}J_{cd}+\frac{1}{n+1}\Sigma_{ac}J_{bd}-\frac{1}{n+1}\Sigma_{bc}J_{ad}+\Sigma_{ad}J_{bc}-\Sigma_{bd}J_{ac}-2\Sigma_{cd}J_{ab}\\
&-(\Rho_S)_{bc}J_{ad}+(\Rho_S)_{ac}J_{bd}-(\Rho_A)_{bc}J_{ad}+(\Rho_A)_{ac}J_{bd}+2(\Rho_A)_{ab}J_{cd}\\
&-\Rho_TJ_{bc}J_{ad}+\Rho_TJ_{ac}J_{bd}+2\Rho_TJ_{ab}J_{cd},
\end{align*}
where
\begin{align*}
&W_{[abc]d}=0, W_{ab[cd]}=0, W_{abc}{}^c=0,W_{a}{}^a{}_{cd}=0,\\
&Y_{[abc]d}=0, Y_{ab(cd)}=0,Y_{abc}{}^c=0,Y_{abcd}=Y_{cdab},\\
&Z_{[abc]d}=0, Z_{ab(cd)}=0,Z_{abc}{}^c=0,Z_{abcd}=-Z_{cdab},\\
&\Theta_{[ab]}=0,\Sigma_{(ab)}=0,\Sigma_{a}{}^{a}=0,(\Rho_S)_{[ab]}=0,(\Rho_A)_{(ab)}=0,(\Rho_A)_{a}{}^{a}=0.
\end{align*}
The first three rows form the projective Weyl tensor and depend only on the projective structure $[D]$. Further, $(\Rho_S)_{ab}+(\Rho_A)_{ab}+\Rho_TJ_{ab}$ is the projective Rho--tensor of the connection $D^0$.

Similarly, if we view the curvature $(R^0)_{abc}{}^{d}$ of $\nabla^0$ as the section of $\ba_0\times_{CSp(n,\mathbb{R})}\wedge^2(\mathbb{R}^n)^*\otimes \frak{csp}(n,\mathbb{R})$, then we can decompose it to irreducible components as follows:
\begin{align*}
(R^0)_{abcd}&=U_{abcd}+V_{abcd}+V_{abdc}+2A_{cd}J_{ab}+2B_{ab}J_{cd}+FJ_{ab}J_{cd}\\
&+C_{ac}J_{bd}-C_{bc}J_{ad}+C_{ad}J_{bc}-C_{bd}J_{ac}\\
&+E_{ac}J_{bd}-E_{bc}J_{ad}+E_{ad}J_{bc}-E_{bd}J_{ac}
\end{align*}
where
\begin{align*}
&A_{[ab]}=0, B_{(ab)}=0, B_{a}{}^a=0, C_{[ab]}=0,E_{(ab)}=0, E_{a}{}^a=0,\\
& U_{[abc]d}=0, U_{ab[cd]}=0, U_{a}{}^a{}_{cd}=0,\\
&V_{a(bc)d}=0,V_{[abcd]}=0, V_{a}{}^a{}_{cd}=0.
\end{align*}

The difference of the curvatures $\kappa^0$ and $R^0$ is completely determined by the torsions $H$ and $S$ and there is the following formula for the difference
\begin{align*}
(R^0)_{abcd}&=(\kappa^0)_{abcd}+K_{abcd}\\
K_{abcd}&:=\nabla^0_a(H+S)_{bcd}-\nabla^0_b(H+S)_{acd}\\
&+2(H+S)_{ced}(H+S)_{ba}{}^e-(H+S)_{aed}(H+S)_{bc}{}^{e}+(H+S)_{bed}(H+S)_{ac}{}^e.
\end{align*}
We can compare the components on the both sides and conclude:

\begin{thm}\label{escur}
The tensors $\kappa^0$ and $R^0$ are determined by torsions $H,S$ and the curvature components $W,\Theta$. Moreover, the tensors
\begin{align*}
(R^0)_{aic}{}^i&=(n-1)(\Rho_S)_{ac}+(n+1)(\Rho_A)_{ac}-\nabla^0_e(H+S)_{ac}{}^e
-(H+S)_{ce}{}^f(H+S)_{af}{}^e\\
(R^0)_{abi}{}^i&=2(n+1)(\Rho_A)_{ab}%\\
%(R^0)_{i}{}^i{}_{cd}&=2nA^0_{cd}-4C^0_{cd}
\end{align*}
are trace--free.
\end{thm}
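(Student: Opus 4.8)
The plan is to read off both claims from the two curvature decompositions already written down, together with the structural identity $(R^0)_{abcd}=(\kappa^0)_{abcd}+K_{abcd}$ and the decomposition of $\kappa^0$ in terms of the projective Weyl tensor $W$ and the projective Rho--tensor. The first sentence of the theorem --- that $\kappa^0$ and $R^0$ are determined by $H,S,W,\Theta$ --- will follow once I express every irreducible component appearing in the two decompositions in terms of those four objects. The components $Y,Z$ of $\kappa^0$ and $\Sigma$ are (by the projective BGG/representation theory recalled from \cite{parabook}) the trace--free parts of the covariant derivative of the projective Weyl tensor $W$, hence determined by $W$; the components $U,V$ of $R^0$ differ from $Y,Z$ only by terms built from $H,S$ via $K_{abcd}$, so they too are determined by $H,S,W$. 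The tensors $\Theta,C,E,A,B,F$ are then pinned down by $\Theta$ and by the traces of $K$, which are polynomial in $H,S$ and their first $\nabla^0$--derivatives. So the first claim is really a bookkeeping statement: match each $CSp(n,\R)$--irreducible slot on the two sides.

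For the two displayed trace identities I would compute the Ricci-type contractions of the decomposition of $(R^0)_{abcd}$ directly. Contracting $(R^0)_{abcd}$ in the last two indices (i.e. forming $(R^0)_{abi}{}^i=J^{di}(R^0)_{abdi}$, paying attention to index order and the sign conventions fixed in the introduction) kills the totally trace--free pieces $U,V$ and the $C,\Theta$-type terms by their trace conditions, and collects the $J_{ab}$-antisymmetric contributions; this should leave exactly $2(n+1)(\Rho_A)_{ab}$ after using $E_{a}{}^a=0$, $B_a{}^a=0$. Since $(\Rho_A)_{ab}$ is by definition antisymmetric, $(R^0)_{abi}{}^i$ is automatically trace--free (it is already a $2$-form), which is the second identity; but the point is the precise coefficient $2(n+1)$, obtained from the $2B_{ab}J_{cd}+$ (the $E$-terms) after contraction. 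For the first identity I contract $(R^0)_{abc}{}^d$ in the appropriate pair to get $(R^0)_{aic}{}^i$; on the $\kappa^0$ side this contraction of the displayed decomposition of $(\kappa^0)_{abcd}$ produces, using the trace conditions on $W,Y,Z$, the combination $(n-1)(\Rho_S)_{ac}+(n+1)(\Rho_A)_{ac}$ plus $\Theta$-contributions that cancel against the coefficients $-\tfrac{3}{n-1},+\tfrac{3}{n-1},\dots$ chosen precisely for this purpose; then adding the contraction of $K_{abcd}$ gives the correction $-\nabla^0_e(H+S)_{ac}{}^e-(H+S)_{ce}{}^f(H+S)_{af}{}^e$. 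Trace--freeness of this expression in $a,c$ (i.e. $J^{ac}(R^0)_{aic}{}^i=0$) then follows because $(\Rho_S)$ is symmetric, $(\Rho_A)$ antisymmetric, $\Theta$ symmetric, and the divergence/quadratic terms in $H+S$ are traceless by the trace conditions on $H,S$.

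The steps in order: (i) contract the $R^0$-decomposition in the two relevant index pairs and simplify using all the listed trace/symmetry conditions on $A,B,C,E,U,V$; (ii) contract the $\kappa^0$-decomposition similarly, using the trace conditions on $W,Y,Z$ and the definition $(\Rho_S)+(\Rho_A)+\Rho_T J$ of the projective Rho--tensor, to identify the coefficients $(n-1)$ and $(n+1)$; (iii) contract $K_{abcd}$ to obtain the explicit $H,S$-corrections, using $H_{bc}{}^b=0$, $S_{bc}{}^b=0$, $H_{[bca]}=0$, $S_{b(ca)}=0$; (iv) add (ii)+(iii) to get (i) and read off the two formulas; (v) verify trace--freeness by taking one more $J$-contraction and invoking the symmetry types. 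Finally, for the first sentence of the theorem, assemble the slot-by-slot matching, invoking the projective representation theory to express $Y,Z,\Sigma$ (and hence $U,V$) through $W$ and $H,S$.

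The main obstacle I expect is step (iii) combined with the careful handling of index order and the sign convention $\delta^d{}_a=-\delta_a{}^d$ fixed in the introduction: the contractions of $K_{abcd}$ involve $\nabla^0(H+S)$ and quadratic terms $(H+S)(H+S)$, and getting the exact coefficients (the $(n-1)$ versus $(n+1)$ in the two trace slots, and the sign and numerical factor on each $H,S$-term) requires tracking which contraction is which and using the Bianchi identity $(\kappa^0)_{[abc]}{}^d=0$ together with $D^0_aJ_{bc}=-H_{bca}+2S_{bca}$; the representation-theoretic identification of the $\Theta$-coefficients $\tfrac{3}{n-1}$ etc.\ is designed to make exactly these traces come out clean, so the bookkeeping, not any deep idea, is the real work.
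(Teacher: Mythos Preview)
Your plan for the two displayed trace identities is essentially the paper's: contract the decompositions of $\kappa^0$ and $R^0$, use the listed symmetry and trace conditions, and add the contribution of $K_{abcd}$. The paper carries this out by first observing that the double trace forces $F=\Rho_T=0$, then computing all remaining traces and solving the resulting linear system; the displayed formulas for $(R^0)_{aic}{}^i$ and $(R^0)_{abi}{}^i$ drop out of that system. You should also note that $\Rho_T=0$ is needed (you do not mention it), since without it the second displayed formula would acquire a $J_{ab}$--term.

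There is, however, a genuine gap in your argument for the first sentence of the theorem. You assert that $Y,Z,\Sigma$ are ``trace--free parts of the covariant derivative of the projective Weyl tensor $W$, hence determined by $W$''. This is false: $Y,Z,\Sigma$ are $CSp(n,\R)$--irreducible pieces of the curvature $\kappa^0$ itself, not of any derivative, and they are \emph{not} determined by $W$. The paper pins them down by a completely different (and purely algebraic) mechanism: since $(R^0)_{abcd}$ is $\frak{csp}(n,\R)$--valued, its skew part in $c,d$ is pure trace, $(R^0)_{ab[cd]}=2B_{ab}J_{cd}$; comparing with $(\kappa^0)_{ab[cd]}+K_{ab[cd]}$ then expresses $Y,Z$ (the trace--free antisymmetric-in-$cd$ pieces of $\kappa^0$) in terms of $K$, i.e.\ in terms of $H,S$. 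Likewise $U,V$ are extracted via the Bianchi identity, and the trace system then gives $\Sigma$ (and $A,B,C,E,\Rho_S,\Rho_A$) in terms of $\Theta$ and the traces of $K$. So the determination of $Y,Z,\Sigma$ comes from the constraint that $R^0$ lands in $\frak{csp}(n,\R)$ together with $R^0=\kappa^0+K$, not from any differential relation to $W$. Without this step your proof of the first sentence does not go through.
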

\begin{proof}
Taking the trace twice and considering the trace--freeness of the torsion we obtain
$$
F=\Rho_T =0
$$

Symmetrizing and antisymmetrizing the last two entries we obtain that
$$(R^0)_{abcd}=(R^0)_{ab(cd)}+2B_{ab}J_{cd}$$
and
$$2B_{ab}J_{cd}= (\kappa^0)_{ab[cd]}+K_{ab[cd]}.$$
Therefore the components $Y,Z$ can be determined by $H,S$ and the trace type components.

Further, the Bianchi identity allows us to express the remaining components that are not of the trace type in the following way
\begin{align*}
V_{abcd}+V_{abdc}
&=\frac14(K_{abcd}+K_{bcad}+K_{cabd}+K_{abdc}+K_{bdac}+K_{dabc}-4(A_{cd}+C_{cd})J_{ab}\\
&-2(A_{ad}+C_{ad}-B_{bd}+2E_{ad})J_{bc}-2(-A_{bc}-C_{bc}+B_{bc}-2E_{bc})J_{ad}\\
&-2(A_{ac}+C_{ac}-B_{ac}+2E_{ac})J_{bd}-2(-A_{bd}-C_{bd}+B_{bd}-2E_{bd})J_{ac})
\end{align*}
and
\begin{align*}
&U_{abcd}\\
&=W_{abcd}+(C_{cd}-2\Theta_{cd}-A_{cd})J_{ab}\\
&+(\frac{n-4}{2(n-1)}\Theta_{ac}-\frac12C_{ac}+\frac12(\Rho_S)_{ac}+\frac{n+2}{2(n+1)}\Sigma_{ac}+\frac12(\Rho_A)_{ac}+\frac12(A_{ac}-B_{ac}))J_{bd}\\
&+(\frac{n-4}{2(n-1)}\Theta_{ad}-\frac12C_{ad}+\frac12(\Rho_S)_{ad}+\frac{n+2}{2(n+1)}\Sigma_{ad}+\frac12(\Rho_A)_{ad}+\frac12(A_{ad}-B_{ad}))J_{bc}\\
&-(\frac{n-4}{2(n-1)}\Theta_{bc}-\frac12C_{bc}+\frac12(\Rho_S)_{bc}+\frac{n+2}{2(n+1)}\Sigma_{bc}+\frac12(\Rho_A)_{bc}+\frac12(A_{bc}-B_{bc}))J_{ad}\\
&-(\frac{n-4}{2(n-1)}\Theta_{bd}-\frac12C_{bd}+\frac12(\Rho_S)_{bd}+\frac{n+2}{2(n+1)}\Sigma_{bd}+\frac12(\Rho_A)_{bd}+\frac12(A_{bd}-B_{bd}))J_{ac}\\
&+\frac14(K_{abcd}-K_{bcad}-K_{cabd}+K_{abdc}-K_{bdac}-K_{dabc}).
\end{align*}

Computing all the possible traces and solving the resulting equations gives
\begin{align*}
%K_{abe}{}^e&=0\\
%K_{e}{}^e{}_{ab}&=2\nabla^0_e(H+S)^{e}{}_{ab}+2(H+S)_{feb}(H+S)_{a}{}^{fe},K_{e}{}^e{}_{ba}=2\nabla^0_e(H+S)^{e}{}_{ba}+2(H+S)_{fea}(H+S)_{b}{}^{fe}\\
%K_{ea}{}^e{}_b&=-\nabla^0_e(H+S)^{e}{}_{ab}-3(H+S)_{feb}(H+S)_{a}{}^{fe},K_{eb}{}^e{}_a=-\nabla^0_e(H+S)^{e}{}_{ba}-3(H+S)_{fea}(H+S)_{b}{}^{fe}\\
%K_{eab}{}^e&=\nabla^0_e(H+S)_{ab}{}^e
%+(H+S)_{be}{}^f(H+S)_{af}{}^e,K_{eba}{}^e=-\nabla^0_e(H+S)_{ab}{}^e+(H+S)_{ae}{}^f(H+S)_{bf}{}^e\\
A^0_{ab}&=-\Theta_{ab}+\frac{(nK_e{}^e{}_{ba}+nK_e{}^e{}_{ab}-2K_{eb}{}^e{}_a-2K_{ea}{}^e{}_b-2K_{eba}{}^e-2K_{eab}{}^e)}{4(n-2)(n+2)}\\
B^0_{ab}&=\frac{(2(-K_{eba}{}^e+K_{eab}{}^e)+K_{e}{}^e{}_{ab}-K_{e}{}^e{}_{ba}+2K_{eb}{}^e{}_a-2K_{ea}{}^e{}_b)}{4(n-4)}\\
C^0_{ab}&=\Theta_{ab}+\frac{(-(n+1)(K_{eb}{}^e{}_a+K_{ea}{}^e{}_b)
+K_e{}^e{}_{ba}+K_e{}^e{}_{ab}+K_{eba}{}^e+K_{eab}{}^e)}{2(n-2)(n+2)}\\
E^0_{ab}&=\frac{((n-2)(K_{e}{}^e{}_{ab}-K_{e}{}^e{}_{ba})+2(n-2)(K_{eb}{}^e{}_a-K_{ea}{}^e{}_b)-4K_{eba}{}^e+4K_{eab}{}^e)}{4(n-4)n}\\
\Sigma_{ab}&=\frac{(
(n^2-2n-4)(K_{e}{}^e{}_{ab}-K_{e}{}^e{}_{ba})+2n(K_{eb}{}^e{}_a-K_{ea}{}^e{}_b)+2n(-K_{eba}{}^e+K_{eab}{}^e))}{4(n-4)n(n+2)}\\
(\Rho_A^0)_{ab}&=\frac{n(K_{e}{}^e{}_{ab}-K_{e}{}^e{}_{ba}+2K_{eb}{}^e{}_a-2K_{ea}{}^e{}_b-2K_{eba}{}^e+2K_{eab}{}^e)}{4(n-4)(n+1)}\\
(\Rho_S^0)_{ab}&=\frac{(n+2)}{(n-1)}\Theta_{ab}+\frac{(K_{eba}{}^e+K_{eab}{}^e-K_{eb}{}^e{}_a-K_{ea}{}^e{}_b)}{2(n-2)}
\end{align*}
and the claim of the theorem is proved.
\end{proof}

\section{Tractor--like connections and $\partial^*$--normal Cartan geometries of type $(\fl,P,\Ad)$}\label{ApA}

It is simple observation that the twistor exterior derivative $$d^W: \Omega^0\to \Omega^1(L^i(W))$$
is in fact a linear connection $\nabla^W$, which we can call \emph{a tractor--like connection.} 

\begin{lem}\label{a2}
Let $W$ be a representation of $Sp(n+2,\mathbb{R})$. Then the curvature of $\nabla^W$ is $$d\tau(\tilde R^\Rho_{ab})+2J_{ab}\partial_2$$
and the projection of the curvature of $\nabla^W$ onto $L^2(W)$ is $d\tau(\tilde R^\Rho_{ab})$, where $\tilde R^\Rho_{ab}$ is defined in Lemma \ref{l6.1} and $\partial_2$ is defined in Section \ref{sec5.2}.
\end{lem}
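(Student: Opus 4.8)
The plan is to compute the curvature of $\nabla^W$ directly from the decomposition $d^W = d^\om + \partial_\frak{sp}$, exactly as in the proof of Proposition \ref{prop6.2} but now keeping track of the $\fp_2^*$--components. Recall that $\nabla^W$ is just the restriction of $d^W$ to $\Omega^0 \to \Omega^1(L^1(W))$, so its curvature is $(d^W)^2$ viewed as a two--form. First I would use Lemma \ref{l6.1}: iterating and antisymmetrizing $d^\om$ gives $(d^\om)^2 = Alt_2(d^\om(-d\om))$, and on sections of $\Omega^0$ the relevant component of $d\om$ is $\tilde R^\Rho_{ab}$ (the term $\ad(s_0+s_1+s_2)(t) - \ad(t_0+t_1+t_2)(s)$ does not contribute to the curvature two--form on the underlying bundle). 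As recalled in the proof of Proposition \ref{prop6.2}, the proof of \cite[Lemma 4.2]{CSou} gives $(d^\om \partial_\frak{sp} + \partial_\frak{sp} d^\om) = 0$ on the relevant arguments, so the only remaining contribution to $(d^W)^2$ comes from evaluating the extended fundamental derivative on pairs of arguments where one lies in $\fp_2^*$.

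The key point is that in the definition of $d^W$ on higher forms we set $d^\om(s_{-2})v := 0$ for $s_{-2} \in \fp_2^*$, so the $\fp_2^*$--direction is handled purely by $\partial_\frak{sp}$. When we antisymmetrize $d^W$ over two arguments, one in $\fl_{-1}$ and one in $\fp_2^*$, the cross terms produce a contribution measured by the bracket structure $[\fl_{-1}, \fp_1] \subset \fp_2$ paired against $\fp_2^* = \mathbb{R}[-2]$; identifying $\wedge^2 \fl_{-1}^* \ni J$ as the dual of this bracket (the map $\{.,.\}$ from the definition of $\partial^*$), this contribution is exactly $2J_{ab}\partial_2$, where $\partial_2$ is the map from Section \ref{sec5.2}. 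Thus I expect to obtain
$$
(\nabla^W)^2 = d\tau(\tilde R^\Rho_{ab}) + 2J_{ab}\partial_2 .
$$
The projection statement then follows because $L^2(W) = \wedge^2_0(\fl/\fp)^* \otimes W + \wedge^2(\fl/\fp)^* \otimes Ker(d\tau(\fp_2))$, and the term $2J_{ab}\partial_2$ lands in the complementary trace part: $J_{ab}$ is pure trace (so $J_{ab}\phi \notin \wedge^2_0$), and $\partial_2\phi$ has values in $W[2] \setminus Ker(d\tau(\fp_2))[2]$ by construction, so $r_0 \circ r$ kills it. Hence the projection of the curvature onto $L^2(W)$ is $d\tau(\tilde R^\Rho_{ab})$.

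The main obstacle is bookkeeping rather than conceptual: one must carefully justify that extending $d^\om$ by zero on $\fp_2^*$ is consistent (i.e., that $d^W$ is genuinely well--defined on $\Omega^i(W)$, which is already asserted in the text as a consequence of Lemma \ref{l6.1}) and then correctly identify the coefficient $2$ and the precise form of $\partial_2$ coming from the bracket $\wedge^2 \fp_1^* \otimes \fp_2$ dual to $\{.,.\}$. Since the relevant structure constants are exactly those computed in the formulas for $(\partial^*_0)^2$ and $\partial_0$ in Section \ref{sec5}, I would extract the constant from that computation rather than redo it, and the rest is a routine matching of components against the decomposition defining $L^2(W)$.
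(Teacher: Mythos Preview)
There is a genuine gap. You identify the curvature of $\nabla^W$ with $(d^W)^2$, but these are \emph{not} the same operator. The curvature is $d^{\nabla^W}\circ\nabla^W$, where $d^{\nabla^W}$ is the honest covariant exterior differential of the linear connection $\nabla^W$, and this differs from $d^W$ in two ways that your argument ignores. First, as observed in Section~\ref{sec5.2}, one has the decomposition $d^W = d^{\nabla^W} + \partial_2$; second, the connection $\nabla^0$ underlying $\nabla^W$ has torsion $2(H+S)$, so passing between the twisted differential and the covariant exterior differential introduces the torsion correction $d^W(2(H+S))$. The paper's proof computes precisely this difference, obtaining $d^{\nabla^W}\circ\nabla^W - (d^W)^2 = d^W(2(H+S)) + 2J_{ab}\partial_2$, and then uses Lemma~\ref{l6.1} to rewrite $(d^W)^2 + d^W(2(H+S)) = d\tau(\tilde R^\Rho)$ via $d^\om(2(H+S)-\tilde R^\Rho)+\partial_{\frak{sp}}(2(H+S)) = d\tau(\tilde R^\Rho - 2(H+S)) + d\tau(2(H+S))$.

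Your proposed source for the $2J_{ab}\partial_2$ term --- cross terms in $(d^W)^2$ with one argument in $\fp_2^*$ --- is also off target. The curvature of $\nabla^W$ is a two--form on $M$, i.e.\ both arguments lie in $\fl_{-1}$, so no $\fp_2^*$ entries appear; and Proposition~\ref{prop6.2} already evaluates $(d^W)^2$ on such arguments as $Alt_2(d^\om(-\tilde R^\Rho))$ with no extra $\partial_2$ term. The $2J_{ab}\partial_2$ contribution arises instead from applying $\partial_2 = d^W - d^{\nabla^W}$ at the second step to the $\fp_2$--component that $d^W$ produces at the first step. Without tracking the torsion piece and the $d^W$ versus $d^{\nabla^W}$ distinction, your computation of both summands in the claimed curvature formula does not go through.
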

\begin{proof}
The curvature of $\nabla^W$ is given as $d^{\nabla^W}\circ \nabla^W$. The difference between $d^{\nabla^W}\circ \nabla^W$ and $(d^W)^2$ is $d^W(2(H+S))+2J_{ab}\partial_2$. Now, $(d^W)^2+d^W(2(H+S))=d\tau((\tilde R^\Rho))$ follows from  the Lemma \ref{l6.1}, because $d^\om(2(H+S)-(\tilde R^\Rho))+(\partial_\frak{sp})(2(H+S))=d\tau((\tilde R^\Rho)-2(H+S))+d\tau(2(H+S))$.
\end{proof}

Let us recall that the parabolic geometries are usually normalized by the condition that the curvature of the parabolic geometry is in the kernel of $\partial^*$. For the Cartan geometries modeled on skeletons, the curvature can not be defined, in general. However, we can try to choose a normalization that $\partial^* \tilde R_{ab}^\Rho=0$ holds for the Rho--tensor $\Rho$ of $\si^0$, where
\begin{align*}
\frac12(\partial^*\tilde R^\Rho)_{a}&=
\begin{pmatrix}
0&\Rho_{ca}-(n+1)\Rho_{ac}&2(1-n)\Rho_a-2\nabla^0_i\Rho_{a}{}^i\\
0&0&\Rho^d{}_{a}-(n+1)\Rho_{a}{}^d\\
0&0&0\\
\end{pmatrix}\\
&+\begin{pmatrix}
0&(R^0)_{aic}{}^{i}&0\\
0&-2H_{ac}{}^d-2H_{a}{}^d{}_c&(R^0)_{ai}{}^{di}\\
0&0&0\\
\end{pmatrix}.
\end{align*}

\begin{prop}
There is a Cartan geometry of type $(\fl,P,\Ad)$ describing the ACAF--structure such that $\partial^* \tilde R_{ab}=0$  if and only if $H=0$, i.e., if it is an ACF--structure. The unique Rho--tensors describing this normalization are
\begin{align*}
\Rho_{ac}&=\frac{n+1}{n(n+2)}(R^0)_{aic}{}^{i}+\frac{1}{n(n+2)}(R^0)_{cia}{}^{i}\\
&=\frac{n-1}{n}(\Rho_S)_{ac}+\frac{n+1}{n+2}(\Rho_A)_{ac}-\frac{1}{n+2}\nabla^0_eS_{ac}{}^e-\frac{1}{n}S_{ce}{}^fS_{af}{}^e\\
\Rho_a&=\frac{1}{1-n}\nabla^0_i\Rho_{a}{}^i
\end{align*}
\end{prop}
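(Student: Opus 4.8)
The plan is to first unwind what the normalization condition $\partial^*\tilde R^\Rho_{ab}=0$ actually demands, using the explicit formula for $\frac12(\partial^*\tilde R^\Rho)_a$ recorded just before the statement. Reading off the three independent slots of that matrix, the condition is equivalent to the simultaneous system
\begin{align*}
\Rho_{ca}-(n+1)\Rho_{ac}+(R^0)_{aic}{}^i&=0,\\
-2H_{ac}{}^d-2H_a{}^d{}_c&=0,\\
\Rho^d{}_a-(n+1)\Rho_a{}^d+(R^0)_{ai}{}^{di}&=0,\\
2(1-n)\Rho_a-2\nabla^0_i\Rho_a{}^i&=0,
\end{align*}
where, by Penrose conventions, raising an index of $H$ via $J$ turns the first two symmetries $H_{[bca]}=0$, $H_{bc}{}^b=0$ into whatever is needed — the point being that $-2(H_{ac}{}^d+H_a{}^d{}_c)=0$ is exactly the symmetrized (in the last pair, after raising) part of $H$, and because $H$ is totally trace-free and satisfies $H_{[bca]}=0$, this symmetrization vanishing forces $H\equiv 0$. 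This is the crux of the ``only if'' direction: one shows the $\fp_1$-slot of $\partial^*\tilde R^\Rho$, in its $\frak{csp}$-component, is precisely (a nonzero multiple of) the irreducible projection of $H$ onto the module it lives in, and since $H$ sits in a single irreducible $CSp(n,\mathbb{R})$-component of $\wedge^2(\R^n)^*\otimes \R^n$, that projection being zero means $H=0$. Then by Lemma \ref{lem3.1}(2), $H=0$ is exactly the ACF condition.

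Second, for the converse and for the uniqueness I would solve the remaining (now $H$-free) equations for $\Rho_{ac}$ and $\Rho_a$. The first and third equations above are the same relation written once in each index position; treat $\Rho_{ac}$ as an unknown two-tensor and solve the linear system $\Rho_{ca}-(n+1)\Rho_{ac}=-(R^0)_{aic}{}^i$ together with its transpose $\Rho_{ac}-(n+1)\Rho_{ca}=-(R^0)_{cia}{}^i$. Eliminating $\Rho_{ca}$ gives $((n+1)^2-1)\Rho_{ac}=(n+1)(R^0)_{aic}{}^i+(R^0)_{cia}{}^i$, i.e. $n(n+2)\Rho_{ac}=(n+1)(R^0)_{aic}{}^i+(R^0)_{cia}{}^i$, which is exactly the displayed formula
$$\Rho_{ac}=\frac{n+1}{n(n+2)}(R^0)_{aic}{}^i+\frac{1}{n(n+2)}(R^0)_{cia}{}^i.$$
The fourth equation then reads $(1-n)\Rho_a=\nabla^0_i\Rho_a{}^i$, giving $\Rho_a=\frac{1}{1-n}\nabla^0_i\Rho_a{}^i$ once $\Rho_{ac}$ is known; this is well-defined since $n\geq 6$. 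Uniqueness of the Rho-tensors is immediate because the system is linear with invertible coefficient matrix, and by the transformation formulas for Rho-tensors in the earlier Proposition, prescribing the Rho-tensors of $\si^0$ pins down the Cartan geometry of type $(\fl,P,\Ad)$ describing the given ACAF-structure up to isomorphism.

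Third, I would translate the curvature-trace formulas into the alternate expression for $\Rho_{ac}$ in terms of the projective data. Using Theorem \ref{escur}, substitute $(R^0)_{aic}{}^i=(n-1)(\Rho_S)_{ac}+(n+1)(\Rho_A)_{ac}-\nabla^0_eS_{ac}{}^e-S_{ce}{}^fS_{af}{}^e$ (with $H=0$) and its transpose into the formula for $\Rho_{ac}$; the antisymmetric and symmetric parts recombine — using $(\Rho_S)_{ac}=(\Rho_S)_{ca}$, $(\Rho_A)_{ac}=-(\Rho_A)_{ca}$, and the symmetry $S_{b(ca)}=0$ which controls how $\nabla^0_eS_{ac}{}^e$ and $S_{ce}{}^fS_{af}{}^e$ transpose — to yield
$$\Rho_{ac}=\frac{n-1}{n}(\Rho_S)_{ac}+\frac{n+1}{n+2}(\Rho_A)_{ac}-\frac{1}{n+2}\nabla^0_eS_{ac}{}^e-\frac1n S_{ce}{}^fS_{af}{}^e.$$
Finally, for the ``if'' direction with $H=0$ I must check consistency: that the $\Rho$ so defined, plugged back into the formula for $\tilde R^\Rho_{ab}$ from Lemma \ref{l6.1}, indeed gives $\partial^*\tilde R^\Rho=0$ — but this is automatic since we solved all slot-equations, modulo verifying that the fourth equation $(1-n)\Rho_a=\nabla^0_i\Rho_a{}^i$ is compatible (no hidden over-determination), which follows because the $\fp_1\to\fp_2$ slot of $\partial^*\tilde R^\Rho$ involves only the already-solved $\Rho_{ac}$ via its trace $(R^0)_{ai}{}^{di}$ and the second Bianchi identity does not impose further constraints at this order. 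The main obstacle is the index bookkeeping in the third step: correctly tracking how raising indices with the antisymmetric $J$ (hence the sign flips $\delta^d{}_a=-\delta_a{}^d$ noted in the introduction) interacts with the symmetry types of $H$, $S$, $\Rho_S$, $\Rho_A$, so that the two displayed formulas for $\Rho_{ac}$ genuinely agree; everything else is linear algebra with invertible scalar coefficients.
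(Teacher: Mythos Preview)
Your approach is correct and is precisely the argument the paper intends: the proposition is stated without an explicit proof, immediately after the displayed computation of $\frac12(\partial^*\tilde R^\Rho)_a$, and the reader is meant to set each slot of that matrix to zero and solve, exactly as you do. Your handling of the $\fp_0$--slot is right: the condition $H_{ac}{}^d+H_a{}^d{}_c=0$ lowers to $H_{a(ce)}=0$, which together with $H_{(ab)c}=0$ forces $H$ to be totally antisymmetric, and then $H_{[abc]}=0$ gives $H=0$; the linear algebra for $\Rho_{ac}$ and $\Rho_a$ is straightforward, and your use of Theorem~\ref{escur} (with the total antisymmetry of $S$ giving $\nabla^0_eS_{ca}{}^e=-\nabla^0_eS_{ac}{}^e$ and the symmetry of $S_{ce}{}^fS_{af}{}^e$ in $a,c$) produces the second displayed formula for $\Rho_{ac}$.
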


Let us emphasize that for such $\partial^*$--normal Cartan geometries the curvature $\tilde R^\Rho_{ab}$ can be obtained using the splitting operator from the so--called harmonic curvature sitting in the cohomology $\mathcal{H}^2(L^i(\frak{sp}(n+2,\mathbb{R})))$. The harmonic components of the curvature can be identified with $S$ and the components of $R^0$ given by $U$ and the subspace of the tensors generated by $A,C$ satisfying $A_{ac}=\frac{n}{2}C_{ac}$.

\section{BGG--like sequence on standard tractor bundle}\label{ApC}

We explicitly construct the BGG--like sequence for tractor bundle given by the standard representation $W=T$ of $Sp(n+2,\mathbb{R})$. The representation $T$ decomposes as $CSp(n,\mathbb{R})$--module to $\mathbb{R}[1]\oplus (\mathbb{R}^{n})^*[-1]\oplus \mathbb{R}[-1]$. Therefore we write the sections of $\ba_0\times_{CSp(n,\mathbb{R})} \wedge^k(\fl/\fp)^*\otimes T$ in the splitting provided by the distinguished Weyl structure $\si^0$ of the ACAF--structure $(\ba\to M,\om)$ as the columns
$$\begin{pmatrix}
r_{a_1\dots a_k}\\
s_{a_1\dots a_kd}\\
t_{a_1\dots a_k}\\
\end{pmatrix},$$
where the forms are antisymmetric in the $a_i$ indices and $\mathbb{R}[-1]$ is the top slot (this is opposite convention than in \cite{ES}).

The maps $\partial^*_0$ and $(\partial^*_0)^2$ on $\ba_0\times_{CSp(n,\mathbb{R})} \wedge^k(\fl/\fp)^*\otimes T$ have the following formulas
\begin{align*}
\partial^*_0\begin{pmatrix}
r_{a_1\dots a_k}\\
s_{a_1\dots a_kd}\\
t_{a_1\dots a_k}\\
\end{pmatrix}&=(-1)^kk\begin{pmatrix}
s_{a_1\dots a_{k-1}i}{}^{i}\\
t_{a_1\dots a_{k-1}d}\\
0\\
\end{pmatrix},\\
(\partial^*_0)^2\begin{pmatrix}
r_{a_1\dots a_k}\\
s_{a_1\dots a_kd}\\
t_{a_1\dots a_k}\\
\end{pmatrix}&=-k(k+1)\begin{pmatrix}
t_{a_1\dots a_{k-2}i}{}^{i}\\
0\\
0\\
\end{pmatrix}.
\end{align*}
We see that the subbundles $L^i(T)$ on which $(\partial^*_0)^2=0$ are exactly characterized by the property that the tensors $t_{a_1\dots a_k}$ are totally trace-free. In particular, the map $r_0$ defining $\partial^*$ on $L^i(T)$ is the projection on the totally trace--free part of the last slot.

It is fairly simple to compute the cohomology $Ker(\partial^*)/Im(\partial^*)$ for fixed $n$ just by decomposing the $CSp(n,\mathbb{R})$--representations in $L^i(T)$. For example for $n=6$, we obtain the following diagram characterizing the cohomologies $\mathcal{H}^i(L^i(T)):$
$$
\xymatrix{
&&&101\ar[rd]&&&\\
&&110\ar[ru] \ar[rddd]\ar[rd]&\oplus&110\ar[rd]&&\\
000 \ar[r]&200\ar[ru] \ar[rd]&\oplus &010\ar[ru]\ar[rd]&\oplus&200\ar[r]&000\\
&&100\ar[rd] \ar[ru]&\oplus&100\ar[ru]&&\\
&&&200\ar[ru]\ar[ruuu]&&&\\}
$$
where each triple of the numbers determines a weight a $Sp(6,\mathbb{R})$--submodule that is irreducible component of $\mathcal{H}^i(L^i(T))$ and the arrows characterize the possible (from representation viewpoint) components of the BGG--like operators between $\mathcal{H}^i(L^i(T))$ and $\mathcal{H}^{i+1}(L^i(T))$.

The tractor--like connection has formula
$$
\nabla^T_a\begin{pmatrix}
r\\
s_d\\
t\\
\end{pmatrix}=
\begin{pmatrix}
\nabla^0_ar\\
\nabla^0_as_d+J_{ad}r\\
\nabla^0_at+s_a\\
\end{pmatrix}$$
and the twisted and covariant exterior differentials coincide and have the formula
\begin{align*}
d^T\begin{pmatrix}
r\\
s_d\\
t\\
\end{pmatrix}_{a_1\dots a_k}&=r_0\circ (\sum_i (-1)^{i+1}
\nabla_{a_i}^T\begin{pmatrix}
r_{a_1\dots a_k}\\
s_{a_1\dots a_kd}\\
t_{a_1\dots a_k}\\
\end{pmatrix})\\
&=
\begin{pmatrix}
\sum_i (-1)^{i+1}\nabla^0_{a_i}r_{a_1\dots a_k}\\
\sum_i (-1)^{i+1}\nabla^0_{a_i}s_{a_1\dots a_kd}+\sum_i(-1)^{i+1}J_{a_id}r_{a_1\dots a_k}\\
r_0(\sum_i (-1)^{i+1}\nabla^0_{a_i}t_{a_1\dots a_k}+\sum_i (-1)^{i+1}s_{a_1\dots a_ka_i})\\
\end{pmatrix},
\end{align*}
where $r_0$ on the third slot is projection onto the totally trace--free part.

The composition $\partial^*d^T$ is given by the formula
\begin{align*}
\partial^*d^T\begin{pmatrix}
r\\
s_d\\
t\\
\end{pmatrix}_{a_1\dots a_k}&=
(-1)^{k+1}(k+1)[\begin{pmatrix}
(n-k)(-1)^kr_{a_1\dots a_{k}}\\
r_0(\sum_i (-1)^{i+1}s_{a_1\dots a_kda_i}+(-1)^ks_{a_1\dots a_kd})\\
0\\
\end{pmatrix}\\
&+\begin{pmatrix}
\sum_i (-1)^{i+1}\nabla^0_{a_i}s_{a_1\dots a_kd}{}^{d}+(-1)^k\nabla^0_{d}s_{a_1\dots a_k}{}^{d}\\
r_0(\sum_i (-1)^{i+1}\nabla^0_{a_i}t_{a_1\dots a_kd}+(-1)^k\nabla^0_{d}t_{a_1\dots a_k})\\
0\\
\end{pmatrix},
\end{align*}
where $r_0$ is again projection onto the totally trace--free part. We observe that the eigenvalues of $\partial^*d^T$ on irreducible components of $Im(\partial^*)$ are $-(k+1)(n-k)$ for $0\leq k<n$ on the top slot and $-(k+1)$ for $0\leq k <n/2$ on image of $r_0$ in the middle slot. This allows us to construct all splitting operators $\mathcal{L}_i(d^T)$ and the corresponding BGG--like operators. As an example, we write down only the precise formulas for the first two splitting $\mathcal{L}_i(d^T)$ and BGG--like operators $\mathcal{B}_i$ in the case $n=6$:
\begin{align*}
\mathcal{L}_0(\nabla^T)\begin{pmatrix}
0\\
0\\
t\\
\end{pmatrix}&=
\begin{pmatrix}
0\\
0\\
t\\
\end{pmatrix}-\begin{pmatrix}
0\\
\nabla^0_{d}t\\
0\\
\end{pmatrix}
-\frac16\begin{pmatrix}
\nabla^0_{d}(\nabla^0)^{d}t\\
0\\
0\\
\end{pmatrix}=
\begin{pmatrix}
-\frac16\nabla^0_{i}(\nabla^0)^{i}t\\
-\nabla^0_{d}t\\
t\\
\end{pmatrix},\\
\mathcal{B}_0(t)&=\pi\circ \nabla_{a_1}^T\circ \begin{pmatrix}
-\frac16\nabla^0_{i}(\nabla^0)^{i}t\\
-\nabla^0_{d}t\\
t\\
\end{pmatrix}=
\pi\circ \begin{pmatrix}
*\\
-\nabla^0_{a_1}\nabla^0_{d}t-J_{a_1d}\frac16\nabla^0_{i}(\nabla^0)^{i}t\\\
0\\
\end{pmatrix}\\
&=-\nabla^0_{(a_1}\nabla^0_{d)}t,\\
\mathcal{L}_1(d^T)\begin{pmatrix}
0\\
s_{(a_1d)}\\
0\\
\end{pmatrix}&=
\begin{pmatrix}
0\\
s_{(a_1d)}\\
0\\
\end{pmatrix}-\frac1{10}\begin{pmatrix}
\nabla^0_{d}(s_{a_1}{}^{d}+s^{d}{}_{a_1})\\
0\\
0\\
\end{pmatrix}=\begin{pmatrix}
-\frac1{10}\nabla^0_{d}(s_{a_1}{}^{d}+s^{d}{}_{a_1})\\
s_{(a_1d)}\\
0\\
\end{pmatrix},\\
\mathcal{B}_1(s_{(a_1d)})&=\nabla^0_{a_1}s_{(a_2d)}-\nabla^0_{a_2}s_{(a_1d)}-J_{a_1d}\frac1{10}\nabla^0_{i}(s_{a_2}{}^{i}+s^{i}{}_{a_2})\\
&+J_{a_2d}\frac1{10}\nabla^0_{i}(s_{a_1}{}^{i}+s^{i}{}_{a_1}).
\end{align*}

If we are on a CF--structure, i.e., $H=S=0$, then using the Proposition \ref{prop6.2} and Theorem \ref{escur} we obtain that
\begin{align*}
(d^T)^2\begin{pmatrix}
r\\
s_d\\
t\\
\end{pmatrix}_{a_1\dots a_k}&=
\begin{pmatrix}
0\\
\sum_{i<j} (-1)^{i+j+1}(R^0)_{a_ia_jd}{}^{e}s_{a_1\dots a_ke}\\
0\\
\end{pmatrix},
\end{align*}
which vanishes if and only if $R^0=0$. In such case, the BGG--like sequence is a complex.

\end{document}